\patchcmd\Gread@eps{\@inputcheck#1 }{\@inputcheck"#1"\relax}{}{}
\newcommand{\N}[0]{\mathbb{N}}
\newtheorem{theorem}{Theorem}[section]
\newtheorem{proposition}[theorem]{Proposition}
\newtheorem{corollary}[theorem]{Corollary}
\newtheorem{lemma}[theorem]{Lemma}
\newtheorem{remark}[theorem]{Remark}
\newtheorem{example}[theorem]{Example}
\newtheorem{definition}[theorem]{Definition}
\newcommand{\qed}{\hfill $\square$\medskip}
\begin{document}

\title{Super Domination: Graph Classes, Products and Enumeration}

\author{
Nima Ghanbari$^{1,}$\footnote{Corresponding author}
\and
Gerold J{\"a}ger$^{2}$
\and
Tuomo Lehtil{\"a}$^{3}$
}

\date{\today}

\maketitle

\begin{center}
$^1$Department of Informatics, University of Bergen, P.O. Box 7803, 5020 Bergen, Norway\\
$^2$Department of Mathematics and Mathematical Statistics,  University of Ume\aa, SE-901-87 Ume\aa, Sweden\\
$^3$Department of Mathematics and Statistics, University of Turku, Turku FI-20014, Finland\\
\medskip
{\tt  $^1$Nima.Ghanbari@uib.no ~~ $^2$gerold.jager@umu.se  ~~ $^3$tualeh@utu.fi}
\end{center}

\medskip
%%%%%%%%%%%%%%ABSTRACT%%%%%%%%%%%%%%%%%%%%%%%%%%%%%%%%%%%%%%%%%%%%%%%%%%%%%%%%%%%%

\begin{abstract}
The dominating set problem (DSP) is one of the most famous problems
in combinatorial optimization. It is defined as follows.
For a given simple graph $G=(V,E)$, a dominating set of $G$ is a subset
$S\subseteq V$ such that every vertex in $ V \setminus S$ is adjacent to at 
least one vertex in $S$. Furthermore, the DSP is the problem of finding a minimum-size dominating set 
and the corresponding minimum size, the domination number of $G$.

In this, work we investigate a variant of the DSP, the super dominating set 
problem (SDSP),
which has attracted much attention during the last years.
A dominating set $S$ is called a super dominating set of $G$, if for every 
vertex  $u\in \overline{S}=V \setminus S$, there exists a $v\in S$ such that 
$N(v)\cap \overline{S}=\{u\}$.  Analogously, the SDSP is to find a minimum-size super 
dominating set, and the corresponding minimum size, the super domination 
number of $G$.  
The decision variants of both the DSP and the SDSP have shown to be $\mathcal{NP}$-hard.

In this paper, we present tight bounds for the super domination number of 
the neighbourhood corona product, $r$-gluing, and the Haj\'{o}s sum of  two 
graphs.
Additionally, we present infinite families of graphs attaining our bounds.
Finally, we give the exact number of minimum size super dominating sets for 
some  graph classes. In particular, the number of super dominating sets for 
cycles has quite surprising properties as it varies between values  of the 
set $\{4,n,2n,\frac{5n^2-10n}{8}\}$ based on $n\mod4$.
\end{abstract}

\noindent{\bf Keywords:} domination number, super dominating set, 
neighbourhood corona product, $r$-gluing,
Haj\'{o}s sum.

\medskip
\noindent{\bf AMS Subj.\ Class.:} 05C38, 05C69, 05C76

%%%%%%%%%%%%%%%%%%%%%%%%%%%%%%%%%%%%%%%%%%%%%%%%%%%%%%%%%%%%%%%%%%%%%%%%%%%%%%%%%
%%%%%%%%%%%%%%%%%%%%%%%%%%%%%%%%%%%%%%%%%%%%%%%%%%%%%%%%%%%%%%%%%%%%%%%%%%%%%%%%%
\section{Introduction}
 
\paragraph{Notations and definitions.}

Let $G = (V,E)$ be a  graph with vertex set $V$ and edge set $E$. Throughout 
this paper, we consider finite undirected graphs without loops.
For each vertex $v\in V$, the set $N(v) = N_G(v)=\{u\in V \mid uv \in E\}$ refers to the 
\textit{open neighbourhood} of $v$ in $G$ and the set $N[v]=N_G[v]=N_G(v)\cup \{v\}$ refers to 
the \textit{closed neighbourhood} of $v$ in $G$. If the graph $G$ 
is clear from the context, we will omit the corresponding index~$G$. The \textit{degree of $v$} is the cardinality of $N(v)$. 
Throughout this paper, for a set $ S \subseteq V(G) $, the expression 
$ \overline{S} $ always stands for $ V(G) \setminus S $.

\medskip

A set $S\subseteq V$ is called a  \textit{dominating set} if every vertex in 
$\overline{S}= V \setminus S$ is adjacent to at least one vertex in $S$.
The  \textit{domination number} $\gamma(G)$ is the cardinality of a minimum size
dominating set in $G$. For a detailed treatment of domination theory, we refer the reader 
to~\cite{domination}. 

\medskip

A dominating set $S$ of $G$ is called a \textit{super dominating set} of $G$, if for 
every vertex  $u\in \overline{S}$, there exists a $v\in S$ such that $N(v)\cap 
\overline{S}=\{u\}$. In this case, we say that $v$ super dominates $u$.
The \textit{super domination number} $G$ is the cardinality of a minimum size super 
dominating set of $G$, denoted by $\gamma_{sp}(G)$~\cite{Lemans}. 
We refer the reader to~\cite{Alf,Dett,Nima,Nima1,Kri,Kle,Zhu} 
for more details on super dominating sets of a graph.
Some applications can be found in~\cite{Lemans}.

In this work, we often call vertices  ``black'' (abbreviated by ``B'') 
if they are contained in a given super dominating set and 
``white'' (abbreviated ``W'') if they are \textit{not} contained in it.

\medskip

\paragraph{Graph classes.}

We will consider the following well-known graph classes: \textit{path graph}, 
\textit{cycle graph}, \textit{star graph}, \textit{complete graph}, \textit{complete bipartite graph}. A \textit{friendship graph} $F_n$ is a collection
of $n$ triangles, where all triangles have one vertex, the central vertex, in common
(see Figure~\ref{friend}).

\begin{figure}
\begin{center}
\psscalebox{0.6 0.6}
{
\begin{pspicture}(0,-7.215)(20.277115,-1.245)
\psline[linecolor=black, linewidth=0.04](1.7971154,-1.815)(1.7971154,-1.815)
\psdots[linecolor=black, dotsize=0.4](8.997115,-1.815)
\psdots[linecolor=black, dotsize=0.4](10.5971155,-1.815)
\psdots[linecolor=black, dotsize=0.4](9.797115,-4.215)
\psdots[linecolor=black, dotsize=0.4](8.997115,-6.615)
\psdots[linecolor=black, dotsize=0.4](10.5971155,-6.615)
\psline[linecolor=black, linewidth=0.08](8.997115,-1.815)(10.5971155,-1.815)(8.997115,-6.615)(10.5971155,-6.615)(8.997115,-1.815)(8.997115,-1.815)
\psdots[linecolor=black, dotsize=0.4](12.197115,-3.415)
\psdots[linecolor=black, dotsize=0.4](12.197115,-5.015)
\psdots[linecolor=black, dotsize=0.4](7.397115,-3.415)
\psdots[linecolor=black, dotsize=0.4](7.397115,-5.015)
\psline[linecolor=black, linewidth=0.08](12.197115,-5.015)(7.397115,-3.415)(7.397115,-5.015)(12.197115,-3.415)(12.197115,-5.015)(12.197115,-5.015)
\psdots[linecolor=black, dotsize=0.4](0.1971154,-3.415)
\psdots[linecolor=black, dotsize=0.4](0.1971154,-5.015)
\psdots[linecolor=black, dotsize=0.4](2.5971155,-4.215)
\psline[linecolor=black, linewidth=0.08](2.5971155,-4.215)(0.1971154,-3.415)(0.1971154,-5.015)(2.5971155,-4.215)(2.5971155,-4.215)
\psdots[linecolor=black, dotsize=0.4](3.3971155,-1.815)
\psdots[linecolor=black, dotsize=0.4](4.5971155,-2.615)
\psdots[linecolor=black, dotsize=0.4](3.3971155,-6.615)
\psdots[linecolor=black, dotsize=0.4](4.5971155,-5.815)
\psline[linecolor=black, linewidth=0.08](2.5971155,-4.215)(4.5971155,-5.815)(3.3971155,-6.615)(3.3971155,-6.615)
\psline[linecolor=black, linewidth=0.08](2.5971155,-4.215)(3.3971155,-6.615)(3.3971155,-6.615)
\psline[linecolor=black, linewidth=0.08](2.5971155,-4.215)(3.3971155,-1.815)(4.5971155,-2.615)(2.5971155,-4.215)(2.5971155,-4.215)
\psdots[linecolor=black, dotsize=0.4](15.397116,-2.615)
\psdots[linecolor=black, dotsize=0.4](16.597115,-1.815)
\psdots[linecolor=black, dotsize=0.4](17.397116,-4.215)
\psdots[linecolor=black, dotsize=0.4](15.397116,-5.815)
\psdots[linecolor=black, dotsize=0.4](16.597115,-6.615)
\psdots[linecolor=black, dotsize=0.4](19.397116,-5.815)
\psdots[linecolor=black, dotsize=0.4](18.197115,-6.615)
\psdots[linecolor=black, dotsize=0.4](14.997115,-3.415)
\psdots[linecolor=black, dotsize=0.4](14.997115,-5.015)
\psdots[linecolor=black, dotsize=0.4](18.197115,-1.815)
\psdots[linecolor=black, dotsize=0.4](19.397116,-2.615)
\psdots[linecolor=black, dotsize=0.1](18.997116,-3.815)
\psdots[linecolor=black, dotsize=0.1](18.997116,-4.215)
\psdots[linecolor=black, dotsize=0.1](18.997116,-4.615)
\rput[bl](17.697115,-4.295){$x$}
\rput[bl](16.137115,-1.595){$u_1$}
\rput[bl](14.857116,-2.375){$u_2$}
\psline[linecolor=black, linewidth=0.08](17.397116,-4.215)(19.397116,-2.615)(18.197115,-1.815)(17.397116,-4.215)(16.597115,-1.815)(15.397116,-2.615)(17.397116,-4.215)(14.997115,-3.415)(14.997115,-5.015)(17.397116,-4.215)(15.397116,-5.815)(16.597115,-6.615)(17.397116,-4.215)(18.197115,-6.615)(19.397116,-5.815)(17.397116,-4.215)(17.397116,-4.215)
\rput[bl](14.277116,-3.495){$u_3$}
\rput[bl](14.297115,-5.095){$u_4$}
\rput[bl](14.817116,-6.195){$u_5$}
\rput[bl](16.217115,-7.175){$u_6$}
\rput[bl](18.037115,-7.215){$u_7$}
\rput[bl](19.517115,-6.295){$u_8$}
\rput[bl](18.097115,-1.495){$u_{2n}$}
\rput[bl](19.337116,-2.315){$u_{2n-1}$}
\end{pspicture}
}
\end{center}
\caption{Friendship graphs $F_3$, $F_4$ and $F_n$, respectively.}\label{friend}
\end{figure}
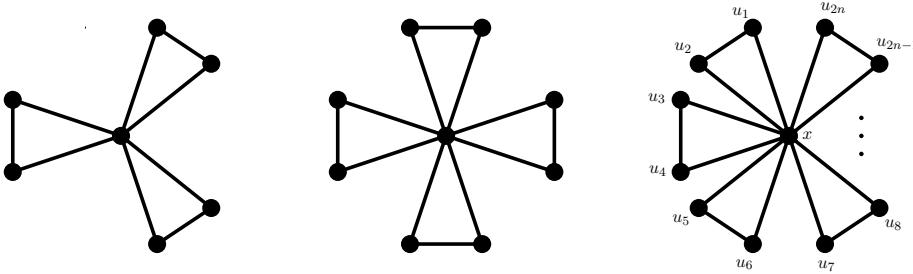

%\begin{figure}
%\centering
%\begin{subfigure}{0.32\linewidth}
%\includegraphics[width=0.9\textwidth]{f3.png}
%\caption{$F_3$.}
%\end{subfigure}
%\hfill
%\begin{subfigure}{0.32\linewidth}
%\includegraphics[width=0.9\textwidth]{f4.png}
%\caption{$F_4$.}
%\end{subfigure}
%\hfill
%\begin{subfigure}{0.32\linewidth}
%\includegraphics[width=0.9\textwidth]{fn.png}
%\caption{$F_n$}
%\end{subfigure}
%\caption{Examples for friendship graphs.}
%\label{friend}
%\end{figure}

Given two simple graphs  $G_1$ and $G_2$, the \textit{corona product} of 
$G_1$ and $G_2$, denoted by $G_1\circ G_2$ is the graph arising from the
disjoint union of $G_1$ with $| V(G_1) |$ copies of $G_2$, by adding edges between
the $i$-th vertex of $G_1$ and all vertices of the $i$-th copy of $G_2$ \cite{Har}.
The \textit{neighbourhood corona product} of 
$G_1$ and $G_2$, denoted by $G_1 \star G_2$, is the graph obtained by
taking one copy of $G_1$ and $|V(G_1)|$ copies of $G_2$ and  
joining the neighbours of the $i$-th vertex of $G_1$ to every vertex in the $i$-th 
copy of $G_2$~\cite{Gopalapillai}. 
Thus, this graph has $ |V(G_1)| \cdot (|V(G_2)| +1 ) $ vertices.
Figure~\ref{C4starC3} shows $C_4\star K_3$, where $C_n$ is  the cycle of order~$n$ 
and $K_n$ is the complete graph of order $n$. For more results on the neighbourhood corona product of two graphs, we refer the reader to~\cite{Barik,Nima2,linear}.

\begin{figure}
\begin{center}
\psscalebox{0.4 0.4}
{
\begin{pspicture}(0,-8.0)(9.594231,1.994231)
\psdots[linecolor=black, dotsize=0.4](2.9971154,-1.0028845)
\psdots[linecolor=black, dotsize=0.4](6.9971156,-1.0028845)
\psdots[linecolor=black, dotsize=0.4](2.9971154,-5.0028844)
\psdots[linecolor=black, dotsize=0.4](6.9971156,-5.0028844)
\psdots[linecolor=black, dotsize=0.4](7.7971153,1.7971154)
\psdots[linecolor=black, dotsize=0.4](9.397116,0.19711548)
\psdots[linecolor=black, dotsize=0.4](9.397116,1.7971154)
\psline[linecolor=black, linewidth=0.08](2.9971154,-1.0028845)(6.9971156,-1.0028845)(6.9971156,-5.0028844)(2.9971154,-5.0028844)(2.9971154,-1.0028845)(2.9971154,-1.0028845)
\psline[linecolor=black, linewidth=0.08](7.7971153,1.7971154)(9.397116,1.7971154)(9.397116,0.19711548)(7.7971153,1.7971154)(7.7971153,1.7971154)
\psline[linecolor=black, linewidth=0.04](2.9971154,-1.0028845)(9.397116,1.7971154)(6.9971156,-5.0028844)(6.9971156,-5.0028844)
\psline[linecolor=black, linewidth=0.04](2.9971154,-1.0028845)(7.7971153,1.7971154)(6.9971156,-5.0028844)(6.9971156,-5.0028844)
\psline[linecolor=black, linewidth=0.04](2.9971154,-1.0028845)(9.397116,0.19711548)(6.9971156,-5.0028844)(6.9971156,-5.0028844)
\psdots[linecolor=black, dotsize=0.4](9.397116,-6.2028847)
\psdots[linecolor=black, dotsize=0.4](9.397116,-7.8028846)
\psdots[linecolor=black, dotsize=0.4](7.7971153,-7.8028846)
\psdots[linecolor=black, dotsize=0.4](0.59711546,-6.2028847)
\psdots[linecolor=black, dotsize=0.4](0.59711546,-7.8028846)
\psdots[linecolor=black, dotsize=0.4](2.1971154,-7.8028846)
\psdots[linecolor=black, dotsize=0.4](0.19711548,0.19711548)
\psdots[linecolor=black, dotsize=0.4](0.19711548,1.7971154)
\psdots[linecolor=black, dotsize=0.4](1.7971154,1.7971154)
\psline[linecolor=black, linewidth=0.08](6.9971156,-1.0028845)(6.9971156,-1.0028845)
\psline[linecolor=black, linewidth=0.04](6.9971156,-1.0028845)(1.7971154,1.7971154)(2.9971154,-5.0028844)(0.19711548,0.19711548)(6.9971156,-1.0028845)(0.19711548,1.7971154)(2.9971154,-5.0028844)(2.9971154,-5.0028844)
\psline[linecolor=black, linewidth=0.04](2.9971154,-1.0028845)(0.59711546,-6.2028847)(6.9971156,-5.0028844)(2.1971154,-7.8028846)(2.9971154,-1.0028845)(0.59711546,-7.8028846)(6.9971156,-5.0028844)(2.9971154,-5.0028844)(7.7971153,-7.8028846)(6.9971156,-1.0028845)(9.397116,-6.2028847)(2.9971154,-5.0028844)(9.397116,-7.8028846)(6.9971156,-1.0028845)(6.9971156,-1.0028845)
\psline[linecolor=black, linewidth=0.08](0.19711548,1.7971154)(1.7971154,1.7971154)(0.19711548,0.19711548)(0.19711548,1.7971154)(0.19711548,1.7971154)
\psline[linecolor=black, linewidth=0.08](0.59711546,-6.2028847)(2.1971154,-7.8028846)(0.59711546,-7.8028846)(0.59711546,-6.2028847)(0.59711546,-6.2028847)
\psline[linecolor=black, linewidth=0.08](9.397116,-6.2028847)(9.397116,-7.8028846)(7.7971153,-7.8028846)(9.397116,-6.2028847)(9.397116,-6.2028847)
\end{pspicture}
}
\end{center}
\caption{$C_4 \star K_3$.} \label{C4starC3}
\end{figure}
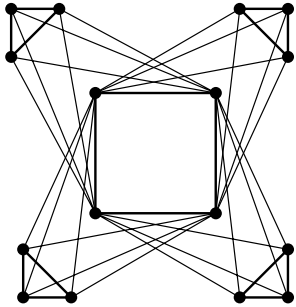

%\begin{figure}
%\centering
%\includegraphics[width=0.7\textwidth]{c4stark3.png}
%\caption{$C_4 \star K_3$.} 
%\label{C4starC3}
%\end{figure}

%If $r\in \N \cup \{0\}$ and $r\leq min 
%\{\omega(G_1),\omega(G_2)\}$, where $\omega(G)$ is the clique number of $G$, 
%then we say that $G\in G_1\cup_{K_r}G_2$ if $G$ can be formed from $G_1$ and 
%$G_2$ by identifying an $r$-clique from both graphs. See~\cite{Nima2} for some 
%previous results on $r$-gluing.  

Let $G_1$ and $G_2$ be two graphs and $r\in \N_0 := \N\cup \{0\}$ with $r\leq 
\min \{\omega(G_1),\omega(G_2)\}$, where $\omega(G)$ is the clique number of 
$G$.  Choose a clique $K_r$ from each $G_i$, $i=1,2$, and form a new graph $G$ 
from the union of $G_1$ and $G_2$ by identifying the two chosen $r$-cliques in 
an arbitrary manner. The graph $G$ is called \emph{$r$-gluing} of $G_1$ and $G_2$ and 
denoted by $G_1\cup_{K_r}G_2$. If $r=0$, then $G_1\cup_{K_0}G_2$ is just its
disjoint union. $G_1\cup_{K_i}G_2$ for $i=1,2$, is called vertex and edge 
gluing, respectively. Notice that there are sometimes several ways to $r$-glue 
two graphs together (see Figure \ref{GcupHmorethanone}). 
We refer the reader for some results on the $r$-gluing of two 
graphs to~\cite{Nima2}.

\begin{figure}
\begin{center}
\psscalebox{0.6 0.6}
{
\begin{pspicture}(0,-4.9014425)(18.394232,2.895673)
\psdots[linecolor=black, dotsize=0.4](0.19711533,-0.10144226)
\psdots[linecolor=black, dotsize=0.4](1.7971153,-0.10144226)
\psdots[linecolor=black, dotsize=0.4](1.7971153,-1.7014422)
\psdots[linecolor=black, dotsize=0.4](0.19711533,-1.7014422)
\psline[linecolor=black, linewidth=0.08](0.19711533,-0.10144226)(1.7971153,-0.10144226)(1.7971153,-1.7014422)(0.19711533,-1.7014422)(0.19711533,-0.10144226)(0.19711533,-0.10144226)
\psline[linecolor=black, linewidth=0.08](1.7971153,-0.10144226)(0.19711533,-1.7014422)(0.19711533,-1.7014422)
\psline[linecolor=black, linewidth=0.08](0.19711533,-0.10144226)(1.7971153,-1.7014422)(1.7971153,-1.7014422)
\psdots[linecolor=black, dotsize=0.4](1.7971153,1.4985577)
\psdots[linecolor=black, dotsize=0.4](0.19711533,1.4985577)
\psdots[linecolor=black, dotsize=0.4](1.7971153,2.6985579)
\psdots[linecolor=black, dotsize=0.4](0.19711533,2.6985579)
\psdots[linecolor=black, dotsize=0.4](1.7971153,-3.3014421)
\psdots[linecolor=black, dotsize=0.4](0.19711533,-3.3014421)
\psline[linecolor=black, linewidth=0.08](1.7971153,-0.10144226)(1.7971153,2.6985579)(0.19711533,2.6985579)(0.19711533,-0.10144226)(0.19711533,-0.10144226)
\psline[linecolor=black, linewidth=0.08](1.7971153,-1.7014422)(1.7971153,-3.3014421)(0.19711533,-3.3014421)(0.19711533,-1.7014422)(0.19711533,-1.7014422)
\psdots[linecolor=black, dotsize=0.4](4.1971154,-1.7014422)
\psdots[linecolor=black, dotsize=0.4](5.7971153,-1.7014422)
\psdots[linecolor=black, dotsize=0.4](4.997115,-0.10144226)
\psline[linecolor=black, linewidth=0.08](4.1971154,-1.7014422)(4.997115,-0.10144226)(5.7971153,-1.7014422)(4.1971154,-1.7014422)(4.1971154,-1.7014422)
\psline[linecolor=black, linewidth=0.08](4.997115,-0.10144226)(4.1971154,1.4985577)(4.1971154,1.4985577)
\psline[linecolor=black, linewidth=0.08](4.997115,-0.10144226)(5.7971153,1.4985577)(5.7971153,1.4985577)
\psdots[linecolor=black, dotsize=0.4](4.1971154,1.4985577)
\psdots[linecolor=black, dotsize=0.4](5.7971153,1.4985577)
\psdots[linecolor=black, dotsize=0.4](10.197115,-0.10144226)
\psdots[linecolor=black, dotsize=0.4](11.797115,-0.10144226)
\psdots[linecolor=black, dotsize=0.4](11.797115,-1.7014422)
\psdots[linecolor=black, dotsize=0.4](10.197115,-1.7014422)
\psline[linecolor=black, linewidth=0.08](10.197115,-0.10144226)(11.797115,-0.10144226)(11.797115,-1.7014422)(10.197115,-1.7014422)(10.197115,-0.10144226)(10.197115,-0.10144226)
\psline[linecolor=black, linewidth=0.08](11.797115,-0.10144226)(10.197115,-1.7014422)(10.197115,-1.7014422)
\psline[linecolor=black, linewidth=0.08](10.197115,-0.10144226)(11.797115,-1.7014422)(11.797115,-1.7014422)
\psdots[linecolor=black, dotsize=0.4](11.797115,1.4985577)
\psdots[linecolor=black, dotsize=0.4](10.197115,1.4985577)
\psdots[linecolor=black, dotsize=0.4](11.797115,2.6985579)
\psdots[linecolor=black, dotsize=0.4](10.197115,2.6985579)
\psdots[linecolor=black, dotsize=0.4](11.797115,-3.3014421)
\psdots[linecolor=black, dotsize=0.4](10.197115,-3.3014421)
\psline[linecolor=black, linewidth=0.08](11.797115,-0.10144226)(11.797115,2.6985579)(10.197115,2.6985579)(10.197115,-0.10144226)(10.197115,-0.10144226)
\psline[linecolor=black, linewidth=0.08](11.797115,-1.7014422)(11.797115,-3.3014421)(10.197115,-3.3014421)(10.197115,-1.7014422)(10.197115,-1.7014422)
\psdots[linecolor=black, dotsize=0.4](12.997115,0.69855773)
\psdots[linecolor=black, dotsize=0.4](12.997115,-0.9014423)
\psline[linecolor=black, linewidth=0.08](11.797115,-0.10144226)(12.997115,0.69855773)(12.997115,0.69855773)
\psline[linecolor=black, linewidth=0.08](11.797115,-0.10144226)(12.997115,-0.9014423)(12.997115,-0.9014423)
\psdots[linecolor=black, dotsize=0.4](15.397116,-0.10144226)
\psdots[linecolor=black, dotsize=0.4](16.997116,-0.10144226)
\psdots[linecolor=black, dotsize=0.4](16.997116,-1.7014422)
\psdots[linecolor=black, dotsize=0.4](15.397116,-1.7014422)
\psline[linecolor=black, linewidth=0.08](15.397116,-0.10144226)(16.997116,-0.10144226)(16.997116,-1.7014422)(15.397116,-1.7014422)(15.397116,-0.10144226)(15.397116,-0.10144226)
\psline[linecolor=black, linewidth=0.08](16.997116,-0.10144226)(15.397116,-1.7014422)(15.397116,-1.7014422)
\psline[linecolor=black, linewidth=0.08](15.397116,-0.10144226)(16.997116,-1.7014422)(16.997116,-1.7014422)
\psdots[linecolor=black, dotsize=0.4](16.997116,1.4985577)
\psdots[linecolor=black, dotsize=0.4](15.397116,1.4985577)
\psdots[linecolor=black, dotsize=0.4](16.997116,2.6985579)
\psdots[linecolor=black, dotsize=0.4](15.397116,2.6985579)
\psdots[linecolor=black, dotsize=0.4](16.997116,-3.3014421)
\psdots[linecolor=black, dotsize=0.4](15.397116,-3.3014421)
\psline[linecolor=black, linewidth=0.08](16.997116,-0.10144226)(16.997116,2.6985579)(15.397116,2.6985579)(15.397116,-0.10144226)(15.397116,-0.10144226)
\psline[linecolor=black, linewidth=0.08](16.997116,-1.7014422)(16.997116,-3.3014421)(15.397116,-3.3014421)(15.397116,-1.7014422)(15.397116,-1.7014422)
\psdots[linecolor=black, dotsize=0.4](18.197115,-0.9014423)
\psdots[linecolor=black, dotsize=0.4](18.197115,-2.5014422)
\psline[linecolor=black, linewidth=0.08](16.997116,-1.7014422)(18.197115,-0.9014423)(18.197115,-0.9014423)
\psline[linecolor=black, linewidth=0.08](16.997116,-1.7014422)(18.197115,-2.5014422)(18.197115,-2.5014422)
\rput[bl](0.7971153,-4.801442){$G$}
\rput[bl](4.8171153,-4.821442){$H$}
\rput[bl](10.317116,-4.881442){$G\cup_{K_3} H$}
\rput[bl](15.457115,-4.901442){$G\cup_{K_3} H$}
\end{pspicture}
}
\end{center}
\caption{Graphs $G$, $H$ and all non-isomorphic graphs $G\cup_{K_3}H$, respectively.} \label{GcupHmorethanone}
\end{figure}
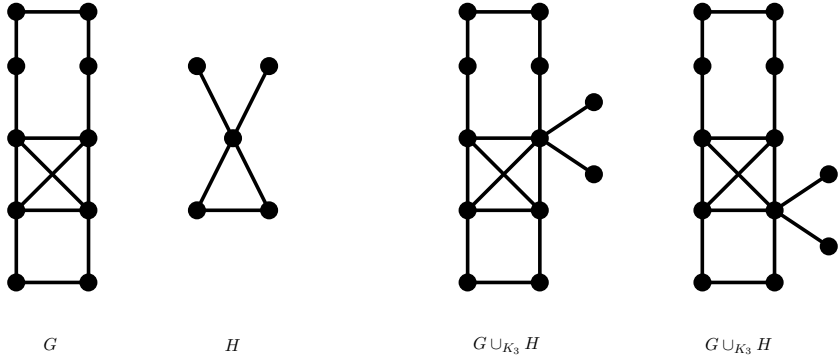

%\begin{figure}
%\centering
%\begin{subfigure}{0.24\linewidth}
%\includegraphics[width = 0.9\textwidth]{gunion.png}
%\caption{$G$.}
%\end{subfigure}
%\hfill
%\begin{subfigure}{0.24\linewidth}
%\includegraphics[width = 0.9\textwidth]{hunion.png}
%\caption{$H$.}
%\end{subfigure}
%\hfill
%\begin{subfigure}{0.48\linewidth}
%\includegraphics[width = 0.9\textwidth]{ghunion.png}
%\caption{Two versions of $ G \cup_{K_3} H $.}
%\end{subfigure}
%\caption{Example, where $r$-gluing of two graphs is not unique.}
%\label{GcupHmorethanone}
%\end{figure}

Given graphs $G_1 = (V_1,E_1)$ and $G_2 = (V_2, E_2)$
with disjoint vertex sets, an edge $x_1y_1\in E_1$, and an edge $x_2y_2\in E_2$, the
\emph{Haj\'{o}s sum} $G_3 = G_1(x_1y_1) +_H G_2(x_2y_2)$ is the graph obtained as follows:
begin with $G_3 = (V_1 \cup V_2, E_1 \cup E_2)$; then in $G_3$ delete the edges $x_1y_1$ and
$x_2y_2$, identify the vertices $x_1$ and $x_2$ as $v_H(x_1x_2)$, and add the edge $y_1y_2$~\cite{HAJOSSUM}.
%Now define $G_3$ as the current $G_3'$~\cite{HAJOSSUM}. 
Figure~\ref{HaJ-K4C4} shows the Haj\'{o}s sum  of $K_4$ and $C_4$ 
with respect to $x_1y_1$ and $x_2y_2$. 

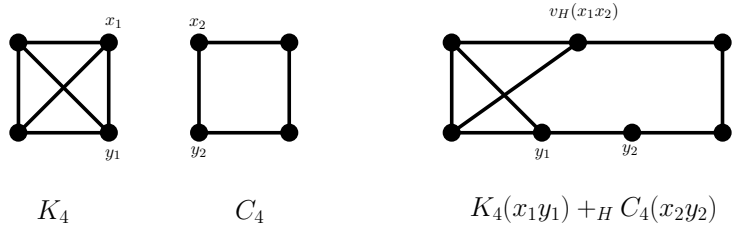
\begin{figure}
\begin{center}
\psscalebox{0.6 0.6}
{
\begin{pspicture}(0,-7.555)(15.994231,-2.665)
\rput[bl](2.1171153,-3.235){$x_1$}
\rput[bl](3.9771154,-3.255){$x_2$}
\rput[bl](2.1171153,-6.155){$y_1$}
\rput[bl](4.017115,-6.095){$y_2$}
\rput[bl](11.6371155,-6.135){$y_1$}
\rput[bl](13.577115,-6.055){$y_2$}
\rput[bl](11.957115,-3.055){$v_H(x_1x_2)$}
\psline[linecolor=black, linewidth=0.08](2.1971154,-3.555)(0.19711533,-3.555)(0.19711533,-5.555)(2.1971154,-5.555)(2.1971154,-3.555)(0.19711533,-5.555)(0.19711533,-5.555)
\psline[linecolor=black, linewidth=0.08](0.19711533,-3.555)(2.1971154,-5.555)(2.1971154,-5.555)
\psdots[linecolor=black, dotsize=0.4](0.19711533,-3.555)
\psdots[linecolor=black, dotsize=0.4](2.1971154,-3.555)
\psdots[linecolor=black, dotsize=0.4](0.19711533,-5.555)
\psdots[linecolor=black, dotsize=0.4](2.1971154,-5.555)
\psline[linecolor=black, linewidth=0.08](4.1971154,-5.555)(4.1971154,-3.555)(6.1971154,-3.555)(6.1971154,-5.555)(4.1971154,-5.555)(4.1971154,-5.555)
\psdots[linecolor=black, dotsize=0.4](4.1971154,-3.555)
\psdots[linecolor=black, dotsize=0.4](6.1971154,-3.555)
\psdots[linecolor=black, dotsize=0.4](6.1971154,-5.555)
\psdots[linecolor=black, dotsize=0.4](4.1971154,-5.555)
\psline[linecolor=black, linewidth=0.08](9.797115,-3.555)(11.797115,-5.555)(11.797115,-5.555)
\psdots[linecolor=black, dotsize=0.4](9.797115,-3.555)
\psdots[linecolor=black, dotsize=0.4](12.5971155,-3.555)
\psdots[linecolor=black, dotsize=0.4](9.797115,-5.555)
\psdots[linecolor=black, dotsize=0.4](11.797115,-5.555)
\psdots[linecolor=black, dotsize=0.4](15.797115,-3.555)
\psdots[linecolor=black, dotsize=0.4](15.797115,-5.555)
\psdots[linecolor=black, dotsize=0.4](13.797115,-5.555)
\psline[linecolor=black, linewidth=0.08](11.797115,-5.555)(13.797115,-5.555)(15.797115,-5.555)(15.797115,-3.555)(15.797115,-3.555)
\psline[linecolor=black, linewidth=0.08](15.797115,-3.555)(9.797115,-3.555)(9.797115,-5.555)(11.797115,-5.555)(11.797115,-5.555)
\psline[linecolor=black, linewidth=0.08](9.797115,-5.555)(12.5971155,-3.555)(12.5971155,-3.555)
\rput[bl](0.59711534,-7.555){\LARGE{$K_4$}}
\rput[bl](4.997115,-7.555){\LARGE{$C_4$}}
\rput[bl](10.197115,-7.555){\LARGE{$K_4(x_1y_1)+_H C_4(x_2y_2)$}}
\end{pspicture}
}
\end{center}
\caption{Haj\'{o}s construction of $K_4$ and $C_4$.} \label{HaJ-K4C4}
\end{figure}

%\begin{figure}[H]
%\centering
%%\begin{subfigure}{0.32\linewidth}
%%\includegraphics[width=0.9\textwidth]{k4hajos.png}
%%\subcaptionbox[short]{$K_4$.}
%%\end{subfigure}
%%\hfill
%\begin{subfigure}{0.32\linewidth}
%\includegraphics[width=0.9\textwidth]{c4hajos.png}
%\subcaptionbox{$C_4$.}
%\end{subfigure}
%\hfill
%\begin{subfigure}{0.32\linewidth}
%\includegraphics[width=0.9\textwidth]{k4c4hajos.png}
%\subcaptionbox[short]{$ K_4(x_1 y_1) +_H C_4(x_2y_2)$.}
%\end{subfigure}
%\caption{Haj\'{o}s construction of $K_4$ and $C_4$.} 
%\label{HaJ-K4C4}
%\end{figure}

\medskip
\paragraph{Previous work.} Super dominating sets have been studied in multiple 
papers since the inception of the concept in 2015~\cite{Lemans}. 
In particular, the following 
tight lower and upper bounds are known for the super domination number.
	\begin{theorem}\cite{Lemans}\label{thm-1}
 Let $G$ be a graph of order $n$ without isolated vertices. Then,
 $$1\leq \gamma(G) \leq \frac{n}{2} \leq \gamma_{sp}(G) \leq n-1.$$
	\end{theorem}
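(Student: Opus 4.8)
The plan is to establish the four inequalities separately, as each concerns a different quantity and uses a different idea. The leftmost bound $1 \le \gamma(G)$ is immediate: since $G$ has at least one vertex, the empty set cannot dominate it, so every dominating set is nonempty.

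For the bound $\gamma(G) \le n/2$, I would invoke the classical complementation argument for minimal dominating sets. The key step is to show that whenever $D$ is a minimal dominating set of a graph without isolated vertices, its complement $\overline{D}$ is also a dominating set. To see this, fix $v \in D$; by minimality of $D$, the set $D \setminus \{v\}$ fails to dominate some vertex $w$, so $w$ is dominated only by $v$ among the vertices of $D$. If $w = v$, then $v$ has no neighbour in $D$, and since $v$ is not isolated it has a neighbour in $\overline{D}$; if $w \neq v$, then $w \in \overline{D}$ and $w$ is adjacent to $v$. In either case $v$ has a neighbour in $\overline{D}$, so $\overline{D}$ dominates every vertex of $D$ and hence all of $V$. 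Taking $D$ to be a minimum dominating set, both $D$ and $\overline{D}$ are dominating, so $\gamma(G) \le \min\{|D|, |\overline{D}|\} \le n/2$.

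The middle inequality $n/2 \le \gamma_{sp}(G)$ rests on an injection argument, which I expect to be the conceptual heart of the statement. Let $S$ be any super dominating set. By definition, for each $u \in \overline{S}$ there is a witness $v \in S$ with $N(v) \cap \overline{S} = \{u\}$; choosing one such witness for each $u$ defines a map $f \colon \overline{S} \to S$. This map is injective, because if $f(u_1) = f(u_2) = v$ then $\{u_1\} = N(v) \cap \overline{S} = \{u_2\}$, forcing $u_1 = u_2$. Consequently $|\overline{S}| \le |S|$, and since $|S| + |\overline{S}| = n$ this gives $|S| \ge n/2$; taking $S$ of minimum size yields $\gamma_{sp}(G) \ge n/2$.

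Finally, for the rightmost bound $\gamma_{sp}(G) \le n - 1$, I would exhibit an explicit super dominating set of size $n-1$. Pick any vertex $u$ and set $S = V \setminus \{u\}$, so that $\overline{S} = \{u\}$. Since $G$ has no isolated vertices, $u$ has a neighbour $v \in S$, and then $N(v) \cap \overline{S} = N(v) \cap \{u\} = \{u\}$, so $v$ super dominates $u$; the same adjacency shows $S$ dominates $u$, hence $S$ is a super dominating set and $\gamma_{sp}(G) \le n-1$. The only recurring subtlety is the role of the no-isolated-vertices hypothesis, which is precisely what is needed both in the complementation lemma and in this final construction.
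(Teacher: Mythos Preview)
The paper does not prove this theorem itself; it is quoted from \cite{Lemans} as a known result, so there is no proof in the paper to compare against. Your argument is correct in all four parts and follows the standard lines: Ore's complementation lemma for $\gamma(G)\le n/2$, the injection $\overline{S}\to S$ via super-domination witnesses for $n/2\le\gamma_{sp}(G)$, and the explicit set $V\setminus\{u\}$ for the upper bound, with the no-isolated-vertices hypothesis invoked exactly where it is needed.
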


Besides general bounds, the super domination number is known exactly for 
many graph classes, some stated in  the following theorem.

\begin{theorem}\cite{Lemans}\label{thm-2}
Let $n \in \N$.
\begin{itemize}
\item[(a)]
For the path graph $P_n$ it holds that $\gamma_{sp}(P_n)=\lceil \frac{n}{2} \rceil$.
\item[(b)]
For the cycle graph $C_n$ it holds that
\begin{displaymath}
\gamma_{sp}(C_n)= \left\{ \begin{array}{ll}
\lceil\frac{n+1}{2}\rceil & \textrm{if $n \equiv 2 \pmod 4$, }\\
\\
\lceil\frac{n}{2}\rceil & \textrm{otherwise.}
\end{array} \right.
\end{displaymath}
\item[(c)]
For the complete graph $K_n$, where $n\geq 2$, it holds that  $\gamma_{sp}(K_n)=n-1$.
\item[(d)]
For the complete bipartite graph $K_{n,m}$, where $\min\{n,m\}\geq 2$,
it holds that
$\gamma_{sp}(K_{n,m})=n+m-2$. 
\item[(e)]
For the star graph $K_{1,n}$ it holds that $\gamma_{sp}(K_{1,n})=n$.
\end{itemize}
\end{theorem}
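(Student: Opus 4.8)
The plan is to derive all five parts from a single structural observation and then treat each class with a matching lower bound together with an explicit colouring. The key observation is that in any super dominating set $S$ the witness assignment $u\mapsto v$ (where $N(v)\cap\overline S=\{u\}$) is injective: a vertex $v$ can witness at most one white vertex, since by definition its white neighbourhood must be the singleton $\{u\}$. Hence $|\overline S|\le|S|$, which already yields $\gamma_{sp}(G)\ge\lceil n/2\rceil$ for any graph on $n$ vertices without isolated vertices; for $P_n$ and $C_n$ this is exactly the lower bound of Theorem~\ref{thm-1} sharpened by integrality.

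For the path (a) and the cycle (b) I would get the upper bound from an explicit pattern. The block that super dominates locally is $BBWW$: in a stretch $\dots BBWW\dots$ the first white is witnessed by the black on its left (whose other neighbour is again black) and the second white by the black on its right. Tiling $P_n$ (respectively $C_n$) by copies of $BBWW$ and adjusting the final block according to $n\bmod 4$ produces a super dominating set of size $\lceil n/2\rceil$. For the path the two degree-one endpoints are automatically valid witnesses of their unique neighbour, so the boundary can always be patched and $\lceil n/2\rceil$ is attained for every $n$; this flexibility is precisely what the cycle lacks.

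The hard part will be the exceptional value $n\equiv 2\pmod 4$ of the cycle, where $\lceil n/2\rceil$ is \emph{not} achievable. Here I would argue by parity. If $C_n$ admitted a super dominating set $S$ with $|S|=n/2$, then $|\overline S|=|S|$, so the injective witness map is a bijection and every black vertex witnesses exactly one white vertex. Since $C_n$ is $2$-regular, each black vertex then has exactly one white neighbour and exactly one black neighbour, so the black vertices split into maximal arcs of length exactly $2$, forcing $|S|$ to be even. But $n\equiv 2\pmod 4$ makes $n/2$ odd, a contradiction; hence $\gamma_{sp}(C_n)\ge n/2+1=\lceil(n+1)/2\rceil$, and a $BBWW$-tiling into which a single extra $BB$ block is inserted realizes this value.

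Finally, for (c), (d) and (e) the lower bounds all follow from the same collision principle: two white vertices with a common potential witness cannot both be served. If two white vertices lie in a clique (for $K_n$) or in the same part (for $K_{n,m}$, using $\min\{n,m\}\ge2$), they share neighbours, so any vertex adjacent to one is adjacent to the other and its white neighbourhood is never a singleton; this forces $|\overline S|\le 1$ for $K_n$ and at most one white vertex per part for $K_{n,m}$, i.e.\ $|\overline S|\le 2$. For the star $K_{1,n}$ a white leaf can only be dominated, let alone witnessed, by the centre, so neither two leaves nor the centre together with a leaf may be white, giving $|\overline S|\le 1$. The matching upper bounds come from deleting one vertex ($K_n$, star) or one vertex from each part ($K_{n,m}$) and checking the witness condition on the resulting small complement, which is routine. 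I expect the parity argument for $C_n$ to be the only genuinely delicate step, the remaining bounds being direct consequences of the injective-witness principle.
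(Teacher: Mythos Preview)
The paper does not prove Theorem~\ref{thm-2}; it is quoted from \cite{Lemans} as background, so there is no in-paper argument to compare against. Your sketch is correct and self-contained.

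The injective witness map $u\mapsto v$ immediately gives $|\overline S|\le|S|$ and hence the $\lceil n/2\rceil$ lower bound used in (a) and (b). Your block pattern realises it; one small remark is that on $P_n$ the literal pattern $BBWW\cdots BBWW$ leaves the final $W$ undominated, so the tiling should be phrased as $WBBW$ (or, equivalently, your ``adjust the final block'' clause must also allow adjusting the first one). With that phase shift the boundary patches for all four residues are exactly as you describe.

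The parity step for $C_n$ with $n\equiv 2\pmod 4$ is the only nontrivial point and your argument is clean and correct: if $|S|=n/2$ then the witness map is a bijection, so every black vertex has precisely one white neighbour and hence precisely one black neighbour; the induced subgraph on $S$ is therefore $1$-regular, forcing $|S|$ even, which contradicts $n/2$ odd.

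For (c)--(e) your ``collision principle'' is exactly right. One cosmetic point: in (d) the hypothesis $\min\{n,m\}\ge 2$ is needed not for the lower bound (two same-part whites are always blocked) but for the upper bound, to guarantee a black witness in each part once one vertex per part is removed.
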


\begin{theorem}\cite{Nima}\label{Firend-thm}
 For the friendship graph $F_n$ it holds that $\gamma_{sp}(F_n)=n+1$.
\end{theorem}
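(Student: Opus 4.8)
The plan is to squeeze the answer between the general lower bound of Theorem~\ref{thm-1} and one explicitly constructed super dominating set, so the argument splits into two short independent halves. First I would fix notation following Figure~\ref{friend}: let $x$ be the central vertex of $F_n$ and let $u_1,u_2,\dots,u_{2n}$ be the outer vertices, grouped so that $\{x,u_{2i-1},u_{2i}\}$ spans the $i$-th triangle for $1\le i\le n$. Then each outer vertex $u_j$ has degree $2$ (its neighbours being $x$ and its triangle partner), the hub $x$ has degree $2n$, and $F_n$ has order $2n+1$ with no isolated vertices.

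For the lower bound I would simply invoke Theorem~\ref{thm-1}. Since $F_n$ has order $2n+1$ and no isolated vertices, that theorem gives $\gamma_{sp}(F_n)\ge \tfrac{2n+1}{2}=n+\tfrac12$; because the super domination number is an integer, this already forces $\gamma_{sp}(F_n)\ge n+1$, and no further work is needed on this side.

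For the upper bound I would exhibit the set $S=\{x\}\cup\{u_{2i-1}\mid 1\le i\le n\}$, which has exactly $n+1$ vertices, and verify that it is a super dominating set. It is dominating, since every white vertex $u_{2i}$ is adjacent to $x\in S$. To check the super domination condition I take an arbitrary white vertex $u_{2i}\in\overline{S}$ and test its black triangle partner $v=u_{2i-1}$: as $N(u_{2i-1})=\{x,u_{2i}\}$ with $x\in S$, we get $N(u_{2i-1})\cap\overline{S}=\{u_{2i}\}$, so $u_{2i-1}$ privately super dominates $u_{2i}$. Since this holds for every white vertex, $S$ is a super dominating set, giving $\gamma_{sp}(F_n)\le n+1$ and hence the claimed equality.

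The only genuinely subtle point — and the reason the answer is $n+1$ rather than $n$ — is the necessity of placing the hub $x$ into $S$. If one instead used an $n$-element set consisting of one endpoint per triangle while leaving $x$ white, then $x$ would be a white vertex whose only candidate private super dominators are the black outer vertices; but each such $u_{2i-1}$ then has \emph{two} white neighbours, namely $x$ and $u_{2i}$, so no black vertex can isolate $x$ as its unique white neighbour. I would record this remark explicitly, as it both justifies why the construction must spend one vertex on the central vertex and explains why the lower and upper bounds meet precisely at $n+1$.
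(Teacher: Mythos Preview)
Your proof is correct. Note, however, that the paper does not actually prove this theorem: it is stated as a cited result from~\cite{Nima} in the ``Previous work'' section, with no proof given. So there is no in-paper argument to compare against. Your approach---the integrality-sharpened lower bound from Theorem~\ref{thm-1} together with the explicit set $S=\{x\}\cup\{u_{2i-1}\mid 1\le i\le n\}$---is the natural one, and your closing remark about the necessity of including the hub $x$ anticipates exactly the structural observation the paper later uses when counting minimum super dominating sets of $F_n$ in Section~\ref{Sec:SPsetNumber}.
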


Later we will refer to the following known 
results from~\cite{Nima,Nima1}.

\begin{proposition}\cite{Nima1}\label{pro-disconnect}
Let $G$ be a disconnected graph with components $G_1$ and $G_2$. Then $$\gamma 
_{sp}(G)=\gamma _{sp}(G_1)+\gamma _{sp}(G_2).$$
\end{proposition}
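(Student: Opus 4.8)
The plan is to establish the equality by proving the two inequalities separately, exploiting the single structural fact that characterizes a disconnected graph: since $G_1$ and $G_2$ are distinct components, no edge of $G$ joins a vertex of $G_1$ to a vertex of $G_2$, and hence $N_G(v) = N_{G_i}(v)$ for every $v \in V(G_i)$. This localisation of neighbourhoods is what makes both the domination condition and the private-neighbour condition of super domination decompose cleanly across the two components. Throughout I take the complement of $S_i$ to be formed inside $G_i$, i.e.\ $V(G_i)\setminus S_i$, to distinguish it from $\overline{S}=V(G)\setminus S$.

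For the upper bound $\gamma_{sp}(G) \le \gamma_{sp}(G_1) + \gamma_{sp}(G_2)$, I would pick minimum super dominating sets $S_1$ of $G_1$ and $S_2$ of $G_2$ and verify that $S := S_1 \cup S_2$ is a super dominating set of $G$. It is clearly dominating, since each component is dominated by its own part and there are no cross edges. Given a white vertex $u \in \overline{S}$, say $u \in V(G_1)$, it is also uncovered by $S_1$ inside $G_1$, so there is a $v \in S_1$ with $N_{G_1}(v) \cap (V(G_1)\setminus S_1) = \{u\}$; using $N_G(v) = N_{G_1}(v) \subseteq V(G_1)$ together with $\overline{S}\cap V(G_1)=V(G_1)\setminus S_1$, one obtains $N_G(v) \cap \overline{S} = \{u\}$, so $v$ super dominates $u$ in $G$.

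For the lower bound $\gamma_{sp}(G) \ge \gamma_{sp}(G_1) + \gamma_{sp}(G_2)$, I would start from a minimum super dominating set $S$ of $G$ and set $S_i := S \cap V(G_i)$; since $|S| = |S_1| + |S_2|$, it suffices to show each $S_i$ is a super dominating set of $G_i$. The key point is that a black vertex $v$ witnessing the private-neighbour condition for a white vertex $u$ must be adjacent to $u$, hence lies in $u$'s component; so when $u \in \overline{S}\cap V(G_1)$, the witness $v$ automatically belongs to $S_1$, and again $N_{G_1}(v) \cap (V(G_1)\setminus S_1) = N_G(v) \cap \overline{S} = \{u\}$. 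That $S_i$ is also dominating in $G_i$ follows by the same confinement of neighbourhoods. Taking the minimum over $S$ then yields the inequality, and combining with the upper bound gives the claimed equality.

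The argument is largely bookkeeping; the one place deserving care is the identity $N_{G_i}(v) \cap (V(G_i)\setminus S_i) = N_G(v) \cap \overline{S}$, i.e.\ checking that restricting attention to a single component neither creates nor destroys white neighbours of a witness $v$. This is exactly where the absence of cross-component edges is essential and should be invoked explicitly rather than taken for granted. I note that the same proof extends verbatim to any finite number of components by induction or by grouping, but two components suffice for the statement as given.
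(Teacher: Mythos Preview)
Your argument is correct and is precisely the natural two-inequality proof: build a super dominating set of $G$ from optimal ones in the components, and conversely restrict an optimal super dominating set of $G$ to each component. The identity $N_{G_i}(v)\cap(V(G_i)\setminus S_i)=N_G(v)\cap\overline{S}$ that you isolate is exactly the hinge, and you justify it correctly via the absence of cross-component edges.

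Note, however, that the paper does not actually prove this proposition; it is quoted from~\cite{Nima1} as a known result and used as a tool. So there is no ``paper's own proof'' to compare against here. Your write-up is the standard one and would be what any reader reconstructs; there is nothing to add or correct.
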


\begin{theorem}\cite{Nima1}\label{chain}
	Let $G_1,G_2, \ldots, G_n$ be a finite sequence of pairwise disjoint connected graphs and let
$x_i,y_i \in V(G_i)$. Let $C(G_1,G_2,\dots,G_n)$ be the chain of graphs 
$\{G_i\}_{i=1}^n$ with respect to the vertices $\{x_i, y_i\}_{i=1}^k$ 
obtained by identifying the vertex $y_i$ with the vertex $x_{i+1}$ for 
$i=1,2,\ldots,n-1$ (see Figure~\ref{chain-n}). Then, for $n=2$, we have:
$$\gamma _{sp}(G_1)+\gamma _{sp}(G_2) -1\leq \gamma _{sp}(C(G_1,G_2)) \leq \gamma _{sp}(G_1)+\gamma _{sp}(G_2).$$
Furthermore, these bounds are tight.
\end{theorem}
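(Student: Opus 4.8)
The plan is to view the chain $C:=C(G_1,G_2)$ as the vertex-gluing of $G_1$ and $G_2$ at the single identified vertex $w:=y_1=x_2$, so that $V(C)=V(G_1)\cup V(G_2)$ with $V(G_1)\cap V(G_2)=\{w\}$ and every edge of $C$ lies inside $G_1$ or inside $G_2$. Hence $N_C(z)=N_{G_i}(z)$ for $z\in V(G_i)\setminus\{w\}$, while $N_C(w)=N_{G_1}(w)\cup N_{G_2}(w)$ is the only place where the two sides interact. Throughout I would use one elementary tool: recolouring a white vertex black preserves the super-dominating property, because enlarging the black set only shrinks $\overline S$, so each remaining white vertex keeps its private super-dominator and the new black vertex imposes no condition.

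For the lower bound I would start from a minimum super dominating set $S$ of $C$ and restrict it to the two sides, writing $S_i:=S\cap V(G_i)$, so that $|S|=|S_1|+|S_2|-|S_1\cap S_2|$ with $S_1\cap S_2\subseteq\{w\}$. The first claim is that $S_i\cup\{w\}$ is always a super dominating set of $G_i$: for a white vertex $u\in V(G_i)\setminus\{w\}$ its private super-dominator $v$ in $C$ must lie in $V(G_i)$, since all of $u$'s edges stay inside $G_i$; and because the complement of $S_i\cup\{w\}$ inside $G_i$ is $(\overline S\cap V(G_i))\setminus\{w\}$, the identity $N_C(v)\cap\overline S=\{u\}$ restricts to $N_{G_i}(v)\cap(\overline S\cap V(G_i)\setminus\{w\})=\{u\}$. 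When $w\in S$ this already gives $|S_i|\ge\gamma_{sp}(G_i)$ for both $i$, whence $|S|+1=|S_1|+|S_2|\ge\gamma_{sp}(G_1)+\gamma_{sp}(G_2)$. When $w\notin S$, the white vertex $w$ has a private super-dominator $v_w$ lying, say, in $V(G_1)$; then the sharper statement that $S_1$ itself is a super dominating set of $G_1$ holds (with $v_w$ super-dominating $w$), so $|S_1|\ge\gamma_{sp}(G_1)$ and $|S_2|+1\ge\gamma_{sp}(G_2)$, again giving $|S|+1\ge\gamma_{sp}(G_1)+\gamma_{sp}(G_2)$. Either way $\gamma_{sp}(C)\ge\gamma_{sp}(G_1)+\gamma_{sp}(G_2)-1$.

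For the upper bound I would take minimum super dominating sets $S_1,S_2$ and form $S=S_1\cup S_2$ inside $C$, of size at most $\gamma_{sp}(G_1)+\gamma_{sp}(G_2)$, with a saving of one exactly when $w\in S_1\cap S_2$. Away from $w$ the set $S$ inherits every private super-dominator from the two sides, so only the condition created by the merged neighbourhood of $w$ must be checked, and I would split into cases according to the colour of $w$. Whenever $G_1$ and $G_2$ admit minimum sets assigning $w$ the same colour I choose those: if both are white, $S$ is at once a super dominating set of $C$; if both are black, the surplus vertex provides slack, and the only possible defect -- that $w$ would have to privately super-dominate one vertex on each side -- is repaired by recolouring a single orphaned vertex black, using the tool above. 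The delicate case is when every minimum super dominating set of $G_1$ keeps $w$ black while every one of $G_2$ keeps $w$ white (or vice versa); then no colour choice is free, and I would argue by a budget-neutral exchange, trading the vertex that privately super-dominates $w$ on its white side (which becomes redundant once $w$ is made black) for the vertex left orphaned on the other side. I expect this disagreement case to be the main obstacle: both making $w$ black and making $w$ white can destroy a private super-dominator adjacent to $w$, so the exchange must be chosen so that no such breakage cascades.

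Finally, for tightness I would exhibit a concrete family meeting each bound. The upper bound is attained already by $C(P_2,P_2)=P_3$, where $\gamma_{sp}(P_3)=2=\gamma_{sp}(P_2)+\gamma_{sp}(P_2)$; the lower bound is attained by gluing two copies of $P_3$ at an endpoint, which yields $P_5$ with $\gamma_{sp}(P_5)=3=\gamma_{sp}(P_3)+\gamma_{sp}(P_3)-1$.
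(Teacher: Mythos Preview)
Your lower-bound argument is correct and essentially self-contained. The upper-bound argument, however, has a genuine gap in the ``delicate case'' you yourself flag: when every minimum super dominating set of $G_1$ contains $w$ while every minimum super dominating set of $G_2$ omits $w$. The exchange you sketch --- removing the super-dominator $v_2$ of $w$ on the $G_2$-side and inserting the orphaned vertex $u_1$ on the $G_1$-side --- does not in general produce a super dominating set: once $v_2$ becomes white it needs a private super-dominator adjacent to it, and the natural candidate $w$ typically has other white neighbours in $G_2$ (indeed, the problematic subcase is precisely $N_{G_2}(w)\cap\overline{S_2}\neq\emptyset$), so $N_C(w)\cap\overline{S'}\supsetneq\{v_2\}$. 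You acknowledge this as ``the main obstacle'' and leave it unresolved.

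The point you are missing is that this case is \emph{vacuous}. The paper's key Lemma~\ref{Lem:Partition}(a) shows that for any super dominating set $S$ there is another one $S'$ of the same cardinality with $\overline{S}\subseteq S'$. In particular, every graph admits a minimum super dominating set containing any prescribed vertex. Applying this to $G_2$ with the vertex $w$ immediately yields a minimum $S_2$ with $w\in S_2$, so you may always arrange $w\in S_1\cap S_2$ and land in your Case~B, where your one-vertex repair is valid. This is exactly the device the paper uses in the proof of the more general $r$-gluing bound (Theorem~\ref{G1G2Kr-thm}); the $r=1$ instance is the present theorem, which the paper itself cites from~\cite{Nima1} rather than re-proving. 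Your tightness examples with $C(P_2,P_2)=P_3$ and $C(P_3,P_3)=P_5$ are correct.
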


\begin{figure}
\begin{center}
\psscalebox{0.75 0.75}
{
\begin{pspicture}(0,-3.9483333)(12.236668,-2.8316667)
\psellipse[linecolor=black, linewidth=0.04, dimen=outer](1.2533334,-3.4416668)(1.0,0.4)
\psellipse[linecolor=black, linewidth=0.04, dimen=outer](3.2533333,-3.4416668)(1.0,0.4)
\psellipse[linecolor=black, linewidth=0.04, dimen=outer](5.2533336,-3.4416668)(1.0,0.4)
\psellipse[linecolor=black, linewidth=0.04, dimen=outer](8.853333,-3.4416668)(1.0,0.4)
\psellipse[linecolor=black, linewidth=0.04, dimen=outer](10.853333,-3.4416668)(1.0,0.4)
\psdots[linecolor=black, fillstyle=solid, dotstyle=o, dotsize=0.3, fillcolor=white](2.2533333,-3.4416666)
				\psdots[linecolor=black, fillstyle=solid, dotstyle=o, dotsize=0.3, fillcolor=white](0.25333345,-3.4416666)
				\psdots[linecolor=black, fillstyle=solid, dotstyle=o, dotsize=0.3, fillcolor=white](2.2533333,-3.4416666)
				\psdots[linecolor=black, fillstyle=solid, dotstyle=o, dotsize=0.3, fillcolor=white](4.2533336,-3.4416666)
				\psdots[linecolor=black, fillstyle=solid, dotstyle=o, dotsize=0.3, fillcolor=white](4.2533336,-3.4416666)
				\psdots[linecolor=black, fillstyle=solid, dotstyle=o, dotsize=0.3, fillcolor=white](9.853333,-3.4416666)
				\psdots[linecolor=black, fillstyle=solid, dotstyle=o, dotsize=0.3, fillcolor=white](9.853333,-3.4416666)
				\psdots[linecolor=black, fillstyle=solid, dotstyle=o, dotsize=0.3, fillcolor=white](11.853333,-3.4416666)
				\rput[bl](0.0,-3.135){$x_1$}
				\rput[bl](2.0400002,-3.2016668){$x_2$}
				\rput[bl](3.9866667,-3.1216667){$x_3$}
				\rput[bl](2.1733334,-3.9483335){$y_1$}
				\rput[bl](4.12,-3.9483335){$y_2$}
				\rput[bl](6.1733336,-3.8816667){$y_3$}
				\rput[bl](0.9600001,-3.6283333){$G_1$}
				\rput[bl](3.0,-3.5883334){$G_2$}
				\rput[bl](5.04,-3.5616667){$G_3$}
				\psdots[linecolor=black, fillstyle=solid, dotstyle=o, dotsize=0.3, fillcolor=white](6.2533336,-3.4416666)
				\psdots[linecolor=black, fillstyle=solid, dotstyle=o, dotsize=0.3, fillcolor=white](7.8533335,-3.4416666)
				\psdots[linecolor=black, dotsize=0.1](6.6533337,-3.4416666)
				\psdots[linecolor=black, dotsize=0.1](7.0533333,-3.4416666)
				\psdots[linecolor=black, dotsize=0.1](7.4533334,-3.4416666)
				\rput[bl](9.6,-3.0816667){$x_n$}
				\rput[bl](11.826667,-3.8683333){$y_n$}
				\rput[bl](9.586667,-3.9483335){$y_{n-1}$}
				\rput[bl](8.533334,-3.6016667){$G_{n-1}$}
				\rput[bl](7.4,-3.1616666){$x_{n-1}$}
				\rput[bl](10.613334,-3.575){$G_n$}
				\end{pspicture}
			}
\end{center}
\caption{Chain of $n$ graphs $G_1,G_2, \ldots, G_n$.} \label{chain-n}
\end{figure}

%\begin{figure}
%\includegraphics[width=0.95\textwidth, height = 3cm]{chain-n.png}
%\caption{Chain of $n$ graphs $G_1,G_2, \ldots, G_n$.} 
%\label{chain-n}
%\end{figure}
Besides being studied in exact graph classes, super domination has also been 
studied for different graph products. In particular, Dettlaff et al.~\cite{Dett} 
have studied the super domination number of lexicographic products and joins 
and also shown that determining the super domination number of a graph is  
$\mathcal{NP}$-hard. Klein et al.~\cite{Kle} 
have studied Cartesian products and (usual) corona products.

\paragraph{Our results.} 
In this paper, we continue the study of the super domination number of a graph, 
started in~\cite{Nima1,Nima2}.
First in Section~\ref{Sec:NeighbourCor}, we present a key lemma which will be used throughout this paper. In Section~\ref{Sec:NeighbourCor}, we find the exact value of the super 
domination number of the neighbourhood corona product of two graphs. In 
Section~\ref{Sec:rGlue}, we present tight lower and upper bounds on the 
$r$-gluing of two graphs and provide infinite families of graphs attaining these 
bounds. We study the super domination number of the  Haj\'{o}s sum of two graphs 
and find tight upper and lower bounds for it, together with infinite families 
of examples attaining the bounds, in Section~\ref{Sec:Hajos}. Finally,  in 
Section~\ref{Sec:SPsetNumber}, we count exactly the number of minimum size 
super dominating sets of some graph classes.

\section{Key lemma}\label{Sec:keylemma}

We introduce a technical key lemma for 
analysing super dominating sets. It will be needed
in most of the following results.

\begin{lemma}
\label{Lem:Partition}
Let $S$ be a super dominating set in a graph $G$. 
\begin{itemize}
\item[(a)]
Then there is a super dominating set $S'$ 
with same cardinality with $ \overline{S} \subseteq S' $
and $\overline{S'}\subseteq S $.

Furthermore, there is a bijective function
$ f: \overline{S'} \to \overline{S} $ so that $ f(a)=b$ holds
if and only if 
$a$ super dominates $b$ 
for the super dominating set $ S $ and
$b$ super dominates $a$ 
for the super dominating set $ S' $.
\item[(b)]
Let $ D =  S \, \cap \, S' $.  
Then $V(G)$ can be partitioned as 
$ V(G) = \overline{S} \, \dot{\cup} \, \overline{S'} 
\, \dot{\cup} \, D $, where it holds that
%$ D \subseteq S $ and $ D \subseteq S' $ 
$ S = \overline{S'} \, \dot{\cup} \, D $,  
$ S'=  \overline{S} \, \dot{\cup} \, D $.  

\item[(c)]
Let $S$ have cardinality $|V(G)|/2$. Then 
$\overline{S}$ is a super dominating set with the same cardinality
and it holds that $ \overline{S} = S' $ and $ \overline{S'} = S $. 

Furthermore, each vertex in $S$ super dominates exactly one vertex in $\overline{S}$
and vice versa, i.e.,
the function $f: S \to S' $ from (a) is uniquely determined. 
\end{itemize}
\end{lemma}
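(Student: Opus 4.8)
The plan is to derive all three parts from one structural object. For the given super dominating set $S$, choose for each white vertex $u \in \overline{S}$ a witness $g(u) \in S$ of super domination, i.e.\ with $N(g(u)) \cap \overline{S} = \{u\}$. The first observation I would record is that $g$ is injective: if $g(u_1) = g(u_2) = v$ then $\{u_1\} = N(v) \cap \overline{S} = \{u_2\}$, so $u_1 = u_2$. Setting $A := g(\overline{S}) \subseteq S$, injectivity gives $|A| = |\overline{S}|$. I would then define the swapped set
\[
S' := (S \setminus A) \cup \overline{S},
\]
and a direct set computation shows $\overline{S} \subseteq S'$, $\overline{S'} = A \subseteq S$, and $|S'| = |S| - |A| + |\overline{S}| = |S|$, which already gives the cardinality and containment claims of (a).

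The heart of part (a), and the step I expect to be the main obstacle, is checking that $S'$ is again a super dominating set and extracting the bijection. For each $w \in \overline{S'} = A$ write $w = g(u)$ with $u \in \overline{S}$ uniquely determined by injectivity. I would show that $u$ (which lies in $\overline{S} \subseteq S'$) super dominates $w$ with respect to $S'$, i.e.\ $N(u) \cap \overline{S'} = \{w\}$: since $w = g(u)$ is adjacent to $u$ and $w \in A = \overline{S'}$ we have $w \in N(u) \cap \overline{S'}$, while if some other $w' = g(u') \in A$ were adjacent to $u$, then $u \in N(w') \cap \overline{S} = \{u'\}$ would force $u' = u$, hence $w' = w$. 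This proves $S'$ is super dominating and at the same time identifies $f := g^{-1} : \overline{S'} \to \overline{S}$: the equality $f(a) = b$ means $a = g(b)$, which is exactly the statement that $a$ super dominates $b$ for $S$ and, by the claim just proved, that $b$ super dominates $a$ for $S'$; the reverse implication holds because the requirement that $a$ super dominate $b$ for $S$ already forces $b$ to be the unique preimage $g^{-1}(a)$.

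Part (b) is then set bookkeeping built on the two containments $\overline{S'} \subseteq S$ and $\overline{S} \subseteq S'$ from (a). Writing $D := S \cap S'$, I would split an arbitrary $x \in S$ according to whether or not $x \in S'$ to get $S = \overline{S'} \,\dot{\cup}\, D$, and symmetrically $S' = \overline{S} \,\dot{\cup}\, D$; combining these with $V(G) = S \,\dot{\cup}\, \overline{S}$ yields the three-way partition. The only things to verify are the pairwise disjointnesses $\overline{S} \cap \overline{S'} = \overline{S} \cap D = \overline{S'} \cap D = \emptyset$, each of which reduces immediately to $\overline{S'} \subseteq S$ and $\overline{S} \subseteq S'$.

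For part (c), assume $|S| = |V(G)|/2$, so $|S| = |\overline{S}|$. Then the injection $g : \overline{S} \to S$ between equinumerous sets is a bijection, so $A = g(\overline{S}) = S$; substituting into the formulas of (a) gives $\overline{S'} = S$ and $S' = \overline{S}$, and since $S'$ is super dominating by (a), so is $\overline{S}$. Uniqueness of $f$ I would obtain by a counting argument: each black vertex super dominates at most one white vertex (as $N(v) \cap \overline{S}$ equals a prescribed singleton for at most one choice of that singleton), whereas super domination requires every white vertex to be super dominated at least once. When $|S| = |\overline{S}|$ these two constraints force the super-domination relation to be a perfect matching between $S$ and $\overline{S}$, so that every black vertex super dominates exactly one white vertex and vice versa. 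This eliminates all freedom in choosing $g$, so $f = g^{-1}$ is uniquely determined, completing the proof.
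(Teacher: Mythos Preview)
Your proof is correct and follows essentially the same approach as the paper: you build the witness map $g:\overline{S}\to S$, swap $A=g(\overline{S})$ out of $S$ and $\overline{S}$ in to obtain $S'$, and verify that $S'$ is super dominating by the same neighbourhood argument the paper uses (your $w'=g(u')$ adjacent to $u$ forcing $u'=u$ is exactly their contradiction). The only cosmetic difference is that the paper defines $f$ directly while you define $g$ first and set $f=g^{-1}$; your exposition of parts~(b) and~(c) is somewhat more explicit than the paper's, but the content is the same.
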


\pagebreak[3]

\begin{proof}
\begin{itemize}
\item[(a)]
Let $S\subseteq V(G)$ be a super dominating set in $G$. 
Then for each $b\in \overline{S}$ there exists a 
vertex $a\in S$ such that $N(a)\cap \overline{S}=\{b\}$. 
We construct the new super dominating set $ S'$ by replacing each
$ a \in S $ by the corresponding $ b \in \overline{S} $. 
Denote $A \subseteq S $ as the set of all vertices $ a $
removed from $S$ during this process. 
Let $ f: A \to \overline{S} $ be the corresponding function 
with $ f(a) = b $ for each $ a \in A $ and $ b \in \overline{S} $ super dominated by $a$.
By construction, $A = \overline{S'} $, $ |S| = |S'| $ and 
$|\overline{S}| = |\overline{S'}| $ hold.
As each vertex $ a \in \overline{S'}$ can super dominate only one vertex
$ b \in \overline{S}$, $f$ is well defined.
By construction, for each $b$ we have only one $a$, i.e., 
$f$ is also injective. Because of 
$ |\overline{S'}| = |\overline{S}| $, it is also bijective.

\noindent
{\bf Claim:} 
$S'$ is a super dominating set in $G$. 

{\bf Proof (Claim):} 
Let $ f(a)=b$ with $a \in \overline{S'} $, $ b \in \overline{S}$. 
Assume that $ b $ does not super dominate $a $.  
Then $ b $ is also adjacent to another $a' \in \overline{S'}$.
Thus, there is a $b' \in \overline{S}$ with $ f(a') = b' $.
This would mean that $a'$ super dominates $b'$, but it is also
adjacent to $b$, which is a contradiction.
Thus, $b$ super dominates $a$, and the claim follows.

%and that $ E = \overline{S'}$. 
%We know that for each $u'\in E$, 
%there exists a vertex $u\in \overline{S}$ such that $N(u') \, \cap \,
%\overline{S}=\{u\}$. Consider now $N(u)$. If we have $v\in N(u)\cap 
%E$, then, by definition of $E$, we have $|N(v)\cap \overline{S}|=1$. Thus, $N(v)\cap 
%\overline{S}=\{u\}$. 
%However, now $v=u'$ since we selected exactly one vertex of $E$ for each 
%vertex in $\overline{S}$. Therefore, $N(u)\cap E=\{u'\}$. Hence, $S'$ is a 
%super dominating set by definition.
\item[(b)] 
This follows easily from (a).
\item[(c)] 
Here $D = S \cap S' = \emptyset $ holds.
%Let $ f(a)=b$ with $a \in S $, $ b \in S'$, and $a$ super dominates $b$. 
%Assume that also $ a' \in S $ super dominates $b \in S' $.  
%Then there is a $b' \in S' \setminus \{b\} $ with $ f(a') = b' $.
%This would mean that $a'$ super dominates $b'$ 
%but it is also adjacent to $b$, which is a contradiction.
From (b) it follows that $S=\overline{S'}$ and $S'=\overline{S}$.
Since each vertex can super dominate at most one other vertex, each
vertex in $S$ (and each vertex in $S'$) super dominates exactly one vertex. 
Thus, $f$ is uniquely determined.
\qed
\end{itemize}
\end{proof}

\section{Super domination number of neighbourhood corona product of two graphs}\label{Sec:NeighbourCor}

In this section, we study the super domination number of the neighbourhood 
corona product of two graphs. 
Let $G$ and $H$ be two graphs of orders $n,m\in \N$, respectively. 
It is clear that 
\begin{eqnarray}
\label{eqmn}
\gamma_{sp}(G \star H) & \leq & \gamma_{sp}(G)+nm,
\end{eqnarray}
since if we consider  all vertices of all copies of $H$ in our super dominating 
set, then we only need to find a super dominating set for $G$ and the claim 
follows by the definition of a super dominating set. 

%Before we find a better upper bound for the super domination number of 
%neighbourhood corona product of two graphs, we need the following lemma.

Later (in Corollary~\ref{corneighbour})
we will show that, under some conditions, it holds that
$\gamma_{sp}(G \star H) = n(\gamma_{sp}(H)+1)$.
The following proposition shows that this is an improvement
over the trivial upper bound from~\eqref{eqmn} for $m\geq2$.

\begin{proposition}
Let $G$ and $H$ be two connected graphs of orders $n$ and $m\neq 1$, respectively. Then
$$n(\gamma_{sp}(H)+1)< \gamma_{sp}(G)+nm.$$
\end{proposition}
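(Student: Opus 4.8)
The plan is to reduce the claimed strict inequality to two elementary estimates, both supplied by Theorem~\ref{thm-1}. Rewriting the left-hand side as $n(\gamma_{sp}(H)+1) = n\gamma_{sp}(H) + n$, I observe that the inequality $n(\gamma_{sp}(H)+1) < \gamma_{sp}(G) + nm$ is equivalent to $n\bigl(\gamma_{sp}(H) - (m-1)\bigr) < \gamma_{sp}(G)$. So it suffices to show that the left factor is nonpositive while the right side is strictly positive.

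First I would bound $\gamma_{sp}(H)$ from above. Since $H$ is connected of order $m \geq 2$, it has no isolated vertices, so the upper bound in Theorem~\ref{thm-1} applies and gives $\gamma_{sp}(H) \leq m-1$. Hence $\gamma_{sp}(H) - (m-1) \leq 0$, and therefore $n\bigl(\gamma_{sp}(H) - (m-1)\bigr) \leq 0$. Equivalently, $n(\gamma_{sp}(H)+1) \leq n\cdot m = nm$. Second, I would bound $\gamma_{sp}(G)$ from below: any graph on a nonempty vertex set satisfies $\gamma_{sp}(G) \geq 1$, since the empty set cannot dominate a nonempty vertex set (and, when $G$ has no isolated vertices, Theorem~\ref{thm-1} even gives $\gamma_{sp}(G) \geq n/2 > 0$). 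In all cases $\gamma_{sp}(G) \geq 1 > 0$.

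Combining the two estimates yields the chain $n(\gamma_{sp}(H)+1) \leq nm < nm + \gamma_{sp}(G) = \gamma_{sp}(G) + nm$, where the first inequality uses $\gamma_{sp}(H)\leq m-1$ and the strict step uses $\gamma_{sp}(G)\geq 1$. This is exactly the assertion. The argument is essentially immediate, so there is no substantial obstacle; the only point requiring care is to confirm that the hypotheses of Theorem~\ref{thm-1} are in force, namely that $H$ has no isolated vertices (which follows from connectedness together with $m\geq 2$, i.e., $m\neq 1$) and that $\gamma_{sp}(G)$ is strictly positive (which holds for any nonempty graph, so the condition $m\neq 1$ rather than any condition on $G$ is what drives the strictness).
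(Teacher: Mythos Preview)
Your proof is correct and follows essentially the same route as the paper: use Theorem~\ref{thm-1} to obtain $\gamma_{sp}(H)\le m-1$, hence $n(\gamma_{sp}(H)+1)\le nm$, and then add the strictly positive term $\gamma_{sp}(G)$ to get the strict inequality. If anything, you are more careful than the paper in verifying the hypotheses of Theorem~\ref{thm-1} and in justifying $\gamma_{sp}(G)\ge 1$.
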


\begin{proof}
By Theorem~\ref{thm-1}, we know that $\gamma_{sp}(H) \leq m-1$. So we have 
$n (\gamma_{sp}(H) +1) = n \gamma_{sp}(H) +n  \leq nm$. 
%Hence, we have $ \gamma_{sp}(H) +n \leq nm$. 
Hence, we have $n(\gamma_{sp}(H) +1) < nm +\gamma_{sp}(G)$ and therefore, we have the result.
\qed
\end{proof}

Next we present a tight upper bound for $\gamma_{sp}(G \star H)$.

\begin{theorem}\label{upperstar}
Let $G$ and $H$ be two connected graphs of orders $n,m \in \N$. Then	
$$\gamma_{sp}(G \star H)\leq n(\gamma_{sp}(H)+1).$$
\end{theorem}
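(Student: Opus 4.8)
The plan is to exhibit an explicit super dominating set of $G \star H$ of the claimed size. Write $V(G) = \{g_1, \dots, g_n\}$ and let $H_1, \dots, H_n$ be the $n$ copies of $H$, where $H_i$ is the copy associated to $g_i$ and $V(H_i) = \{h_{i,1}, \dots, h_{i,m}\}$. From the definition of $\star$, a vertex of $H_i$ is adjacent only to the other vertices of $H_i$ (via the edges of $H$) and to those $g_j$ with $g_ig_j \in E(G)$. In particular, two distinct copies $H_i$ and $H_{i'}$ share no edge, and $H_i$ is not joined to $g_i$ itself.

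Fix a minimum super dominating set $S_H$ of $H$, so $|S_H| = \gamma_{sp}(H)$, and let $S_H^{(i)} \subseteq V(H_i)$ be its image inside the $i$-th copy. I define
$$ S \;=\; V(G) \,\cup\, \bigcup_{i=1}^{n} S_H^{(i)}. $$
Since this union is disjoint, $|S| = n + n\,\gamma_{sp}(H) = n(\gamma_{sp}(H)+1)$, which is exactly the target size; it remains only to check that $S$ super dominates $G \star H$. Because every vertex of $G$ lies in $S$, the complement is $\overline{S} = \bigcup_{i=1}^{n} \bigl(V(H_i) \setminus S_H^{(i)}\bigr)$, i.e.\ $\overline{S}$ consists purely of white copy-vertices. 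First, $S$ is dominating: each white $h_{i,k}$ lies in $H_i$, and since $S_H$ is in particular a dominating set of $H$, the vertex $h_{i,k}$ has a neighbour inside $S_H^{(i)} \subseteq S$. For super domination, take any white $u = h_{i,k}$. As $S_H$ super dominates $H$, there is a $v = h_{i,\ell} \in S_H^{(i)}$ with $N_{H_i}(v) \cap \bigl(V(H_i)\setminus S_H^{(i)}\bigr) = \{u\}$. The remaining neighbours of $v$ in $G \star H$ are some vertices $g_j$, all of which are black, so $N_{G \star H}(v) \cap \overline{S} = N_{H_i}(v) \cap \bigl(V(H_i)\setminus S_H^{(i)}\bigr) = \{u\}$, i.e.\ $v$ super dominates $u$. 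Since $u$ was arbitrary, $S$ is a super dominating set and the bound follows.

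The one point requiring care — and the reason this beats the trivial estimate $\gamma_{sp}(G) + nm$ of \eqref{eqmn} — is that a black copy-vertex $v$ also has edges leading out to $G$, and these must not create spurious white neighbours that would spoil the condition $N(v)\cap\overline{S} = \{u\}$. Placing all of $V(G)$ into $S$ is precisely what prevents this: it decouples the $n$ copies from one another and confines the super-domination witness of each white vertex to its own copy, reducing the whole verification to $n$ independent instances of super domination inside $H$. I expect no further obstacle, since once this decoupling is recognised the size count and the membership checks are routine.
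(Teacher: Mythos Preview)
Your proof is correct and follows essentially the same approach as the paper: take $S = V(G)$ together with a copy of a minimum super dominating set of $H$ in each $H_i$, and verify that every white vertex (necessarily in some $H_i$) is super dominated by the same witness that works inside $H_i$, since that witness's only external neighbours lie in $V(G)\subseteq S$. The paper's argument is slightly terser but identical in substance.
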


\begin{proof}
%For a given $ w \in V(G) $, consider the copy $H_w$ of $H$, and 
%let $S_w$ be a super dominating set for this $H_w$. To create a 
%super dominating set for $G \star H$, we put all vertices of $G$ in a set, 
%say $S$. Next, for each $H_w$, we consider $S_w$ and place also these 
%vertices in $S$. 
%
%Let $u\in \overline{S}$. Then $ u \in V(H_w) $  for some copy $H_w$. 
%Since there exists a vertex $v\in S_w $  such that 
%$N_{H_w}(v)\cap \overline{S_w}=\{u\}$ 
%and $N_G(v)\subseteq S$, $N_{H_x}(v) = \emptyset $ for all $ x \in V(G) 
%\setminus \{w\} $, it follows that
%$N_{G \star H}(v)\cap \overline{S}=\{u\}$ for this $v$.
%Thus, the set $S$ is a super dominating set for $G \star H$ with 
Let $S_H$ be a super dominating set for $H$. We create a set $S$ for $G \star H$ by 
placing all vertices of $G$ in $S$. Next, for each copy of $H$, we place all
vertices corresponding to $S_H$ in $S$. In the following, we show 
that $S$ is a super dominating set for $G\star H$.  

Let $u\in \overline{S}$. Then $ u \in V(H') $  for some copy of $H$. Let $ v\in 
V(H')$ super dominate $u$ in $H'$. If $v$ does not super dominate $u$ in 
$G\star H$, then $v$ has another neighbour in $\overline{S}\cap (V(G\star   
H)\setminus V(H'))$. However, this is not possible, since $V(G)\subseteq S$ and 
there are no edges between different copies of $H$. Thus, $S$  
is a super dominating set for $G \star H$ with
cardinality $n \gamma_{sp}(H)+n = n( \gamma_{sp}(H)+1)$ and the assertion follows.
\qed
\end{proof}

In the following theorem, we show that the upper bound in Theorem~\ref{upperstar} 
is actually the super domination number of $G \star H$, when $G$ and 
$H\neq K_1$ are connected graphs and $\gamma_{sp}(H)<m-1$ or it holds that $ H =K_m $.

\begin{theorem}\label{notbiggerupperstar}
Let $G$ and $H$ be two connected graphs of orders $n$ and $m\not= 1$, respectively, 
where it holds that $\gamma_{sp}(H)<m-1$ or it holds that $ H =K_m $.
Then	$$ n(\gamma_{sp}(H)+1)\leq \gamma_{sp}(G \star H). $$
\end{theorem}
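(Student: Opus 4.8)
The plan is to fix an arbitrary super dominating set $S$ of $G \star H$ and prove $|S| \ge n(\gamma_{sp}(H)+1)$; minimizing over $S$ then gives the bound. Write $V(G) = \{g_1, \dots, g_n\}$ and let $H_1, \dots, H_n$ be the copies of $H$, so that a vertex $h$ of $H_i$ satisfies $N_{G\star H}(h) = N_{H_i}(h) \cup N_G(g_i)$, a vertex $g_j$ is adjacent to all of $V(H_i)$ exactly when $g_j \in N_G(g_i)$, and distinct copies of $H$ are mutually non-adjacent. Since the sets $\{g_i\} \cup V(H_i)$ partition $V(G\star H)$, writing $B_i = S \cap V(H_i)$, $W_i = V(H_i) \setminus S$, $W_G = V(G)\setminus S$ and $b_i = 1$ if $g_i \in S$ and $0$ otherwise, it suffices to prove $\sum_{i=1}^n (|B_i| + b_i) \ge n(\gamma_{sp}(H)+1)$.

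First I would establish the uniform bound $|B_i| \ge \gamma_{sp}(H)$ for every copy. The key is to classify how a white vertex $u \in W_i$ is super-dominated: its dominator lies either inside $H_i$, or is a vertex $g_j$ of $G$ — but in the latter case $V(H_i) \subseteq N(g_j)$ forces $W_i = \{u\}$. Hence if $|W_i| \ge 2$, every white vertex of $H_i$ is super-dominated from within $H_i$, and stripping the $N_G(g_i)$-part (which must meet $\overline{S}$ trivially) shows that $B_i$ is a super dominating set of $H$, so $|B_i| \ge \gamma_{sp}(H)$; the cases $|W_i| \le 1$ give $|B_i| \ge m-1 \ge \gamma_{sp}(H)$ by Theorem~\ref{thm-1}. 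Consequently a block can be \emph{deficient} (meaning $|B_i| + b_i < \gamma_{sp}(H)+1$) only when $g_i$ is white, and then the deficiency is exactly $1$, occurring precisely when $|B_i| = \gamma_{sp}(H)$ (so in particular $W_i \neq \emptyset$).

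The heart of the argument is a compensation step, since a naive per-block inequality genuinely fails: a white $g_i$ can make block $i$ cheap, so each deficiency must be charged to a surplus elsewhere. For a deficient block $i$, the white vertex $g_i$ cannot be super-dominated by a $G$-vertex (that would force $W_i = \emptyset$), so its dominator is a vertex $h$ of some copy $H_p$ with $g_p \in N_G(g_i)$; the condition $N(h) \cap \overline{S} = \{g_i\}$ then forces $N_G(g_p) \cap W_G = \{g_i\}$. This recovers $g_i$ from $p$, so the assignment $\phi(i) := p$ is injective on deficient blocks. I would then show $\phi(i)$ is a genuine surplus block: every white vertex of $H_i$ is super-dominated from within $H_i$ (a white $g_i$ rules out a $G$-dominator for it), whence $N_G(g_i) \cap W_G = \emptyset$ and $g_p$ is black; and since $g_i$ is a white common neighbour of every vertex of $H_p$, no vertex of $H_p$ can be super-dominated from within $H_p$, forcing $|W_p| \le 1$. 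Thus $|B_p| + b_p \ge (m-1) + 1 = m$, giving block $p$ a surplus of at least $m - \gamma_{sp}(H) - 1$.

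The two hypotheses enter exactly here. When $\gamma_{sp}(H) < m-1$ this surplus is already $\ge 1$; when $H = K_m$ (so $\gamma_{sp}(H) = m-1$ and $m = \gamma_{sp}(H)+1$) the same estimate is only $\ge 0$, and I would instead use that the dominator $h$ of $g_i$ has no white neighbour in the \emph{complete} copy $H_p$, which forces $W_p = \emptyset$ and surplus exactly $1$. In either case $\phi$ injects the deficient blocks into non-deficient blocks of surplus $\ge 1$, so the total deficiency is at most the total surplus and $\sum_i (|B_i| + b_i) \ge n(\gamma_{sp}(H)+1)$ follows. The main obstacle is recognizing that the estimate cannot be carried out block by block and setting up the injective charging $\phi$ correctly; the uniform fact $|B_i| \ge \gamma_{sp}(H)$ together with the rigidity of super-domination for a white $G$-vertex are the tools that make the charging close.
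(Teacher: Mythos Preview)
Your proof is correct and follows essentially the same strategy as the paper: the paper partitions $V(G\star H)$ into the same blocks $\{g_i\}\cup V(H_i)$, proves your uniform bound $|B_i|\ge\gamma_{sp}(H)$ as its Claim~1, and then establishes your charging map $\phi$ via its Claims~2 and~3 (under-satisfied blocks are matched injectively to over-satisfied ones). Your injectivity argument via $N_G(g_p)\cap W_G=\{g_i\}$ is in fact cleaner than the paper's phrasing of Claim~3, but the underlying idea is identical.
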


\begin{proof}
Let $G=(V(G),E(G))$ and $H=(V(H),E(H))$. 
Let $S$ be a super dominating set for $G \star H$.
Let $ H_w$ be the copy of $ H $ corresponding to $ w \in V(G) $.
Then it holds that $V (G \star H) = V(G) \cup \bigcup_{w \in V(G)} V (H_w) $. 

\medskip

\noindent
{\bf Claim 1:} In each copy of $H$, the set $S$ has at least
$\gamma_{sp}(H)$ vertices. 

\noindent{\bf Proof (Claim 1):} 
For a given $ w \in V(G) $, consider the copy $H_w$ of $H$. 
Assume that $r := |S \cap V(H_w)| < \gamma_{sp}(H)$. 
As $H$ is connected, it follows by Theorem~\ref{thm-1} 
that $r < \gamma_{sp}(H)\leq m-1$. Thus, this copy
of $H$ has at least two vertices which are contained in $ \overline{S} $.

As $ S \cap V(H_w) $ is \textit{not} a super dominating set of $H_w$,
there must exist a vertex $u\in \overline{S} \cap V(H_w)$, 
such that there does \textit{not} exist a vertex $v \in S\cap V(H_w) $
for which $N_{H_w}(v) \cap \overline{S}=\{u\}$.
On the other hand, for this $u\in \overline{S} \cap V(H_w)$, 
there exists a vertex $v \in (V(G \star H) \setminus V(H_w)) \cap S $ 
such that $N_{G \star H}(v) \cap \overline{S}=\{u\}$. 

As $v \in V(G \star H) \setminus V(H_w) $ holds, 
we have two possibilities. Either 
$v\in V(G)$ or $v\in V(H_x)$ for some $ x \in V(G) \setminus \{w\} $.  

Firstly, $v\in V(G)$ cannot hold, since then all vertices in $V(H_w) \setminus \{u\}$ 
would lie in~$S$, because $v$ is adjacent to all of them. Thus, $ r=m-1$ holds, 
a contradiction.

Secondly, $v\in V(H_x)$ cannot hold, as there are 
no adjacent vertices between different copies of $H$.
Thus, we have a contradiction again. 

Claim 1 follows.

\medskip

In the following, for a given $ w \in V(G) $,
we define a block of vertices $ B_w(G \star H ) = \{ w \} \cup V(H_w) $
(or shortly $ B_w $). 
The blocks $B_w$ clearly partition the vertex set $V(G\star H)$.
%Clearly, it holds that $ V (G \star H ) = \bigcup_{w \in V(G)} B_w( G \star H) $.

For the given super dominating set $ S $, we define a block as 
\textit{over-satisfied},
if it has more than $ \gamma_{sp}(H) + 1 $ vertices (note that for $m=1$, i.e., 
$H=K_1$, a block can never be over-satisfied), 
as \textit{satisfied}, if it has exactly $ \gamma_{sp}(H) + 1 $ vertices, 
and as \textit{under-satisfied}, if it has 
less than $ \gamma_{sp}(H) + 1 $ vertices.
Note that by Claim 1, an under-satisfied block $B_w $ 
has always exactly $ \gamma_{sp}(H) $ vertices in $ S $
and that $ w $ lies in $ \overline{S} $.

\medskip

\noindent {\bf Claim 2:} 
%Let $ w \in V(G) $ and let $ B_w $ be under-satisfied. Let $ x \in V(G) 
%\setminus \{w\} $ be chosen such that $ \{w,x\} \in E(G)$
%and one vertex in $ B_x $ super dominates $ w \in V(G) $.
%Then $B_x$ is an over-satisfied block. 
Let $ w \in V(G) $ and let $ B_w $ be under-satisfied. Let $ x \in V(G) 
\setminus \{w\} $ be chosen so that
a vertex in $ B_x $ super dominates $ w \in V(G) $ (and $ \{w,x\} \in E(G)$).
Then $B_x$ is an over-satisfied block.

\noindent{\bf Proof (Claim 2):} 
As we have mentioned above, in an under-satisfied block $B_w$ there are 
exactly $\gamma_{sp}(H)$ vertices in $S$ and $ w\in \overline{S}$. Thus, at 
least one vertex $ y \in V(H_w) $ lies in $ \overline{S} $.

%By Claim 1, $ S $ has at least $ \gamma_{sp}(H) $ vertices in $ H_w$, and as 
%$B_w$ is under-satisfied, it has even exactly $ \gamma_{sp}(H) $ vertices in $ 
%H_w$.  It follows that $ w \in \overline{S} $
%and that at least one vertex $ y \in V(H_w) $ lies in $ \overline{S} $.

First, assume that $ x \in \overline{S} $. 
On the one hand, each vertex $ z \in V(H_w) $ 
adjacent to $ y $ is also adjacent to $ x $, 
i.e., $z$ cannot super dominate $y$.  
On the other hand, each vertex $ z \notin V(H_w) $ adjacent to $ y$
is also adjacent to $w$,
i.e., $z$ cannot super dominate $y$ either.  
In summary, $y$ cannot be super dominated by another 
vertex of $S$.
Thus, by contradiction $ x \in S $  follows.

%At least two vertices $ y,z \in V(H_w) $ lie in $ \overline{S} $.

%First, assume that $ x \in \overline{S} $. 
%On the one hand, each vertex $ a \in V(H_w) $ 
%adjacent to $ y $ or $z$ is also adjacent to $ x $, 
%i.e., $a$ cannot super dominate $y$ or $z$.  
%On the other hand, each vertex $ a \notin V(H_w) $ adjacent to $ y$
%or $z$ is also adjacent to the other vertex of those two,
%i.e., $a$ cannot super dominate $y$ or $z$ either.  
%In summary, neither $y$ nor $z$ can be super dominated by another 
%vertex of $S$.
%Thus, by contradiction $ x \in S $  follows.

We continue by dividing the proof into two cases.

\begin{description}
\item[{\bf Case 1:}] $ \gamma_{sp}(H) <m-1 $. 

Assume that there are $ b,c \in V(H_x) \cap \overline{S} $. 
Each vertex $ z \in V(H_x) $, 
adjacent to $ b $ or $c$, is also adjacent to $ w $, 
and if $z\not\in V(H_x) $ is adjacent to $b$, then it is also adjacent to $c$ and vice versa.
So neither $b$ nor $c$ can be super dominated by another vertex.
Thus, there are at least $ \gamma_{sp}(H)+1 $
vertices in $ V(H_x) \cap S $, 
and $ B_x $ is over-satisfied.

\item[{\bf Case 2:}] $ H =K_m $. 

Assume that there is a $ b \in V(H_x) \cap \overline{S} $. 
By the choice of $ x \in V(G) $, one vertex of $B_x$ super dominates 
$w$. This cannot be $x$, as $x$ is also adjacent to $y$.
On the other hand, as $ H=K_m$, each other vertex in $B_x$
super dominating $w$ is also adjacent to $b$, a contradiction.
Thus, there does not exist such $b$, and $B_x$ is over-satisfied. 

\end{description}

Claim 2 follows.

\medskip

\noindent
{\bf Claim 3:} 
No two under-satisfied blocks $ B_w $ and $ B_{w'} $ 
can be assigned to the same over-satisfied block $ B_x $.

\noindent{\bf Proof (Claim 3):} 
Assume that this does not hold.
If there is a $ d \in B_x $ super dominating $ w \in V(G) $,
then it super dominates also $w' \in V(G) $ and vice versa. 
On the other hand,
no vertex can super dominate two vertices. Thus, we have a 
contradiction, and Claim 3 follows.

\medskip

By Claim $1$, for each under-satisfied block $B_w$, where $ w \in V(G) $,
we have exactly $ \gamma_{sp}(H) $ vertices in $ S $. By Claims 2 and 3,  we
have a corresponding over-satisfied block with at least $ \gamma_{sp}(H)+2 $ vertices in $S$.

In total, we have at least $ \gamma_{sp}(H) +1 $ vertices in $S$ for each block.
Thus, each super dominating set $ $ has cardinality of at least
$n(\gamma_{sp}(H)+1)$ in $G \star H$. 
\qed
\end{proof}

By using Theorems~\ref{upperstar} and~\ref{notbiggerupperstar}, we have the 
following result which gives us the exact value of the super domination number 
of $G \star H$.

\begin{corollary}\label{corneighbour}
Let $G$ and $H$ be two connected graphs of orders $n$ and $m\not= 1$, respectively,
with $\gamma_{sp}(H) < m-1$ or $H=K_m$. Then	
$$\gamma_{sp}(G \star H)= n(\gamma_{sp}(H)+1).$$
\end{corollary}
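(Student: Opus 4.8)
The plan is to obtain the claimed equality by sandwiching $\gamma_{sp}(G \star H)$ between a matching upper and lower bound, both of which have already been established earlier in this section. Since the two inequalities share the identical value $n(\gamma_{sp}(H)+1)$, combining them forces equality, so no fresh combinatorial argument is needed at this stage.

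First I would invoke Theorem~\ref{upperstar}, which applies to any two connected graphs $G$ and $H$ of orders $n,m \in \N$ and gives
$$\gamma_{sp}(G \star H) \leq n(\gamma_{sp}(H)+1).$$
This direction requires nothing beyond connectedness of the two factors, so it certainly holds under the assumptions of the corollary.

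Next I would invoke Theorem~\ref{notbiggerupperstar}, which is where the extra hypothesis actually enters. Precisely because $G$ and $H \neq K_1$ are connected and either $\gamma_{sp}(H) < m-1$ or $H = K_m$ holds, that theorem yields the reverse inequality
$$n(\gamma_{sp}(H)+1) \leq \gamma_{sp}(G \star H).$$
Chaining the two displays gives $\gamma_{sp}(G \star H) = n(\gamma_{sp}(H)+1)$, which is the assertion.

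The one point to verify is that the hypotheses listed in the corollary coincide exactly with those demanded by Theorem~\ref{notbiggerupperstar}, namely connectedness of both factors, $m \neq 1$, and the disjunction $\gamma_{sp}(H) < m-1$ or $H = K_m$; the upper bound, being hypothesis-free among connected graphs, imposes no additional constraint. There is no genuine obstacle here: all of the real work—the partition of $V(G \star H)$ into blocks $B_w$, the distinction between over-, exactly-, and under-satisfied blocks, and the counting carried out in Claims~1–3—has already been done inside the proof of Theorem~\ref{notbiggerupperstar}, and this corollary is merely the formal synthesis of the two matching bounds.
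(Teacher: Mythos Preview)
Your proposal is correct and matches the paper's approach exactly: the paper states the corollary immediately after Theorems~\ref{upperstar} and~\ref{notbiggerupperstar} with the remark that it follows by combining those two results, which is precisely the sandwich argument you give.
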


In the following example, we show that ``$\gamma_{sp}(H)< |V(H)|-1$ or 
$H=K_m$'' is a 
necessary condition for Theorem \ref{notbiggerupperstar} and Corollary 
\ref{corneighbour}.

\begin{example}	
Consider the graph $G$ from Figure~\ref{P3starG}. One can easily check 
that $\gamma_{sp}(G)=3$ and the set of black vertices in $P_3\star G$ is a super 
dominating set for $G$. We have $\gamma_{sp}(P_3 \star G) = 11 < 12 = 
3 \cdot (\gamma_{sp}(G)+1).$
\end{example}

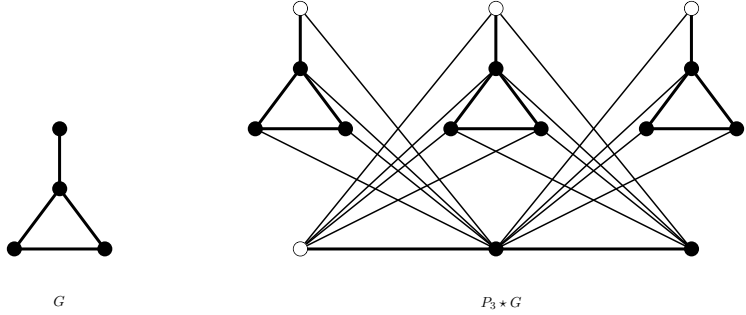
\begin{figure}
\begin{center}
\psscalebox{0.5 0.5}
{
\begin{pspicture}(0,-5.0993056)(19.59423,3.1020834)
\psdots[linecolor=black, dotsize=0.4](7.7971153,1.3006946)
\psdots[linecolor=black, dotsize=0.4](6.5971155,-0.2993054)
\psdots[linecolor=black, dotsize=0.4](8.997115,-0.2993054)
\psline[linecolor=black, linewidth=0.08](7.7971153,1.3006946)(6.5971155,-0.2993054)(8.997115,-0.2993054)(7.7971153,1.3006946)(7.7971153,2.9006946)(7.7971153,2.9006946)
\psdots[linecolor=black, dotsize=0.4](12.997115,-3.4993055)
\psdots[linecolor=black, dotsize=0.4](18.197115,-3.4993055)
\psline[linecolor=black, linewidth=0.08](7.7971153,-3.4993055)(18.197115,-3.4993055)(18.197115,-3.4993055)
\psdots[linecolor=black, dotsize=0.4](1.3971153,-1.8993055)
\psdots[linecolor=black, dotsize=0.4](0.19711533,-3.4993055)
\psdots[linecolor=black, dotsize=0.4](2.5971153,-3.4993055)
\psline[linecolor=black, linewidth=0.08](1.3971153,-1.8993055)(0.19711533,-3.4993055)(2.5971153,-3.4993055)(1.3971153,-1.8993055)(1.3971153,-0.2993054)(1.3971153,-0.2993054)
\psdots[linecolor=black, dotsize=0.4](1.3971153,-0.2993054)
\psdots[linecolor=black, dotsize=0.4](12.997115,1.3006946)
\psdots[linecolor=black, dotsize=0.4](11.797115,-0.2993054)
\psdots[linecolor=black, dotsize=0.4](14.197115,-0.2993054)
\psline[linecolor=black, linewidth=0.08](12.997115,1.3006946)(11.797115,-0.2993054)(14.197115,-0.2993054)(12.997115,1.3006946)(12.997115,2.9006946)(12.997115,2.9006946)
\psdots[linecolor=black, dotsize=0.4](18.197115,1.3006946)
\psdots[linecolor=black, dotsize=0.4](16.997116,-0.2993054)
\psdots[linecolor=black, dotsize=0.4](19.397116,-0.2993054)
\psline[linecolor=black, linewidth=0.08](18.197115,1.3006946)(16.997116,-0.2993054)(19.397116,-0.2993054)(18.197115,1.3006946)(18.197115,2.9006946)(18.197115,2.9006946)
\psline[linecolor=black, linewidth=0.04](7.7971153,-3.4993055)(11.797115,-0.2993054)(11.797115,-0.2993054)
\psline[linecolor=black, linewidth=0.04](7.7971153,-3.4993055)(14.197115,-0.2993054)(14.197115,-0.2993054)
\psline[linecolor=black, linewidth=0.04](7.7971153,-3.4993055)(12.997115,1.3006946)
\psline[linecolor=black, linewidth=0.04](7.7971153,-3.4993055)(12.997115,2.9006946)(18.197115,-3.4993055)(12.997115,1.3006946)(12.997115,1.3006946)
\psline[linecolor=black, linewidth=0.04](14.197115,-0.2993054)(18.197115,-3.4993055)(18.197115,-3.4993055)
\psline[linecolor=black, linewidth=0.04](11.797115,-0.2993054)(18.197115,-3.4993055)(18.197115,-3.4993055)
\psline[linecolor=black, linewidth=0.04](6.5971155,-0.2993054)(12.997115,-3.4993055)(12.997115,-3.4993055)
\psline[linecolor=black, linewidth=0.04](8.997115,-0.2993054)(12.997115,-3.4993055)(12.997115,-3.4993055)
\psline[linecolor=black, linewidth=0.04](7.7971153,1.3006946)(12.997115,-3.4993055)(12.997115,-3.4993055)
\psline[linecolor=black, linewidth=0.04](12.997115,-3.4993055)(7.7971153,2.9006946)(7.7971153,2.9006946)
\psline[linecolor=black, linewidth=0.04](12.997115,-3.4993055)(16.997116,-0.2993054)(16.997116,-0.2993054)
\psline[linecolor=black, linewidth=0.04](12.997115,-3.4993055)(18.197115,1.3006946)(18.197115,1.3006946)
\psline[linecolor=black, linewidth=0.04](12.997115,-3.4993055)(18.197115,2.9006946)(18.197115,2.9006946)
\psline[linecolor=black, linewidth=0.04](12.997115,-3.4993055)(19.397116,-0.2993054)(19.397116,-0.2993054)
\psdots[linecolor=black, dotstyle=o, dotsize=0.4, fillcolor=white](18.197115,2.9006946)
\psdots[linecolor=black, dotstyle=o, dotsize=0.4, fillcolor=white](12.997115,2.9006946)
\psdots[linecolor=black, dotstyle=o, dotsize=0.4, fillcolor=white](7.7971153,2.9006946)
\psdots[linecolor=black, dotstyle=o, dotsize=0.4, fillcolor=white](7.7971153,-3.4993055)
\rput[bl](1.2371154,-5.019305){$G$}
\rput[bl](12.5971155,-5.0993056){$P_3\star G$}
\end{pspicture}
}
\end{center}
\caption{Graphs $G$ and $P_3\star G$, respectively.} \label{P3starG}
\end{figure}

%\begin{figure}
%\centering
%\begin{subfigure}{0.5\linewidth}
%\includegraphics[width=0.95\textwidth, height = 3cm]{gstar.png}
%\caption{$G$.}
%\end{subfigure}
%\begin{subfigure}{0.5\linewidth}
%\includegraphics[width=0.95\textwidth, height = 3cm]{p3starg.png}
%\caption{$P_3 \star G$.}
%\end{subfigure}
%\caption{Counterexample to Corollary~\ref{corneighbour}, if $ \gamma_{sp}(H)=m-1$.} 
%\label{P3starG}
%\end{figure}

Interestingly, the value of Corollary~\ref{corneighbour} is equal to the 
value attained for the (usual) corona product of two graphs in~\cite[Theorem 10]{Kle}. 

We end this section by determining the super domination number of the neighbourhood 
corona product of some specific graphs.
These results follow directly from Theorems~\ref{thm-2} (a)-(d) and~\ref{Firend-thm} and 
Corollary~\ref{corneighbour}.

\begin{example} Let $n,m\in\N$. 
\begin{itemize}
\item[(a)]	$\gamma_{sp}( C_{n}\star P_{2m} )=n(m+1)$.
\item[(b)]	$\gamma_{sp}( P_{2n}\star C_{4m} )=4nm+2n$.
\item[(c)]	$\gamma_{sp}( C_{2n} \star K_m  )=2nm$ for $ m \geq 2 $.
\item[(d)]	$\gamma_{sp}( C_n \star K_{2,3}  )=4n$.
\item[(e)]	$\gamma_{sp}( P_n \star F_n )=n^2+2n$.
\end{itemize}
\end{example}

\section{Super domination number of $\mathbf{r}$-gluing of two graphs}	\label{Sec:rGlue}

In this section, we give exact upper and lower bounds for the super domination 
number of $r$-gluing of two graphs.

Since for every two graphs $G_1$ and $G_2$, $G_1\cup_{K_0}G_2$ is 
their disjoint union, by Proposition~\ref{pro-disconnect}, we have the following 
result: 
$$\gamma _{sp}(G_1)+\gamma _{sp}(G_2) -0\leq  
\gamma _{sp}(G_1\cup_{K_0}G_2)
\leq \gamma _{sp}(G_1)+\gamma _{sp}(G_2).$$
Also  $G_1\cup_{K_1}G_2$ is same as the chain 
of two graphs  and by  Theorem~\ref{chain}, we have the following result:
$$\gamma _{sp}(G_1)+\gamma _{sp}(G_2) -1\leq  
\gamma _{sp}(G_1\cup_{K_1}G_2)
\leq \gamma _{sp}(G_1)+\gamma _{sp}(G_2).$$
In Theorem~\ref{G1G2Kr-thm}, we consider the $r$-gluing of two graphs,
and we generalize this result for each $r$-clique. 

\medskip

\begin{theorem}\label{G1G2Kr-thm}
Let $G_1=(V_1,E_1)$ and $G_2=(V_2,E_2)$ be two graphs with clique number at 
least $r \in \N$. Then, 
$$\gamma_{sp}(G_1)+\gamma_{sp}(G_2)-r\leq \gamma_{sp}(G_1\cup_{K_r} G_2)\leq \gamma_{sp}(G_1)+\gamma_{sp}(G_2).$$
\end{theorem}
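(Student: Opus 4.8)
The plan is to identify $G=G_1\cup_{K_r}G_2$ with the graph on $V_1\cup V_2$ in which $V_1\cap V_2=K$ is the shared $r$-clique, and to exploit the single structural fact that there are \emph{no} edges between $V_1\setminus K$ and $V_2\setminus K$. Consequently every vertex of $V_1\setminus K$ has all its neighbours inside $V_1$, and the only ``communication'' between the two sides runs through $K$. Both inequalities will be proved by transferring super dominating sets across the gluing, and in each direction the clique $K$ is exactly the source of the term $r$.

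For the \textbf{lower bound} I would start from a minimum super dominating set $S$ of $G$ and build super dominating sets $S_1,S_2$ of $G_1,G_2$ with $|S_1|+|S_2|\le|S|+r$. Set $S_1=(S\cap V_1)\cup X_2$ and $S_2=(S\cap V_2)\cup X_1$, where for each white clique vertex $u\in K\cap\overline S$ I fix one vertex $v_u$ super dominating it in $G$ and let $X_1$ (resp.\ $X_2$) collect those $u$ with $v_u\in V_1\setminus K$ (resp.\ $v_u\in V_2\setminus K$). Verifying that $S_1$ is super dominating in $G_1$ is routine: a white non-clique vertex of $V_1$ has all neighbours in $V_1$, so its $G$-super-dominator already lies in $S\cap V_1$ and works verbatim in $G_1$; a white clique vertex kept white is one whose $v_u$ lies in $V_1$, and since $N_{G_1}(v_u)\cap\overline{S_1}\subseteq N_G(v_u)\cap\overline S=\{u\}$ it remains super dominated; the remaining ``cross'' white clique vertices were moved into $S_1$. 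Counting, inclusion--exclusion gives $|S\cap V_1|+|S\cap V_2|=|S|+|S\cap K|$, while $|X_1|+|X_2|\le|K\cap\overline S|=r-|S\cap K|$ as $X_1,X_2$ are disjoint subsets of the white clique vertices; adding these yields $|S_1|+|S_2|\le|S|+r$, hence $\gamma_{sp}(G_1)+\gamma_{sp}(G_2)-r\le\gamma_{sp}(G)$.

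For the \textbf{upper bound} the naive choice $S=S_1\cup S_2$ has $|S|\le\gamma_{sp}(G_1)+\gamma_{sp}(G_2)$ (since $S_1\cap S_2\subseteq K$, one has $|S_1\cup S_2|=|S_1|+|S_2|-|S_1\cap S_2|$), but it may fail to be super dominating: if a clique vertex super dominates a \emph{non-clique} vertex on one side, it can pick up a second white neighbour on the other side. The remedy is to choose $S_1,S_2$ so that \emph{no clique vertex super dominates a vertex outside $K$}. I would obtain this from Lemma~\ref{Lem:Partition}: since $K$ is a clique, a clique vertex super dominating outside $K$ forces all of $K$ to be black, i.e.\ $K\cap\overline{S_i}=\emptyset$. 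So, among all minimum super dominating sets of $G_i$, take one maximising $|K\cap\overline{S_i}|$; if it still had a clique vertex $c$ super dominating some vertex outside $K$, I could designate $c$ as the chosen super-dominator and apply Lemma~\ref{Lem:Partition}(a) to pass to an equal-size set in which $c$ is white, increasing the number of white clique vertices --- a contradiction. With this property in force, every white non-clique vertex is super dominated by a non-clique vertex, whose neighbourhood is unchanged in $G$, so its super domination survives the gluing.

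The only vertices left to check are clique vertices white in both $S_1$ and $S_2$. If such a $w$ has a non-clique super-dominator on either side it is fine; otherwise $w$ is super dominated only from inside $K$, which (again because $K$ is a clique) forces $w$ to be the \emph{unique} white clique vertex on each side, so $K\setminus\{w\}$ is black in both $S_1$ and $S_2$. Recolouring this single $w$ black repairs $S$, and the extra cost $+1$ is absorbed by the overlap $|S_1\cap S_2\cap K|\ge r-1$, so $|S|\le\gamma_{sp}(G_1)+\gamma_{sp}(G_2)$ for $r\ge2$ (for $r=1$ no clique vertex can super dominate another, so no repair arises, consistent with Theorem~\ref{chain}). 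The step I expect to be the crux is exactly the existence of minimum super dominating sets in which no clique vertex super dominates outside $K$; once the maximising-plus-flip argument above is in place, everything else is bookkeeping on the partition of $V(G)$ induced by $V_1\cap V_2=K$ through Lemma~\ref{Lem:Partition}.
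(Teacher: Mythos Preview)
Your argument is correct, and both directions are organized differently from the paper.

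For the lower bound the paper first invokes Lemma~\ref{Lem:Partition} to replace $S$ by a minimum super dominating set in which either all clique vertices are black or exactly one is white and is super dominated from inside the clique, and then treats these two cases separately. Your construction with $X_1,X_2$ bypasses that reduction entirely: by sending each white clique vertex to the ``wrong'' side only when its fixed super-dominator lies strictly on the other side, you keep $\overline{S_i}\subseteq\overline S$ and the verification is uniform. The inclusion--exclusion count $|S\cap V_1|+|S\cap V_2|=|S|+|S\cap K|$ together with $|X_1|+|X_2|\le r-|S\cap K|$ is clean and avoids all case analysis; this is arguably more elegant than the paper's proof.

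For the upper bound the two proofs again diverge. The paper normalises each $S_i$ (via Lemma~\ref{Lem:Partition}) so that at most one clique vertex is white and, if so, it is super dominated by another clique vertex; it then runs through three cases according to how many of $V',V''$ are fully black, building $S$ by hand in each. Your normalisation is different: you choose $S_i$ maximising $|K\cap\overline{S_i}|$ and deduce that no clique vertex super dominates outside $K$. The only possible failure in $S_1\cup S_2$ is then a single clique vertex $w$ white on both sides with all super-dominators inside $K$, which forces $K\setminus\{w\}\subseteq S_1\cap S_2$ and is repaired by adding $w$, the $+1$ being absorbed by the overlap $|S_1\cap S_2|\ge r-1$. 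This is a tidy alternative to the paper's three-way case split. One small point worth making explicit: the statement of Lemma~\ref{Lem:Partition}(a) only asserts existence of $S'$, not that you may designate a particular vertex (your $c$) as a chosen super-dominator; the freedom you use is clear from the \emph{proof} of the lemma, so just cite that.
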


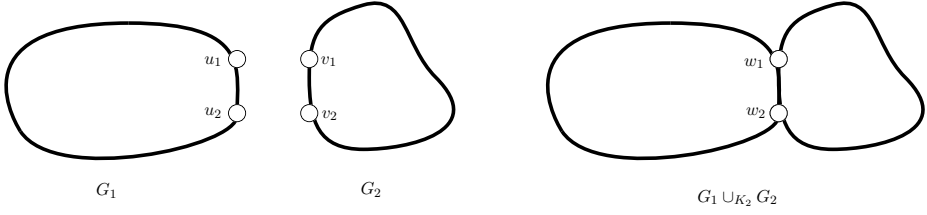
\begin{figure}
\begin{center}
\psscalebox{0.6 0.6}
{
\begin{pspicture}(0,-5.784467)(20.414343,-1.2334015)
\psbezier[linecolor=black, linewidth=0.08](5.1660495,-3.78161)(5.1660495,-4.4673243)(1.1660494,-5.324467)(0.36604938,-4.124467163085938)
\psbezier[linecolor=black, linewidth=0.08](0.36604938,-4.1244674)(0.36604938,-4.1244674)(-1.2339506,-1.7244672)(2.7660494,-1.7244671630859374)
\psbezier[linecolor=black, linewidth=0.08](2.7660494,-1.7244672)(5.5089064,-1.7244672)(5.1660495,-2.7530386)(5.1660495,-3.781610020228795)
\psbezier[linecolor=black, linewidth=0.08](9.56605,-2.924467)(10.766049,-4.1244674)(8.966049,-4.524467)(7.966049,-4.524467163085937)(6.966049,-4.524467)(6.7660494,-3.924467)(6.7660494,-2.924467)(6.7660494,-1.9244672)(7.0045257,-1.5991883)(7.966049,-1.3244672)(8.927573,-1.049746)(8.858943,-2.2173605)(9.56605,-2.924467)
\psdots[linecolor=black, dotstyle=o, dotsize=0.4, fillcolor=white](5.1660495,-2.5244672)
\psdots[linecolor=black, dotstyle=o, dotsize=0.4, fillcolor=white](5.1660495,-3.7244673)
\psdots[linecolor=black, dotstyle=o, dotsize=0.4, fillcolor=white](6.7660494,-2.5244672)
\psdots[linecolor=black, dotstyle=o, dotsize=0.4, fillcolor=white](6.7660494,-3.7244673)
\psbezier[linecolor=black, linewidth=0.08](19.96605,-2.924467)(21.16605,-4.1244674)(19.366049,-4.524467)(18.366049,-4.524467163085937)(17.366049,-4.524467)(17.16605,-3.924467)(17.16605,-2.924467)(17.16605,-1.9244672)(17.404526,-1.5991883)(18.366049,-1.3244672)(19.327574,-1.049746)(19.258942,-2.2173605)(19.96605,-2.924467)
\psdots[linecolor=black, dotstyle=o, dotsize=0.4, fillcolor=white](17.16605,-2.5244672)
\psdots[linecolor=black, dotstyle=o, dotsize=0.4, fillcolor=white](17.16605,-3.7244673)
\psdots[linecolor=black, dotstyle=o, dotsize=0.4, fillcolor=white](17.16605,-2.5244672)
\psdots[linecolor=black, dotstyle=o, dotsize=0.4, fillcolor=white](17.16605,-3.7244673)
\rput[bl](4.446049,-2.684467){$u_1$}
\rput[bl](4.446049,-3.8244672){$u_2$}
\rput[bl](7.0260496,-2.684467){$v_1$}
\rput[bl](7.0460496,-3.8844671){$v_2$}
\rput[bl](16.40605,-2.7244673){$w_1$}
\rput[bl](16.446049,-3.8244672){$w_2$}
\rput[bl](2.0460494,-5.6044674){$G_1$}
\rput[bl](7.9060493,-5.584467){$G_2$}
\rput[bl](15.36605,-5.784467){$G_1\cup_{K_2}G_2$}
\psbezier[linecolor=black, linewidth=0.08](17.16605,-3.78161)(17.16605,-4.4673243)(13.166049,-5.324467)(12.36605,-4.124467163085938)
\psbezier[linecolor=black, linewidth=0.08](12.36605,-4.1244674)(12.36605,-4.1244674)(10.766049,-1.7244672)(14.766049,-1.7244671630859374)
\psbezier[linecolor=black, linewidth=0.08](14.766049,-1.7244672)(17.508907,-1.7244672)(17.16605,-2.7530386)(17.16605,-3.781610020228795)
\psdots[linecolor=black, dotstyle=o, dotsize=0.4, fillcolor=white](17.16605,-2.5244672)
\psdots[linecolor=black, dotstyle=o, dotsize=0.4, fillcolor=white](17.16605,-3.7244673)
\end{pspicture}
}
\end{center}
\caption{Graphs $G_1$, $G_2$ and $G_1\cup_{K_2}G_2$, respectively.} \label{G1G2K2}
\end{figure}

%\begin{figure}
%\begin{subfigure}{0.33\linewidth}
%\includegraphics[width=0.95\textwidth, height = 3cm]{g1union.png}
%\caption{$G_1$.}
%\end{subfigure}
%\begin{subfigure}{0.33\linewidth}
%\includegraphics[width=0.95\textwidth, height = 3cm]{g2union.png}
%\caption{$G_2$.}
%\end{subfigure}
%\begin{subfigure}{0.33\linewidth}
%\includegraphics[width=0.95\textwidth, height = 3cm]{g1uniong2.png}
%\caption{$G_1\cup_{K_2}G_2$.}
%\end{subfigure}
%\caption{Example for $2$-gluing of two graphs $G_1$ and $G_2$.}
%\label{G1G2K2}
%\end{figure}

\begin{proof}
Let $S_1$ and $S_2$ be two minimum size super dominating sets 
for  $G_1$ and $G_2$, respectively. Let the vertex sets $V'=\{v'_i\mid 
1\leq i\leq r\}$ and $V''=\{v''_i\mid 1\leq i\leq r\}$ form $r$-cliques in $G_1$ 
and $G_2$, respectively. Furthermore, 
let us create a vertex $w_i$ by identifying 
the vertices $v'_i$ and $v''_i$ for each $i\in \{1,2,\dots, r\}$ (see Figure~\ref{G1G2K2} for 
the case $r=2$) and denote $W=\{w_i\mid 1\leq i\leq r\}$. Denote 
by $G$ the graph which we obtain in this way. When $r=1$, the claim follows from 
Theorem~\ref{chain}. Hence, we assume from now on that $r\geq2$. 
We divide the proof into two parts, namely the lower and the upper bound.

\noindent
{\bf Lower bound} $\mathbf{\gamma_{sp}(G_1)+\gamma_{sp}(G_2)-r\leq 
\gamma_{sp}(G_1\cup_{K_r} G_2)}$.
\medskip
%\leq \gamma_{sp}(G_1)+\gamma_{sp}(G_2).$$

%Let us then consider  the lower bound for $\gamma _{sp}(G_1\cup_{K_r}G_2)$. 

\noindent We use Lemma~\ref{Lem:Partition} to show that we can assume two
cases for the minimum size super dominating set $S$:

\begin{itemize}
\item $ W \subseteq S $,
\item $ |W \cap S| = r-1$ and one vertex of $W$, say $w_1$, super dominates the 
missing vertex, say $w_r$.
\end{itemize}
To show this, assume that $ |S \cap W| \leq r-1$ and, without loss of generality, 
$ w_r \notin S $. By Lemma~\ref{Lem:Partition}, 
there is another minimum size super dominating set $S'$, the corresponding 
set $D = S \cap S' $ and
a bijective function $ f: \overline{S'} \to \overline{S} $ with the mentioned
characteristics.
If no vertex of $W \cap S $ super dominates another vertex,
then $ W \cap S \subseteq D $ follows, and we can replace $ S $
by $ S' $, where $ W \subseteq S' $ holds.
On the other hand, if 
$ W \nsubseteq S $ and
one vertex of $ W \cap S $ super dominates another vertex, 
then $ |W \cap S| = r-1$ and, 
%one vertex 
without loss of generality,
$w_1$ super dominates the missing vertex $w_r$. 
This finishes the proof of this assumption.
We split our considerations into three subcases.

Here we split our considerations into two subcases.
\begin{itemize}
\item[(i)] $W\subseteq {S}$. 

Let
$$S_1=\left( S\cap V_1 \right) \cup V'$$
and
$$S_2=\left( S\cap V_2 \right) \cup V''.$$

As $ \overline{S_i}\subseteq \overline{S}$, if $v\in\overline{S_i}$ is super 
dominated by some vertex $u$ in $S$, then it is super dominated by a vertex corresponding to 
$u$ in $S_i$.
%As any vertex in $\overline{S_i}$, for $i\in\{1,2\}$, is super 
%dominated by the corresponding vertex,
Thus, $S_1$ is a super dominating set for $G_1$, and $S_2$ is a super dominating set 
for $G_2$. 

As we replace $r$ vertices from $S$ 
by $2r$ other vertices to reach $S_1 \cup S_2 $, 
it follows that $|S_1|+|S_2| -r\leq \gamma_{sp}(G_1\cup_{K_r} G_2)$.

\item[(ii)] $ | W \cap S | = r-1$.

Let
$$S_1=(S\cap V_1)\cup (V' \setminus \{v'_r\})$$ 
and
$$S_2=(S\cap V_2)\cup (V''\setminus \{v''_r\}).$$

The vertex $v'_r$ is super dominated by $v'_1$, and $v''_r$ is super dominated by 
$v''_1$. 
As $ \overline{S_i}\setminus\{v'_r,v''_r\}\subseteq \overline{S}$, if any other vertex 
$v\in\overline{S_i}$ is super 
dominated by some vertex $u$ in $S$, 
then it is super dominated by a vertex corresponding to 
$u$ in $S_i$.
%As any other vertex in $\overline{S_i}$, for $i\in\{1,2\}$, is super 
%dominated by the corresponding vertex, $S_1$ is a super dominating set 
%for $G_1$, and $S_2$ is a super dominating set for $G_2$. 

As we replace $r-1$ vertices from $S$ 
by $2r-2$ other vertices to reach $S_1 \cup S_2 $, 
it follows that $|S_1|+|S_2| -r\leq \gamma_{sp}(G_1\cup_{K_r} G_2)$.

%$W' \setminus \{w_r\}\subseteq {S}$ and $w_r \in \overline{S}$. Recall that we can with 
%Lemma~\ref{Lem:Partition} either swap the vertices in $S$ so that there are no 
%vertices left in $\overline{S}\cap W'$ (this is considered in case (ii)) or we 
%have (using notation of Lemma~\ref{Lem:Partition}) $|D_1\cap W'|=1$ and 
%$|D_2\cap W'|=1$. Moreover, if the latter is the case, then we may assume that 
%$D_1\cap W'=\{w_1\}$ and $D_2\cap W'=\{w_r\}$. 
%Let us now consider 
%Hence
%$$\gamma _{sp}(G_1)+\gamma _{sp}(G_2) \leq \gamma _{sp}(G_1\cup_{K_r}G_2)+r .$$
\end{itemize}
\medskip

\medskip

\noindent
{\bf Upper bound} $\mathbf{\gamma_{sp}(G_1\cup_{K_r} G_2)\leq 
\gamma_{sp}(G_1)+\gamma_{sp}(G_2)}$.

\medskip

\noindent
As in the proof of the lower bound, 
by Lemma~\ref{Lem:Partition} we can assume two
cases for the minimum size super dominating set $S_1$:
\begin{itemize}
\item $ V' \subseteq S_1 $,
\item $ |V' \cap S_1| = r-1$ and one vertex of $V'$, say $v'_1$, super dominates the 
missing vertex, say $v'_r$.
\end{itemize}

(By symmetry, this holds analogously for $S_2$ and $V''$.)

\begin{itemize}
\item[(i)] $V'\subseteq S_1$ and $V''\subseteq S_2$. 

If for $i\in\{1,2,\dots,r\}$ we have $|N(v'_i)\cap \overline{S_1}|=1$, then we denote 
$\{x_i\}=N(v'_i)\cap \overline{S_1}$. If $|N(v'_i)\cap \overline{S_1}|\neq1$ 
but $|N(v''_i)\cap \overline{S_2}|=1$, then  we denote $\{x_i\}=N(v''_i)\cap 
\overline{S_2}$. Denote $X=\{x_i\mid 1\leq i\leq r, \; \; x_i\ \mbox{exists}\}$. 
(Notice that the vertices $x_i$ are included in $X$ only if they exist.) 

Let $$S=(S_1\cup S_2\cup W\cup X) \setminus (V'\cup V'').$$

%Clearly, all vertices from $ S \setminus W $ super dominate the corresponding
%vertices as $S_1 $ or $ S_2 $.
Clearly, if a vertex $u\in\overline{S}$ was super dominated by a vertex $v$ in $S_1\setminus V'$ or 
in $S_2\setminus V''$, then it is now super dominated by $v\in S\setminus W$.
%The vertices from $X$ are not needed to super dominate any vertices.
Let us then assume that the 
vertex $u\in\overline{S}$ was super dominated by a vertex $v$ in $S_1\cap V'$ or in $S_2\cap V''$. Since $u\not\in X$, we have $v\in S_2\cap V''$. 
We may assume that $v=v''_h$ for some $h$ with $ 1 \leq h \leq r $. 
Thus, $x_h\in X \subseteq S $ and $N_G(w_h)\cap\overline{S}=\{u\}$.  
Thus, $S$ is a super dominating set of $G$. 

%Assume that there exists a vertex $v_1\in \overline{S}$ 
%such that there does not exist any $v_2\in S$ with $N(v_2)\cap 
%\overline{S}=\{v_1\}$. Then $v_1\in \overline{S_i}$ for $i\in\{1,2\}$ and there 
%exists a $v_2\in S_i$ such that $N(v_2)\cap \overline{S_i}=\{v_1\}$. Thus, 
%$w \in W $ exists with $N(w)\cap V_j\neq\emptyset$ for each $j\in \{1,2\}$. However, since 
%$X\subseteq S$, we have $N(w)\cap \overline{S}=\{v_1\}$, this is a contradiction. 

As we replace $2r$ vertices from $S_1 \cup S_2 $ 
by at most $2r$ other vertices to reach $S$, it follows that $|S|\leq\gamma_{sp}(G_1)+\gamma_{sp}(G_2)$, 
as claimed.

\item[(ii)] Either $|S_1 \cap V'| = r-1 $ or $ |S_2 \cap V''| = r-1 $.
%$V'\subseteq S_1$, $V''\setminus \{v''_r\}\subseteq S_2$, $v''_r\in 

%\overline{S_2}$ or $V''\subseteq S_2$, 

%$V' \setminus \{v'_r\}\subseteq S_1$, $v'_r\in \overline{S_1}$.  

Notice that the two cases are essentially identical and we may 
assume, without loss of generality, that 
$ |S_2 \cap V''| = r-1 $.
%$V'\subseteq S_1$, $V'' \setminus \{v'_r\}
%\subseteq S_2$ and $v''_r\in \overline{S_2}$. 

As we have mentioned above, we can assume that $v''_1 $ super dominates $v''_r$ in $S_2$.
Let us have $x_i\in N(v'_i)\cap \overline{S_1}$ for 
each $2 \leq i \leq r $  (if such vertex exists). 
Define $ X = \{x_i\mid 2\leq i\leq r, \; \; x_i\  \mbox{exists} \} $.

Let $$S=(S_1\cup S_2\cup W  \cup X)
\setminus (V'\cup V'').$$ 

%Again, all vertices from $ S \setminus W $ super dominate the corresponding
%vertices and no vertices from $X$ are needed to super dominate any vertices.
Again, if a vertex $u\in \overline{S}$ was super dominated by a vertex $v$ in $S_1\setminus V'$ or in 
$S_2\setminus V''$, then it is now super dominated by $v\in S\setminus W$. If a 
vertex $u\in\overline{S}$ was previously super dominated by a vertex $v$ in $V'$ 
or in $V''$, then $v=v'_1$ since $u\not\in W\cup X$ and vertices in $V''$ could 
super dominate only the vertex $v''_r$. 
Since $v''_1$ super dominates $v''_r$, the 
vertex $w_1$ super dominates $u$. Thus, $S$ is a super dominating set of $G$. 

%$S$ is a super dominating set of $G$ because of the following reasoning.
%Assume that there exists a vertex $v_1\in \overline{S}$ such that there 
%does not exist any $v_2\in S$ with $N(v_2)\cap \overline{S}=\{v_1\}$. Then $v_1$ must 
%be dominated by $v'_i$ in $S_1$ for some $i\in\{1,2,\dots,r\}$. However, if $i\geq2$, 
%then $v_2=x\in S$, a contradiction, and if $i=1$, then $N(w_1)\cap 
%\overline{S}=\{v_2\}$ since $N(w_1)\cap \overline{S}\cap V_2=\emptyset$. 

As we replace $2r-1$ vertices from $S_1 \cup S_2 $ 
by at most $2r-1$ other 
vertices to reach $S$, it follows that $|S|\leq\gamma_{sp}(G_1)+\gamma_{sp}(G_2)$. 

%Since $v'_r\not\in S_2$, we cannot use Lemma~\ref{Lem:Partition} to have 
%$V'\subseteq S_2$. Hence, using the notation of Lemma~\ref{Lem:Partition}, we 
%have $v_i\in D_1$ for some $i\in\{1,2,\dots,r-1\}$ and $v_r\in D_2$. We may 
%%assume that $i=1$. Thus, $N(v_1)\cap \overline{S_2}=\{v_r\}$. 
%Moreover, let us %have $u'_i\in N(u_i)\cap \overline{S_1}$ for 
%each $i\in\{2,\dots,r\}$  (if such %vertex exists). W
%is super dominating in $G$. Since each vertex in 
%Moreover, $S$ clearly has the claimed cardinality.

\item[(iii)] $|S_1 \cap V'| = r-1 $ and $ |S_2 \cap V''| = r-1 $.

%\item[(iii)] $V' \setminus \{v'_r\}\subseteq S_1$, $v'_r\in \overline{S_1}$, 
%$V'' \setminus \{v''_h\}\subseteq S_2$ and $v''_h\in \overline{S_2}$. 

We can assume, without loss of generality, that 
$v'_1 $ super dominates $v'_r$ in $S_1$ and
$v''_h $ super dominates $v''_p$ in $S_2$, where $ 1 \leq h < p \leq r $.

%Since $u_r\not\in S_1$ 
%and $v_h\not\in S_2$, we cannot use Lemma~\ref{Lem:Partition} to have  
%$U'\subseteq S_1$ or $V'\subseteq S_2$. If we reconsider the partition in 
%Lemma~\ref{Lem:Partition}, then we notice that, using the notation of the lemma, 
%we have $|D_1\cap U'|=|D_2\cap U'|=1$ and the same result holds for $V'$. 
%Indeed, we have at least one vertex in both of these intersections and we cannot 
%have more than one vertex since each vertex in $D_1$ is adjacent to exactly one 
%vertex in $D_2$ and vice versa. Hence, $S_1 \setminus U'$ is super dominating in 
%$G_1 \setminus U'$ 
%and $S_2 \setminus V'$ is super dominating in $G_2 \setminus V'$.  
%Thus, as in the previous case, we may assume that $N(u_1)\cap 
%\overline{S_1}=\{u_r\}$. Moreover, we may assume that %$h\neq 1$ (if $h=1$, 
%then we can use Lemma~\ref{Lem:Partition} to swap the vertex not in the super 
%dominating set away from $v_1$) and 
%$N(v_j)\cap \overline{S_2}=\{v_h\}$ for some $j\neq h$ such that $j\in \{1,2,\dots,r\}$. 
Let $$S=(S_1\cup S_2\cup W) \setminus (V'\cup V'').$$ 

%Again, all vertices from $ S \setminus W $ super dominate the corresponding vertices. 

In this case, no vertex in $V'$ or in $V''$ super dominates any vertex in $ \overline{S_1} 
\setminus V'$ or $ \overline{S_2} \setminus V'' $, 
respectively. Furthermore, $\overline{S} \subseteq \overline{S_1}\cup\overline{S_2}$. Thus, if $u\in \overline{S}$, then $u\in \overline{S_1}\cup\overline{S_2}$ and $u$ is super dominated by a vertex in $S_1$ or in $S_2$. That same vertex is in $S$ and super dominates $u$ in $S$. Thus, $S$ is a super dominating set of $G$. 

As we replace $2r-2$ vertices from $S_1 \cup S_2 $ 
by $r \geq 2 $ other vertices to reach $S$, it follows that $|S|\leq\gamma_{sp}(G_1)+\gamma_{sp}(G_2)$. 

%Since each  vertex in $\overline{S}$ is super dominated by the same vertices as 
%previously (and possibly also by some new vertices in $W'$), $S$ is super 
%dominating in $G$ and its cardinality clearly satisfies the upper bound.
\end{itemize}

\pagebreak[3]

This finishes the proof. 
\qed
\end{proof}

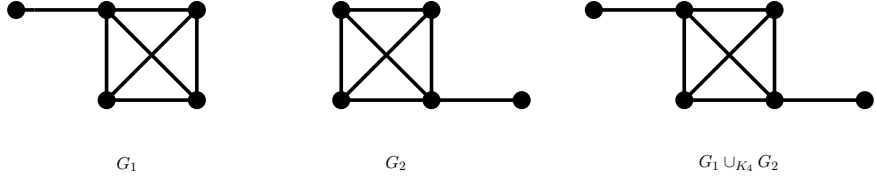
\begin{figure}
\begin{center}
\psscalebox{0.6 0.6}
{
\begin{pspicture}(0,-10.891442)(19.194231,-7.114327)
\rput[bl](2.4171154,-10.891442){$G_1$}
\rput[bl](8.377115,-10.871442){$G_2$}
\psline[linecolor=black, linewidth=0.08](2.1971154,-7.3114424)(4.1971154,-7.3114424)
\psline[linecolor=black, linewidth=0.08](2.1971154,-7.3114424)(4.1971154,-9.311442)(2.1971154,-9.311442)(2.1971154,-7.3114424)(2.1971154,-7.3114424)
\psline[linecolor=black, linewidth=0.08](2.1971154,-9.311442)(4.1971154,-7.3114424)(4.1971154,-9.311442)(4.1971154,-9.311442)
\psline[linecolor=black, linewidth=0.08](2.1971154,-7.3114424)(0.5971154,-7.3114424)(0.5971154,-7.3114424)
\psdots[linecolor=black, dotsize=0.4](0.1971154,-7.3114424)
\psdots[linecolor=black, dotsize=0.4](2.1971154,-7.3114424)
\psdots[linecolor=black, dotsize=0.4](4.1971154,-7.3114424)
\psdots[linecolor=black, dotsize=0.4](4.1971154,-9.311442)
\psdots[linecolor=black, dotsize=0.4](2.1971154,-9.311442)
\psline[linecolor=black, linewidth=0.08](0.1971154,-7.3114424)(0.5971154,-7.3114424)(0.5971154,-7.3114424)
\psline[linecolor=black, linewidth=0.08](7.397115,-7.3114424)(9.397116,-7.3114424)
\psline[linecolor=black, linewidth=0.08](7.397115,-7.3114424)(9.397116,-9.311442)(7.397115,-9.311442)(7.397115,-7.3114424)(7.397115,-7.3114424)
\psline[linecolor=black, linewidth=0.08](7.397115,-9.311442)(9.397116,-7.3114424)(9.397116,-9.311442)(9.397116,-9.311442)
\psdots[linecolor=black, dotsize=0.4](7.397115,-7.3114424)
\psdots[linecolor=black, dotsize=0.4](9.397116,-7.3114424)
\psdots[linecolor=black, dotsize=0.4](9.397116,-9.311442)
\psdots[linecolor=black, dotsize=0.4](7.397115,-9.311442)
\psline[linecolor=black, linewidth=0.08](14.997115,-7.3114424)(16.997116,-7.3114424)
\psline[linecolor=black, linewidth=0.08](14.997115,-7.3114424)(16.997116,-9.311442)(14.997115,-9.311442)(14.997115,-7.3114424)(14.997115,-7.3114424)
\psline[linecolor=black, linewidth=0.08](14.997115,-9.311442)(16.997116,-7.3114424)(16.997116,-9.311442)(16.997116,-9.311442)
\psline[linecolor=black, linewidth=0.08](14.997115,-7.3114424)(13.397116,-7.3114424)(13.397116,-7.3114424)
\psdots[linecolor=black, dotsize=0.4](12.997115,-7.3114424)
\psdots[linecolor=black, dotsize=0.4](14.997115,-7.3114424)
\psdots[linecolor=black, dotsize=0.4](16.997116,-7.3114424)
\psdots[linecolor=black, dotsize=0.4](16.997116,-9.311442)
\psdots[linecolor=black, dotsize=0.4](14.997115,-9.311442)
\psline[linecolor=black, linewidth=0.08](12.997115,-7.3114424)(13.397116,-7.3114424)(13.397116,-7.3114424)
\psline[linecolor=black, linewidth=0.08](9.397116,-9.311442)(11.397116,-9.311442)(11.397116,-9.311442)
\psline[linecolor=black, linewidth=0.08](16.997116,-9.311442)(18.997116,-9.311442)(18.997116,-9.311442)
\psdots[linecolor=black, dotsize=0.4](11.397116,-9.311442)
\psdots[linecolor=black, dotsize=0.4](18.997116,-9.311442)
\rput[bl](15.317116,-10.891442){$G_1\cup_{K_4}G_2$}
\end{pspicture}
}
\end{center}
\caption{$G_1$, $G_2$ and $G_1\cup_{K_4}G_2$, respectively.} \label{GGUK4}
\end{figure}

%\begin{figure}
%\begin{subfigure}{0.33\linewidth}
%\includegraphics[width=0.95\textwidth, height = 3cm]{g1union4.png}
%\caption{$G_1$.}
%\end{subfigure}
%\begin{subfigure}{0.33\linewidth}
%\includegraphics[width=0.95\textwidth, height = 3cm]{g2union4.png}
%\caption{$G_2$.}
%\end{subfigure}
%\begin{subfigure}{0.33\linewidth}
%\includegraphics[width=0.95\textwidth, height = 3cm]{g1union4g2.png}
%\caption{$G_1\cup_{K_4}G_2$.}
%\end{subfigure}
%\caption{Example for $4$-gluing of two graphs $G_1$ and $G_2$.}
%\label{GGUK4}
%\end{figure}

In the following remark, we show that the lower bound in 
Theorem~\ref{G1G2Kr-thm} is tight:

\begin{remark}
Let $r\geq 2$ and consider an $(r+1)$-vertex graph $G$ which is formed from the  
complete graph $K_r$ by attaching a single leaf to one of the vertices in $K_r$. 
We have $\gamma_{sp}(G)=r$. Let $G'$  be the graph  with two  
leaves attached to different vertices obtained by $r$-gluing $G\cup_{K_r}G$ (see Figure~\ref{GGUK4}). One  
can easily check that $\gamma_{sp}(G')=r=\gamma_{sp}(G)+\gamma_{sp}(G)-r$. So,  
the lower bound in Theorem~\ref{G1G2Kr-thm} is tight.
\end{remark}

We finish this section by showing that also the upper bound in 
Theorem~\ref{G1G2Kr-thm} is tight:

\begin{remark}  
Suppose that $G_1=G_2=G$ where G is formed from $K_r$ by attaching two leaves to 
each vertex in the $r$-clique (see Figure~\ref{GGUK4-lowersupport}). 
Then one can easily check that $\gamma _{sp}(G)=2r$ and $\gamma 
_{sp}(G_1\cup_{K_r}G_2)=4r=\gamma _{sp}(G_1)+\gamma _{sp}(G_2)$. So, the upper 
bound in Theorem~\ref{G1G2Kr-thm} is tight.
\end{remark}

\begin{figure}
\begin{center}
\psscalebox{0.6 0.6}
{
\begin{pspicture}(0,-10.931442)(16.794231,-3.0743268)
\psline[linecolor=black, linewidth=0.08](2.1971154,-5.2714424)(4.1971154,-5.2714424)
\psline[linecolor=black, linewidth=0.08](2.1971154,-5.2714424)(4.1971154,-7.2714424)(2.1971154,-7.2714424)(2.1971154,-5.2714424)(2.1971154,-5.2714424)
\psline[linecolor=black, linewidth=0.08](2.1971154,-7.2714424)(4.1971154,-5.2714424)(4.1971154,-7.2714424)(4.1971154,-7.2714424)
\psline[linecolor=black, linewidth=0.08](2.1971154,-5.2714424)(0.5971154,-5.2714424)(0.5971154,-5.2714424)
\psdots[linecolor=black, dotsize=0.4](0.1971154,-5.2714424)
\psdots[linecolor=black, dotsize=0.4](2.1971154,-5.2714424)
\psdots[linecolor=black, dotsize=0.4](4.1971154,-5.2714424)
\psdots[linecolor=black, dotsize=0.4](4.1971154,-7.2714424)
\psdots[linecolor=black, dotsize=0.4](2.1971154,-7.2714424)
\psline[linecolor=black, linewidth=0.08](0.1971154,-5.2714424)(0.5971154,-5.2714424)(0.5971154,-5.2714424)
\rput[bl](12.737116,-10.931442){$G_1\cup_{K_4}G_2$}
\psline[linecolor=black, linewidth=0.08](2.1971154,-5.2714424)(2.1971154,-3.2714422)(2.1971154,-3.2714422)
\psline[linecolor=black, linewidth=0.08](4.1971154,-5.2714424)(4.1971154,-3.2714422)(4.1971154,-3.2714422)
\psline[linecolor=black, linewidth=0.08](4.1971154,-5.2714424)(6.1971154,-5.2714424)(6.1971154,-5.2714424)
\psline[linecolor=black, linewidth=0.08](4.1971154,-7.2714424)(6.1971154,-7.2714424)(6.1971154,-7.2714424)
\psline[linecolor=black, linewidth=0.08](4.1971154,-7.2714424)(4.1971154,-9.271442)(4.1971154,-9.271442)
\psline[linecolor=black, linewidth=0.08](2.1971154,-7.2714424)(2.1971154,-9.271442)(2.1971154,-9.271442)
\psline[linecolor=black, linewidth=0.08](2.1971154,-7.2714424)(0.1971154,-7.2714424)(0.1971154,-7.2714424)
\psdots[linecolor=black, dotsize=0.4](2.1971154,-3.2714422)
\psdots[linecolor=black, dotsize=0.4](4.1971154,-3.2714422)
\psdots[linecolor=black, dotsize=0.4](6.1971154,-5.2714424)
\psdots[linecolor=black, dotsize=0.4](6.1971154,-7.2714424)
\psdots[linecolor=black, dotsize=0.4](4.1971154,-9.271442)
\psdots[linecolor=black, dotsize=0.4](2.1971154,-9.271442)
\psdots[linecolor=black, dotsize=0.4](0.1971154,-7.2714424)
\rput[bl](2.5771153,-10.831442){$G_1=G_2$}
\psline[linecolor=black, linewidth=0.08](12.5971155,-5.2714424)(14.5971155,-5.2714424)
\psline[linecolor=black, linewidth=0.08](12.5971155,-5.2714424)(14.5971155,-7.2714424)(12.5971155,-7.2714424)(12.5971155,-5.2714424)(12.5971155,-5.2714424)
\psline[linecolor=black, linewidth=0.08](12.5971155,-7.2714424)(14.5971155,-5.2714424)(14.5971155,-7.2714424)(14.5971155,-7.2714424)
\psline[linecolor=black, linewidth=0.08](12.5971155,-5.2714424)(10.997115,-5.2714424)(10.997115,-5.2714424)
\psdots[linecolor=black, dotsize=0.4](10.5971155,-5.2714424)
\psdots[linecolor=black, dotsize=0.4](12.5971155,-5.2714424)
\psdots[linecolor=black, dotsize=0.4](14.5971155,-5.2714424)
\psdots[linecolor=black, dotsize=0.4](14.5971155,-7.2714424)
\psdots[linecolor=black, dotsize=0.4](12.5971155,-7.2714424)
\psline[linecolor=black, linewidth=0.08](10.5971155,-5.2714424)(10.997115,-5.2714424)(10.997115,-5.2714424)
\psline[linecolor=black, linewidth=0.08](12.5971155,-5.2714424)(12.5971155,-3.2714422)(12.5971155,-3.2714422)
\psline[linecolor=black, linewidth=0.08](14.5971155,-5.2714424)(14.5971155,-3.2714422)(14.5971155,-3.2714422)
\psline[linecolor=black, linewidth=0.08](14.5971155,-5.2714424)(16.597115,-5.2714424)(16.597115,-5.2714424)
\psline[linecolor=black, linewidth=0.08](14.5971155,-7.2714424)(16.597115,-7.2714424)(16.597115,-7.2714424)
\psline[linecolor=black, linewidth=0.08](14.5971155,-7.2714424)(14.5971155,-9.271442)(14.5971155,-9.271442)
\psline[linecolor=black, linewidth=0.08](12.5971155,-7.2714424)(12.5971155,-9.271442)(12.5971155,-9.271442)
\psline[linecolor=black, linewidth=0.08](12.5971155,-7.2714424)(10.5971155,-7.2714424)(10.5971155,-7.2714424)
\psdots[linecolor=black, dotsize=0.4](12.5971155,-3.2714422)
\psdots[linecolor=black, dotsize=0.4](14.5971155,-3.2714422)
\psdots[linecolor=black, dotsize=0.4](16.597115,-5.2714424)
\psdots[linecolor=black, dotsize=0.4](16.597115,-7.2714424)
\psdots[linecolor=black, dotsize=0.4](14.5971155,-9.271442)
\psdots[linecolor=black, dotsize=0.4](12.5971155,-9.271442)
\psdots[linecolor=black, dotsize=0.4](10.5971155,-7.2714424)
\psline[linecolor=black, linewidth=0.08](12.5971155,-7.2714424)(10.5971155,-8.471442)(10.5971155,-8.471442)
\psline[linecolor=black, linewidth=0.08](12.5971155,-7.2714424)(11.397116,-9.271442)(11.397116,-9.271442)
\psline[linecolor=black, linewidth=0.08](12.5971155,-5.2714424)(10.5971155,-4.071442)(10.5971155,-4.071442)
\psline[linecolor=black, linewidth=0.08](12.5971155,-5.2714424)(11.397116,-3.2714422)(11.397116,-3.2714422)
\psline[linecolor=black, linewidth=0.08](14.5971155,-5.2714424)(15.797115,-3.2714422)(15.797115,-3.2714422)
\psline[linecolor=black, linewidth=0.08](14.5971155,-5.2714424)(16.597115,-4.071442)(16.597115,-4.071442)
\psline[linecolor=black, linewidth=0.08](14.5971155,-7.2714424)(16.597115,-8.471442)(16.597115,-8.471442)
\psline[linecolor=black, linewidth=0.08](14.5971155,-7.2714424)(15.797115,-9.271442)(15.797115,-9.271442)(15.797115,-9.271442)
\psdots[linecolor=black, dotsize=0.4](10.5971155,-4.071442)
\psdots[linecolor=black, dotsize=0.4](11.397116,-3.2714422)
\psdots[linecolor=black, dotsize=0.4](15.797115,-3.2714422)
\psdots[linecolor=black, dotsize=0.4](16.597115,-4.071442)
\psdots[linecolor=black, dotsize=0.4](16.597115,-8.471442)
\psdots[linecolor=black, dotsize=0.4](15.797115,-9.271442)
\psdots[linecolor=black, dotsize=0.4](11.397116,-9.271442)
\psdots[linecolor=black, dotsize=0.4](10.5971155,-8.471442)
\end{pspicture}
}
\end{center}
\caption{$G_1=G_2$ and $G_1\cup_{K_4}G_2$, respectively.} \label{GGUK4-lowersupport}
\end{figure}
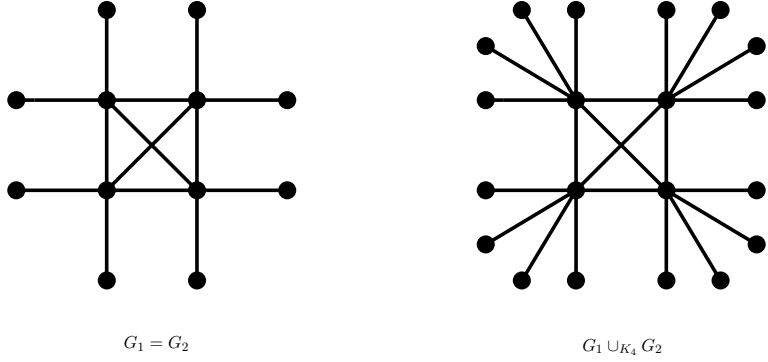

%\begin{figure}
%\begin{subfigure}{0.5\linewidth}
%\includegraphics[width=0.95\textwidth, height = 3cm]{g1equalg2.png}
%\caption{$G_1=G_2$.}
%\end{subfigure}
%\begin{subfigure}{0.5\linewidth}
%\includegraphics[width=0.95\textwidth, height = 3cm]{g1equalg2union4.png}
%\caption{$G_1\cup_{K_4}G_2$.}
%\end{subfigure}
%\caption{Example for $4$-gluing a graph with itself.}
%\label{GGUK4-lowersupport}
%\end{figure}

\section{Super domination number of the  Haj\'{o}s sum of graphs}\label{Sec:Hajos}

In this section, we consider the Haj\'{o}s sum  of two graphs. 
We start with a tight lower bound for the super domination number of the Haj\'{o}s sum  of two graphs.

\begin{theorem}\label{Hajos}
Let $G_1=(V_1,E_1)$ and $G_2=(V_2,E_2)$ be two graphs with disjoint 
vertex sets, $x_1y_1\in E_1$ and $x_2y_2\in E_2$. Then for the Haj\'{o}s sum 
$$G_3=G_1(x_1y_1)+_H G_2(x_2y_2),$$
it holds:
$$ \gamma _{sp} (G_1) + \gamma _{sp} (G_2) -2 \leq  \gamma _{sp}(G_3) 
\leq  \gamma _{sp} (G_1) + \gamma _{sp} (G_2).  $$
\end{theorem}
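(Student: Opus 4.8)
The plan is to exploit the purely local nature of the Haj\'os sum. Writing $v:=v_H(x_1x_2)$, the vertex set of $G_3$ is $(V_1\setminus\{x_1\})\,\dot{\cup}\,(V_2\setminus\{x_2\})\,\dot{\cup}\,\{v\}$, and its edges are those of $G_1$ and $G_2$ (with each $x_i$ read as $v$) except that $x_1y_1,x_2y_2$ are deleted and $y_1y_2$ is added. Hence the induced subgraph of $G_3$ on the ``$G_1$-side'' is exactly $G_1-x_1y_1$, so one recovers $G_i$ from its side precisely by reinserting the edge $x_iy_i$ (that is, $vy_i$). Crucially, the only adjacencies of $G_3$ running between the two sides are those incident with $v$ together with the single edge $y_1y_2$; thus every super-domination relation that crosses the interface must pass through $v$ or along $y_1y_2$, and all differences between $G_3$ and $G_1\,\dot{\cup}\,G_2$ are confined to the three vertices $v,y_1,y_2$. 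I will use two facts: (i) super-dominating sets are upward closed, since adding a vertex only shrinks $\overline{S}$ and hence each private set $N(w)\cap\overline{S}$, so a repair by \emph{adding} an orphaned white is always safe; and (ii) by Lemma~\ref{Lem:Partition}(a) a minimum super-dominating set may be replaced by its partner, so on each relevant side we may assume the distinguished vertex ($v$, or $x_1,x_2$) lies in the set.

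\textbf{Lower bound.} Let $S$ be a minimum super-dominating set of $G_3$; by (ii) assume $v\in S$. Let $A,B$ be the parts of $S$ on the two sides, read as subsets of $V_1,V_2$ via $v\mapsto x_i$, so $|A|+|B|=|S|+1$. Passing from $G_3$ to $G_1$ alters only the neighbourhoods of $x_1$ and $y_1$, so the only whites of $A$ that can lose their super-dominator in $G_1$ are: a vertex whose $G_3$-dominator was $v$ while $y_1$ is white (now $x_1$ sees two whites), or $y_1$ itself when its only $G_3$-dominator was the cross neighbour $y_2$. In either case adding $y_1$ to $A$ restores super-domination by (i); symmetrically on the $G_2$-side one may need to add $y_2$. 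I would then show these two repairs cannot both be forced: the $v$-trigger can fire on at most one side because $v$ super-dominates at most one vertex, and the edge-$y_1y_2$ triggers require $y_1$ (resp.\ $y_2$) to be simultaneously a black dominator and a white dominee, which is impossible. Hence at most one vertex is added in total, giving $\gamma_{sp}(G_1)+\gamma_{sp}(G_2)\le|A|+|B|+1=|S|+2$.

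\textbf{Upper bound.} I would start from minimum super-dominating sets $S_1,S_2$ with $x_1\in S_1$, $x_2\in S_2$ (legitimate by (ii)), and form $S$ on $G_3$ by pasting the two sides and identifying $x_1,x_2$ into the single black vertex $v$; this merge already saves one vertex, so $|S|=|S_1|+|S_2|-1$ before any repair. By locality the only whites that can be orphaned are those privately dominated by $x_1$ or $x_2$ (now merged into the possibly overloaded $v$) or by $y_1$/$y_2$ (now seeing the new cross neighbour). Each orphan is repaired by adding it to $S$ via (i), and the aim is to show one repair suffices, so that $|S|\le|S_1|+|S_2|$. As in the lower bound, a break on $y_1y_2$ forces a colouring of $y_1,y_2$ incompatible with a break at $v$; the genuinely delicate point is that $v$ inherits neighbours from \emph{both} sides at once, so in the residual configurations I would re-orient one side to its Lemma~\ref{Lem:Partition}(a) partner (turning $x_i$ white to relieve $v$ of being a forced private dominator) rather than paying for a second repair.

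\textbf{Main obstacle.} The substantive difficulty lies entirely in this last point of the upper bound: because $v$ absorbs private dominations from both $G_1$ and $G_2$, a vertex that $x_1$ uniquely dominated and a vertex that $x_2$ uniquely dominated can become orphaned simultaneously, which naively costs two repairs against a merge saving of only one. Taming this through a case analysis of the interface configurations of $x_1,y_1,x_2,y_2$ --- and through the freedom to flip each side to its partner so that $v$ is never a forced dominator on both sides --- is the crux; by contrast the lower bound follows cleanly from the restriction-plus-one-repair scheme above, and I expect the tightness of both bounds to be witnessed by small explicit families (as in the $r$-gluing remarks).
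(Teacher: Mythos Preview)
Your lower bound is sound and matches the paper's argument closely. The locality picture is right, the two orphan types you name are the only ones, and your exclusion of simultaneous repairs checks out: the $v$-trigger fires on at most one side, each $y_i$-trigger forces $y_i$ white and $y_{3-i}$ black, and a $v$-trigger on side~$i$ forces $y_i$ white while a $y_{3-i}$-trigger forces $y_i$ black---so no cross combination survives. That gives $\gamma_{sp}(G_1)+\gamma_{sp}(G_2)\le |S|+2$ as you write.

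For the upper bound your strategy is viable but your normalization makes life harder than necessary, and the sketch has a real gap. By choosing $x_1\in S_1$, $x_2\in S_2$ you leave the colours of $y_1,y_2$ free; then the \emph{new} edge $y_1y_2$ can disrupt super-domination by $y_i$ itself, not just by $v$. Concretely: if $y_1\in S_1$ super-dominates some $w_1\in V_1$ while $y_2\in\overline{S_2}$, then in $G_3$ the vertex $y_1$ sees two whites $\{w_1,y_2\}$ and $w_1$ is orphaned---an orphan type separate from the $t_1,t_2$ at $v$. Your ``one repair suffices'' claim can fail here: $t_1$ and $w_1$ can be orphaned simultaneously, and neither adding $y_2$ nor adding $t_1$ alone cures both. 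The paper sidesteps this by normalizing the \emph{other} way: it arranges $y_1\in S_1$, $y_2\in S_2$ (again via Lemma~\ref{Lem:Partition}), so the new edge $y_1y_2$ joins two black vertices and is harmless; the only possible damage is then at $v$, and one added vertex suffices. The paper also uses the extra freedom of Lemma~\ref{Lem:Partition} to assume that whenever $x_i$ is white it is $y_i$ that super-dominates it, which tames the remaining cases cleanly. Your re-orientation idea can in fact be pushed through (swapping one side to its partner so $v$ becomes white and is super-dominated by the former $t_i$), but you would need to run the full interface case analysis; switching to the paper's $y_i\in S_i$ normalization is the quicker route.
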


\begin{proof}
Suppose that we have formed $G_3$. Let $v:=v_H(x_1x_2)$ be the vertex 
identifying the vertices $x_1$ and $x_2$.
Again we divide the proof into two parts, namely the lower and the upper bound.

\noindent
{\bf Lower bound} $ \mathbf{\gamma _{sp} (G_1) + \gamma _{sp} (G_2) -2 
\leq  \gamma _{sp}(G_3)} $. 

\noindent Let $S$ be a super dominating set for 
$G_3$. With Lemma~\ref{Lem:Partition}, we may assume that $v\in S$. Moreover, 
the cases $v\in S$, $y_1\in S$, $y_2\in \overline{S}$, and $v\in S$, $y_1\in  
\overline{S}$, $y_2\in S$ are symmetrical and can be studied together. 
Hence, we have the following three cases. 
\begin{itemize}

\item[(i)]
$v\in {S}$, $y_1\in \overline{S}$, $y_2\in \overline{S}$. 

There might exist one vertex in $\overline{S}$ which is super dominated
by $v $. 
If such vertex exists, denote it by $v'$. 
Then it holds that $ \overline{S}\cap N(v)  = \{v'\} $.
We may assume, without loss of generality, that $v'\in V_1$. 
Observe that $ v' \not= y_1 $ holds.

Let 
$$S_1=(S \cup \{x_1,v'\}) \setminus (V_2\cup \{v\})$$ 

(or $S_1=(S \cup \{x_1\}) \setminus (V_2\cup \{v\})$ if 
$v'$ does not exist) 
and 
$$S_2=(S \cup \{x_2\}) \setminus (V_1\cup \{v\}).$$ 

We have $\overline{S_i}\subseteq \overline{S}$. Let $u\in \overline{S_i}$. If 
$u$ is super dominated in $G_3$ by some vertex $w\in S$, then $w\neq v$, since $u\neq v'$, 
and thus $w\in S_i$. Hence, $S_1$ is a super dominating set for $G_1$ and $S_2$ is a 
super dominating set for $G_2$. 
Thus,
$$ \gamma _{sp} (G_1) + \gamma _{sp} (G_2)\leq \gamma _{sp}(G_3)  +2.  $$	

%it is super dominated by 

%As there exist vertices $y_i'\in S \setminus V_j$, 
%where $\{i,j\}=\{1,2\}$,  such that $N(y_1')\cap 
%\overline{S}=\{y_1\}$ and  $N(y_2')\cap \overline{S}=\{y_2\}$, 
%$y_i'\in S_i$ for both $i\in\{1,2\}$, 
%and neither $x_1$ nor $x_2$ are needed to super dominate any 
%vertex, 
%$S_1$ is a super dominating set for $G_1$ and $S_2$ is a 
%super dominating set for $G_2$. 
%Thus,
%$$ \gamma _{sp} (G_1) + \gamma _{sp} (G_2)\leq \gamma _{sp}(G_3)  +2.  $$	

\item[(ii)]
$v\in {S}$, $y_1\in {S}$, $y_2\in \overline{S}$. 

Let $$S_1=(S \cup \{x_1\}) \setminus (V_2\cup \{v\}), \; \;
S_2=(S \cup \{x_2,y_2\}) \setminus (V_1\cup \{v\}).$$

In comparison to Case (i), $y_2$ is added to $S_2$,
as it could occur that $y_1$ super dominates
$y_2$ in $S$, but $y_2$ is not super dominated in $S_2$.
Observe that a vertex $v'$, which was super dominated by $v$ in $S$, as in Case (i), does not have to be added.
First, let $v' \in V_1$. Then because of $y_1\in S$, $x_1$ super dominates $v'$ in $S_1$.
Second, let $v' \in V_2$. Then because of $y_2\in S_2$, 
$x_2$ super dominates $v'$ in $S_2$.

So $S_1$ is a super dominating set for $G_1$ and $S_2$ is a 
super dominating set for $G_2$. 
Thus again,
$$ \gamma _{sp} (G_1) + \gamma _{sp} (G_2)\leq \gamma _{sp}(G_3)  +2.  $$

\item[(iii)]
$v\in {S}$, $y_1\in {S}$, $y_2\in S$. 

Let
$$S_1=(S \cup \{x_1\})  \setminus (V_2\cup \{v\}), \; \; S_2=(S \cup \{x_2\}) 
\setminus (V_1\cup \{v\}).$$ 
Observe that a vertex $v'$, which was super dominated by $v$ in $S$, as in Case (i), does not have to be added.
Because $y_1\in {S}$, $y_2\in S$, 
$x_1$ super dominates $v'$ in $S_1$ or $x_2$ super dominates $v'$ in $S_2$.

So $S_1$ is a super dominating set for $G_1$ and $S_2$ is a 
super dominating set for $G_2$. 
Thus,
$$ \gamma _{sp} (G_1) + \gamma _{sp} (G_2)\leq \gamma _{sp}(G_3)  +1.  $$	
\end{itemize}

\noindent
{\bf Upper bound} $ \mathbf{\gamma _{sp}(G_3)  
\leq  \gamma _{sp} (G_1) + \gamma _{sp} (G_2)} $.

\noindent Let $S_1$ and $S_2$ be super dominating sets for $G_1$ and $G_2$, 
respectively. 
Let $f_i:\overline{S_i'}\to\overline{S_i}$ be the bijective function 
introduced in  Lemma~\ref{Lem:Partition} where $S_i'$ is a super dominating set 
in $G_i$ for both $i\in\{1,2\}.$ In particular, we can now assume that $y_1\in 
S_1$ and $y_2\in 
S_2$. Moreover, we may assume that if $x_1\in \overline{S_1}$, then it is super 
dominated by $y_1$. Indeed, if $x_1$ is not super dominated by $y_1$, then $y_1$ 
does not super dominate any vertex in $V_1$ and so $y_1\in D$. Thus, 
$y_1,x_1\in S'_1$ and we could consider $S'_1$ instead of $S_1.$ This holds
analogously for~$x_2$.

We have the following three cases:
\begin{itemize}
\item[(i)]
$x_1\in S_1$, $x_2\in S_2$.  

There might be vertices $t_1\in V_1\cap 
\overline{S_1}$ and  $t_2\in V_2\cap\overline{S_2}$ such that $t_1$ is super 
dominated by $x_1$ and $t_2$ is super dominated by $x_2$. By the definition of a
super dominating set, if $t_1$ exists, then all neighbours of $x_1$ are in $S_1$ 
except $t_1$, and the same is true for $x_2$. Without loss of generality, if 
exactly one exists, we assume it is $t_1$. 
Let $$S=(S_1\cup 
S_2\cup\{t_1,v\}) \setminus \{x_1,x_2\}$$ 
(or $ (S_1 \cup S_2\cup\{v\}) \setminus \{x_1,x_2\}$, if $t_1$ does not exist).

We have $|S|\leq |S_1|+|S_2|$, and
 $S$ is a super dominating set in $G_3$ since each vertex in $\overline{S}$ is super dominated by the same 
vertex in $S_1 \cup S_2 $ as before except possibly $t_2$ which is super dominated by  the vertex $v$.

\item[(ii)]
$x_1\in \overline{S_1}$, $x_2\in {S_2}$. 

By assumption, $y_1$ now super dominates $x_1$ and thus, 
$f_1(y_1)=x_1$. Thus, by Lemma ~\ref{Lem:Partition}, for the super dominating set 
$S_1'$ of $G_1$ it holds that $x_1\in {S_1'}$, $y_1\in \overline{S_1'}$ and 
$N(x_1)\cap \overline{S_1'}=\{y_1\}$, i.e., $x_1$ super dominates $y_1$. 
%In this case there exists $y_1'$ such that $N(y_1')\cap \overline{S_1'}=\{y_1\}$. If $y_1'=x_1$, then all other neighbours of $x_1$ should be in $S_1'$ and 
Let us now consider the set $$S=(S_1'\cup 
S_2\cup\{y_1,v\}) \setminus \{x_1,x_2\}.$$
We have $|S|\leq |S_1'|+|S_2|= |S_1|+|S_2|$ and $S$ is a super dominating set in $G_3$ since each vertex in $\overline{S}$ is super dominated by the same 
vertex in $S'_1 \cup S_2 $ as before with the (possible) exception that the vertex which was previously super dominated by $x_2$ is now super dominated by $v$. 

\item[(iii)]
$x_1\in \overline{S_1}$, $x_2\in \overline{S_2}$. 

As in Case (ii) for $x_1$, by assumption $y_i$ now super 
dominates $x_i$ for both $i\in\{1,2\}$. Thus, in $S_1'$, $x_1$ super 
dominates $y_1$. Let us now consider the set $$S=(S_1'\cup 
S_2\cup\{v\}) \setminus \{x_1\}.$$
We have $|S|\leq |S_1|+|S_2|$. Moreover, $y_1$ is super dominated by $y_2$, $v$ 
does not super dominate any vertex and all other vertices in $\overline{S}$ 
are super dominated by the same vertices in $ S'_1 \cup S_2 $ as before.
\end{itemize}
%Hence, the claim follows and 
This finishes the proof.
\qed
\end{proof}

We finish this section by showing that both the lower and 
upper bound of Theorem~\ref{Hajos} is tight.

\begin{remark}
Consider $G_1=C_{4p+2}$ and $G_2=C_3$. Then one can easily check that $G_1(x_1y_1)+_H 
G_2(x_2y_2)=C_{4p+4}$, for any two edges $x_1y_1$ and $x_2y_2$ from $G_1$ and 
$G_2$, respectively. By Theorem~\ref{thm-2}(b), we have 
$\gamma _{sp}(G_1(x_1y_1)+_H G_2(x_2y_2))=2p+2$, $\gamma _{sp}(G_1)=2p+2$ and 
$\gamma _{sp}(G_2)=2$. 
Thus, the lower bound of Theorem~\ref{Hajos} is tight. 
\end{remark}

%Now we present a tight upper bound for 
%the super domination number of the Haj\'{o}s sum  of two graphs.

%\begin{theorem}\label{Hajosupper}
%Let $G_1=(V_1,E_1)$ and $G_2=(V_2,E_2)$ be two graphs with disjoint 
%vertex sets, $x_1y_1\in E_1$ and $x_2y_2\in E_2$. Then for the Haj\'{o}s sum 
%$$G_3=G_1(x_1y_1)+_H G_2(x_2y_2),$$
%it holds:
%\end{theorem}

\begin{remark}
Consider the cycles 
$G_1=C_{4p}$ and $G_2=C_3$. Then, we have $G_1(x_1y_1)+_H 
G_2(x_2y_2)=C_{4p+2}$, for any two edges $x_1y_1$ and $x_2y_2$ from $G_1$ and 
$G_2$, respectively. By Theorem~\ref{thm-2}(b),  we have 
$\gamma _{sp}(G_1(x_1y_1)+_H G_2(x_2y_2))=2p+2$, $\gamma _{sp}(G_1)=2p$ and 
$\gamma _{sp}(G_2)=2$. 
Thus, the upper bound of Theorem~\ref{Hajos} is tight. 
\end{remark}

\section{The number of minimum size super dominating sets of some graphs}\label{Sec:SPsetNumber}

In this section, we initiate the study of the number of minimum size super dominating  sets 
of a graph. Similar research has been conducted, for example for the 
domination number in multiple papers, see for example,~\cite{Connolly}.
Let ${\cal N}_{sp}(G)$ be the family of
super  dominating sets of a graph $G$ with cardinality $\gamma _{sp} (G)$ and let
$N_{sp}(G)=|{\cal N}_{sp}(G)|$. By 
Theorem~\ref{thm-1} and Lemma~\ref{Lem:Partition}, for every 
non-empty graph $G$ we have $N_{sp}(G)\geq 2$. In the following, we consider some 
special graph classes and compute their $N_{sp}$ values. Following 
Theorem~\ref{thm-2}, by an easy argument, we have the following result for 
$N_{sp}$ of  the complete graph, the complete bipartite graph and the star 
graph.
  
\begin{theorem}\label{number-s.d.set}
\begin{enumerate}
\item[(a)] If $K_n$ is the complete graph,  then $N_{sp}(K_n)= n$.

\item[(b)] If $K_{n,m}$ is the complete bipartite graph,  then $N_{sp}(K_{n,m})= nm$, where $\min\{n,m\}\geq 2$.

\item[(c)] If $K_{1,n}$ is the  star graph, then $N_{sp}(K_{1,n})= n+1$.

\end{enumerate}
\end{theorem}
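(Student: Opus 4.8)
The plan is to leverage the exact super domination numbers from Theorem~\ref{thm-2} to fix the cardinality of the complement $\overline{S}$ of any minimum size super dominating set, and then to enumerate directly which complements of that size give rise to a super dominating set. Since a minimum size super dominating set $S$ is completely determined by $\overline{S}$, counting the sets in ${\cal N}_{sp}$ is the same as counting the admissible complements $\overline{S}$ of the prescribed cardinality. So in each part I would first read off $|\overline{S}|$ and then classify the candidate complements.

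For (a), Theorem~\ref{thm-2}(c) gives $\gamma_{sp}(K_n)=n-1$, so $|\overline{S}|=1$; write $\overline{S}=\{u\}$. Since $K_n$ is complete, every $v\in S$ satisfies $N(v)\cap\overline{S}=\{u\}$, so each singleton complement already yields a super dominating set. As there are $n$ choices of $u$, we obtain $N_{sp}(K_n)=n$. For (c), Theorem~\ref{thm-2}(e) gives $\gamma_{sp}(K_{1,n})=n$, and $K_{1,n}$ has $n+1$ vertices, so again $|\overline{S}|=1$. Here I would split into two cases: if $\overline{S}$ is the centre $c$, then any leaf $\ell$ has $N(\ell)\cap\overline{S}=\{c\}$ and super dominates $c$; if $\overline{S}=\{\ell_i\}$ is a leaf, then $N(c)\cap\overline{S}=\{\ell_i\}$, so $c$ super dominates $\ell_i$. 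Both kinds of complement work, giving $1+n=n+1$ admissible complements.

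For (b), Theorem~\ref{thm-2}(d) gives $\gamma_{sp}(K_{n,m})=n+m-2$, so $|\overline{S}|=2$; let $A$ and $B$ be the two parts. The decisive step is the case analysis according to whether the two white vertices lie in the same part or in different parts. If $\overline{S}=\{u,w\}$ with $u,w\in A$, then any vertex of $B$ is adjacent to both $u$ and $w$ while any vertex of $A$ is adjacent to neither, so no vertex can super dominate $u$ alone, and such a complement fails. If instead $u\in A$ and $w\in B$, then any vertex of $B\setminus\{w\}$ (nonempty because $m\geq 2$) satisfies $N(\cdot)\cap\overline{S}=\{u\}$ and super dominates $u$, while any vertex of $A\setminus\{u\}$ (nonempty because $n\geq 2$) super dominates $w$; such a complement succeeds. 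Hence the admissible complements are exactly the $nm$ pairs with one vertex in each part, so $N_{sp}(K_{n,m})=nm$.

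The main obstacle is precisely the case analysis in (b): one must check both that same-part complements can never be super dominating and that cross-part complements always are. It is exactly at this point that the hypothesis $\min\{n,m\}\geq 2$ is needed, to guarantee that the required super-dominating vertices in $A\setminus\{u\}$ and $B\setminus\{w\}$ actually exist. Parts (a) and (c) are routine once the complement size is identified.
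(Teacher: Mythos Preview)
Your proof is correct and follows essentially the same approach as the paper: fix $|\overline{S}|$ via Theorem~\ref{thm-2} and enumerate the admissible complements. Your version is in fact more explicit than the paper's, which merely asserts ``clearly'' that the two white vertices in (b) must lie in different parts; you supply the verification in both directions and note where the hypothesis $\min\{n,m\}\geq 2$ is used.
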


\begin{proof}
\begin{itemize}
\item[(a)] 
By Theorem~\ref{thm-2}(c), 
$\gamma_{sp}(K_n)= n-1$  holds.
Thus, $|\overline{S}|=1 $ follows. 
Clearly, any single vertex of $K_n$ can be chosen 
as $\overline{S} $. As $K_n$ has exactly $n$ vertices, it follows that $N_{sp}(K_n)= n$.
\item[(b)]
Let $\min\{n,m\}\geq 2$.
By Theorem~\ref{thm-2}(d), 
$\gamma_{sp}(K_{n,m})= n+m-2$ holds.
Thus, $|\overline{S}|=2 $ follows. 
Clearly, these two vertices of $ \overline{S} $ have to be chosen from two 
different sides of the bipartition. 
It follows that $N_{sp}(K_{n,m})= nm$.
\item[(c)]
By Theorem~\ref{thm-2}(e), 
$\gamma_{sp}(K_{1,n})= n$ holds.
Thus, $|\overline{S}|=1 $ follows. 
Clearly, any single vertex of $K_{1,n}$ can be chosen 
as $\overline{S} $. As $K_{1,n}$ has exactly $n+1$ vertices, it follows that 
$N_{sp}(K_{1,n})= n+1$.
\qed
\end{itemize}
\end{proof}
  
In the following, we compute $N_{sp}$  of the friendship graph.

\begin{theorem}
Let $F_n$ be the friendship graph of order $n$.  Then 
$$N_{sp}(F_n)= 2^n.$$
\end{theorem}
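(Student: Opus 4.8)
The plan is to use that $\gamma_{sp}(F_n)=n+1$ from Theorem~\ref{Firend-thm}, so that any minimum super dominating set $S$ satisfies $|\overline{S}|=(2n+1)-(n+1)=n$, and then to count the size-$(n+1)$ super dominating sets directly. Write $x$ for the central vertex and let the $n$ triangles be $\{x,u_{2i-1},u_{2i}\}$ for $1\le i\le n$, so that each outer vertex $u_j$ has $N(u_j)=\{x,p(u_j)\}$, where $p(u_j)$ denotes its triangle partner, while $N(x)$ is the whole set of $2n$ outer vertices. The decisive observation I will repeatedly use is that a vertex $v$ super dominates $u$ only if $u\in N(v)$; hence the only possible super dominators of an outer vertex are $x$ and its partner, and the only possible super dominators of $x$ are outer vertices. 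I split the count according to whether $x\in S$ or $x\in\overline{S}$.

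First I would treat the case $x\in S$, which I expect to produce all $2^n$ sets. Here every white vertex is an outer vertex, and since $x$ is adjacent to all $n\ge 2$ of them, $N(x)\cap\overline{S}$ has size $n\ge 2$, so $x$ super dominates no vertex. Consequently each white outer vertex $u$ must be super dominated by its partner $p(u)$, which forces $p(u)\in S$; in particular no triangle can contain two white vertices. As there are $n$ white vertices spread over $n$ triangles with at most one each, every triangle contains \emph{exactly} one white vertex. Conversely, I would verify that any such choice yields a valid minimum super dominating set: $x\in S$ dominates all outer vertices, and for the white vertex $u$ of each triangle its black partner satisfies $N(p(u))\cap\overline{S}=\{u\}$ because $x\in S$. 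Since each triangle offers exactly two choices for its white vertex, this case contributes $2^n$ sets.

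Next I would rule out the case $x\in\overline{S}$ for $n\ge 2$. If some outer vertex $u$ were also white, then its only neighbours are $x$ (white) and $p(u)$; the partner $p(u)$, if black, would satisfy $N(p(u))\cap\overline{S}\supseteq\{x,u\}$, so it would super dominate two vertices, which is impossible, while if $p(u)$ is white then $u$ has no super dominator at all. Hence $x$ being white forces $\overline{S}=\{x\}$, giving $|S|=2n$, which contradicts $|S|=n+1$ when $n\ge 2$. Therefore no minimum super dominating set has $x\in\overline{S}$, and combining the two cases yields $N_{sp}(F_n)=2^n$.

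The main obstacle is the careful local super domination bookkeeping around the central vertex: one must notice both that a black centre can never super dominate a single vertex (it sees every white outer vertex) and that a white centre sabotages any white outer vertex by forcing its partner to super dominate two vertices at once. Once these two facts are isolated, the case split and the count $2^n$ follow immediately. I would also remark that the argument genuinely requires $n\ge 2$, consistent with $F_1=K_3$ having $N_{sp}(K_3)=3\neq 2$.
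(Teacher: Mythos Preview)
Your proof is correct and follows essentially the same approach as the paper: both argue that the centre $x$ must lie in $S$ and that each triangle then contributes exactly one white vertex, yielding $2^n$ choices. Your version is more carefully argued---you explicitly verify the converse, cleanly separate the case $x\in\overline{S}$, and correctly flag that the statement fails for $n=1$ (where $F_1=K_3$ has $N_{sp}=3$), a boundary case the paper's proof glosses over.
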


\begin{proof}
By Theorem~\ref{Firend-thm}, we know that $\gamma_{sp}(F_n)=n+1$. Now 
consider Figure~\ref{friend}. For any dominating set $S$ of the friendship 
graph with cardinality less than $2n$, if we do not have 
$\{x,u_{2t-1}\}\subseteq S$ or $\{x,u_{2t}\}\subseteq S$, where $1\leq t 
\leq n$ and $x$ is the central vertex, then it is clear that $S$ is not a super dominating set. 
So we need $x$ in our super dominating set. Among $u_{2t-1}$ and $u_{2t}$,  
where $1\leq t \leq n$, we choose one of them.  So we have $2^n$ super 
dominating sets of size $n+1$, and we have the result.  		 \qed

\end{proof}
  
Now we consider the path graph and compute $N_{sp}(P_n)$.

\begin{theorem}
Let $P_n$ be the path graph of order $n\geq 2$.  Then 
\begin{displaymath}
N_{sp}(P_n)= \left\{ \begin{array}{ll}
2 & \textrm{if $n$ is even, }\\
\\
\frac{3}{2}(n-1) & \textrm{if $n$ is odd.}
\end{array} \right.
\end{displaymath}
Also we have $N_{sp}(P_1)=1$.
 %$N_{sp}(P_2)=2$, $N_{sp}(P_3)=3$ and $N_{sp}(P_4)=2$.
\end{theorem}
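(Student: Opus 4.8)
The plan is to translate everything into two-colourings (black $=$ in $S$, white $=$ in $\overline{S}$) of the vertices $1,2,\dots,n$ of $P_n$, using Theorem~\ref{thm-2}(a): a super dominating set is minimum exactly when it has $\lceil n/2\rceil$ black and $\lfloor n/2\rfloor$ white vertices. First I would record two elementary facts for super dominating sets of a path: a black vertex $v$ super dominates at most one white vertex (it super dominates $u$ iff $N(v)\cap\overline{S}=\{u\}$), and hence no three consecutive vertices can be white (the middle of such a triple has no black neighbour able to super dominate it). The case $P_1$ is immediate, since the single vertex must lie in $S$, so $N_{sp}(P_1)=1$.

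For even $n=2m$ a minimum super dominating set has cardinality $|V(P_n)|/2$, so Lemma~\ref{Lem:Partition}(c) applies: $\overline{S}$ is also super dominating and the super domination is a bijection in which each black vertex has exactly one white neighbour and conversely. Thus the bichromatic edges (joining a black to a white vertex) form a perfect matching of $P_{2m}$, while every non-matching edge is monochromatic. Since $P_{2m}$ has the unique perfect matching $\{12,34,\dots,(2m-1)(2m)\}$, the colouring is forced once the colour of vertex $1$ is fixed (the equalities and inequalities propagate along the path, a tree, without contradiction), giving exactly two colourings, both of which are readily checked to be minimum super dominating sets. Therefore $N_{sp}(P_{2m})=2$.

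For odd $n=2m+1$ I would again invoke Lemma~\ref{Lem:Partition}: here $|S|=|S'|=m+1$ and $|\overline{S}|=|\overline{S'}|=m$, so $D=S\cap S'$ is a single black vertex $d$, and the bijection $f$ pairs the remaining $m$ black vertices with the $m$ white vertices along edges of $P_n$. Hence every minimum super dominating set arises from a near-perfect matching of $P_{2m+1}$ missing one vertex $d$, together with an admissible colouring. A near-perfect matching of a path must miss a vertex in an odd position (both sides have even length), there are $m+1$ such matchings, and the colouring is free up to a global flip on each nonempty side; this gives $4$ colourings for each interior $d$ and $2$ for $d\in\{1,2m+1\}$, hence $4(m-1)+4=4m$ pairs, each of which I would verify is indeed a minimum super dominating set. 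The assignment (pair)$\mapsto$(set) is not injective: a set $S$ is produced by exactly $2^{p(S)}$ pairs, where $p(S)$ counts the white vertices super dominated by two distinct black neighbours, since for each such ``doubly served'' white the defect $d$ may be slid past it. The remaining task is to show $\sum_S(2^{p(S)}-1)=m$, and I expect to do this by proving that $p(S)\le 1$ always and that each of the $m$ even positions $2,4,\dots,2m$ is a doubly served white in exactly one minimum super dominating set (the forced black neighbours together with the global count of $m$ whites pin down the rest of the colouring). This yields $N_{sp}(P_{2m+1})=4m-m=3m=\tfrac32(n-1)$.

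The main obstacle is precisely this last bookkeeping in the odd case: controlling the non-injectivity of the matching-to-set correspondence, i.e.\ showing that doubly served whites occur at most once per set and contribute exactly $m$ coincidences overall. A more hands-on alternative, avoiding Lemma~\ref{Lem:Partition}, is to characterise the admissible strings directly (white runs of length $1$ or $2$; each length-$2$ run buffered on both sides by a long or boundary black run, and each isolated white buffered on at least one side) and to count them by casework on the number of length-$2$ white runs; the difficulty there is the same, namely arranging the buffering constraints so that the total collapses to $\tfrac32(n-1)$.
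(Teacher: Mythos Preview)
Your proposal is correct and follows essentially the same route as the paper: both invoke Lemma~\ref{Lem:Partition} to see that the single ``defect'' vertex $d\in D$ sits at an odd position, count $4(m-1)+2\cdot2=4m$ labelled configurations, and then correct for overcounting caused by white vertices that are super dominated from both sides. The only real difference is in how the overcount is computed. The paper partitions the $4m$ configurations into eight explicit classes (SW, SB, EW, EB, MBB, MWW, MBW, MWB) and observes that exactly the $2k$ configurations in the classes SW, EW, MBW, MWB are duplicates, giving $4k-k=3k$. You instead argue that $p(S)\le1$ and that each even position $2,4,\dots,2m$ is the unique doubly served white of exactly one minimum super dominating set, giving $N_1=m$ and hence $4m-m=3m$; this is a clean reformulation of the same bookkeeping. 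Your parenthetical justification (``the forced black neighbours together with the global count of $m$ whites pin down the rest of the colouring'') would benefit from one extra line: once $j$ is doubly served you have $|S_0|=0$, so \emph{every} black vertex has exactly one white neighbour, and this condition propagates deterministically outward from the block $BBWBB$ centred at $j$.
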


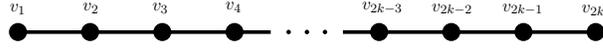
\begin{figure}
\begin{center}
\psscalebox{0.6 0.6}
{
\begin{pspicture}(0,-2.5535576)(13.219974,-1.6664423)
\psdots[linecolor=black, dotsize=0.4](0.19711533,-2.3564422)
\psdots[linecolor=black, dotsize=0.4](1.7971153,-2.3564422)
\psdots[linecolor=black, dotsize=0.4](3.3971152,-2.3564422)
\psdots[linecolor=black, dotsize=0.4](4.997115,-2.3564422)
\psdots[linecolor=black, dotsize=0.1](6.1971154,-2.3564422)
\psdots[linecolor=black, dotsize=0.1](6.5971155,-2.3564422)
\psdots[linecolor=black, dotsize=0.1](6.997115,-2.3564422)
\psdots[linecolor=black, dotsize=0.4](8.197115,-2.3564422)
\psdots[linecolor=black, dotsize=0.4](9.797115,-2.3564422)
\psdots[linecolor=black, dotsize=0.4](11.397116,-2.3564422)
\psdots[linecolor=black, dotsize=0.4](12.997115,-2.3564422)
\psline[linecolor=black, linewidth=0.08](5.7971153,-2.3564422)(0.19711533,-2.3564422)
\psline[linecolor=black, linewidth=0.08](7.397115,-2.3564422)(12.997115,-2.3564422)
\rput[bl](0.017115327,-1.9564422){${v_1}$}
\rput[bl](1.6371154,-1.9364423){${v_2}$}
\rput[bl](3.1771154,-1.9364423){${v_3}$}
\rput[bl](4.7971153,-1.9164423){${v_4}$}
\rput[bl](12.689973,-1.9812042){${v_{2k}}$}
\rput[bl](10.933306,-1.9812042){${v_{2k-1}}$}
\rput[bl](9.367592,-1.9812042){${v_{2k-2}}$}
\rput[bl](7.8209248,-1.9431089){${v_{2k-3}}$}
\end{pspicture}
}
\end{center}
\caption{ Path graph of order $2k\geq 2$.} \label{path2k}
\end{figure}

%\begin{figure}
%\centering
%\includegraphics[width=0.95\textwidth, height = 3cm]{path2k.png}
%\caption{ Path graph of order $2k\geq 6$ } 
%\label{path2k}
%\end{figure}

\begin{proof}
By Theorem~\ref{thm-2}(a),  we have  $\gamma_{sp}(P_n)=\lceil \frac{n}{2} \rceil$. 
We have two cases based on the parity of $n$:
\begin{itemize}
\item[(a)] $n$ even.

Let $n=2k$ with $ k \in \N $. 
Let $V=\{v_1,v_2,\ldots,v_{2k}\}$ be the vertex set 
of $P_{2k}$ (see Figure~\ref{path2k}), and $S$ be a super dominating 
set of $P_{2k}$ with $|S|=k$. Since $|S|=n/2$, in the partition of 
Lemma~\ref{Lem:Partition}(b), we have $D=\emptyset$ and $ V(G) = S \, \dot{\cup} 
\, S'$,
where $S$ and $S'$ have the same cardinality $n/2$.
By Lemma~\ref{Lem:Partition}(a), each vertex in $S$ is 
adjacent to exactly one vertex in $S'$ and vice versa. Thus, if we choose 
$v_1\in S$, then $v_2\in S'$, $v_3\in S'$, $v_4\in S$ and so on. After we 
know whether $v_1\in S$, all other vertices have their set decided. The case 
with $v_1\in S'$ is analogous; we can just swap $S$ and $S'$. 
Thus, we have $N_{sp}(P_{2k})=2$.

\begin{figure}
\begin{center}
\psscalebox{0.6 0.6}
{
\begin{pspicture}(0,-7.49)(20.9,1.63)
\psdots[linecolor=black, dotsize=0.4](10.450001,0.96)
\psdots[linecolor=black, dotsize=0.4](12.05,0.96)
\psdots[linecolor=black, dotsize=0.4](13.650001,0.96)
\psdots[linecolor=black, dotsize=0.4](15.250001,0.96)
\psdots[linecolor=black, dotsize=0.4](16.85,0.96)
\psdots[linecolor=black, dotsize=0.4](18.45,0.96)
\psdots[linecolor=black, dotsize=0.4](20.050001,0.96)
\rput[bl](10.330001,1.38){$v_1$}
\rput[bl](11.85,1.38){$v_2$}
\rput[bl](13.450001,1.36){$v_3$}
\rput[bl](15.030001,1.36){$v_4$}
\rput[bl](16.650002,1.34){$v_5$}
\rput[bl](18.27,1.34){$v_6$}
\rput[bl](19.810001,1.32){$v_7$}
\psline[linecolor=black, linewidth=0.08](10.450001,0.96)(16.85,0.96)(16.85,0.96)
\psline[linecolor=black, linewidth=0.08](16.85,0.96)(20.050001,0.96)(20.050001,0.96)
\psdots[linecolor=black, dotsize=0.4](0.8500006,0.96)
\psdots[linecolor=black, dotsize=0.4](2.4500005,0.96)
\psdots[linecolor=black, dotsize=0.4](4.0500007,0.96)
\psdots[linecolor=black, dotsize=0.4](5.6500006,0.96)
\psdots[linecolor=black, dotsize=0.4](7.2500005,0.96)
\rput[bl](0.7300006,1.38){$v_1$}
\rput[bl](2.2500007,1.38){$v_2$}
\rput[bl](3.8500006,1.36){$v_3$}
\rput[bl](5.430001,1.36){$v_4$}
\rput[bl](7.0500007,1.34){$v_5$}
\psline[linecolor=black, linewidth=0.08](0.8500006,0.96)(7.2500005,0.96)(7.2500005,0.96)
\psline[linecolor=black, linewidth=0.08](7.6500006,-0.24)(0.05000061,-0.24)(0.05000061,-5.04)(8.05,-5.04)(8.05,-0.24)(7.2500005,-0.24)(7.2500005,-0.24)
\psline[linecolor=black, linewidth=0.08](0.05000061,-1.04)(8.05,-1.04)(7.2500005,-1.04)
\psline[linecolor=black, linewidth=0.08](0.05000061,-1.84)(8.05,-1.84)(8.05,-1.84)
\psline[linecolor=black, linewidth=0.08](0.05000061,-2.64)(8.05,-2.64)(8.05,-2.64)
\psline[linecolor=black, linewidth=0.08](0.05000061,-3.44)(8.05,-3.44)(8.05,-3.44)
\psline[linecolor=black, linewidth=0.08](0.05000061,-4.24)(8.05,-4.24)(8.05,-4.24)
\psline[linecolor=black, linewidth=0.08](11.581035,-0.24)(9.650001,-0.24)(9.650001,-7.44)(20.85,-7.44)(20.85,-0.24)(10.808621,-0.24)(10.808621,-0.24)
\psline[linecolor=black, linewidth=0.08](9.650001,-1.04)(20.463793,-1.04)
\psline[linecolor=black, linewidth=0.08](9.650001,-1.84)(20.85,-1.84)
\psline[linecolor=black, linewidth=0.08](20.85,-1.04)(20.463793,-1.04)(20.463793,-1.04)
\psline[linecolor=black, linewidth=0.08](9.650001,-2.64)(20.85,-2.64)
\psline[linecolor=black, linewidth=0.08](9.650001,-3.44)(20.85,-3.44)
\psline[linecolor=black, linewidth=0.08](9.650001,-4.24)(20.85,-4.24)
\psline[linecolor=black, linewidth=0.08](9.650001,-5.04)(20.85,-5.04)
\psline[linecolor=black, linewidth=0.08](9.650001,-5.84)(20.85,-5.84)
\psline[linecolor=black, linewidth=0.08](9.650001,-6.64)(20.85,-6.64)
\psdots[linecolor=black, dotsize=0.4](10.450001,-0.64)
\psdots[linecolor=black, dotsize=0.4](10.450001,-1.44)
\psdots[linecolor=black, dotsize=0.4](10.450001,-2.24)
\psdots[linecolor=black, dotsize=0.4](10.450001,-3.04)
\psdots[linecolor=black, dotsize=0.4](10.450001,-3.84)
\psdots[linecolor=black, dotsize=0.4](12.05,-4.64)
\psdots[linecolor=black, dotsize=0.4](12.05,-5.44)
\psdots[linecolor=black, dotsize=0.4](12.05,-6.24)
\psdots[linecolor=black, dotsize=0.4](12.05,-7.04)
\psdots[linecolor=black, dotsize=0.4](12.05,-0.64)
\psdots[linecolor=black, fillstyle=solid, dotstyle=o, dotsize=0.4, fillcolor=white](10.450001,-4.64)
\psdots[linecolor=black, fillstyle=solid, dotstyle=o, dotsize=0.4, fillcolor=white](10.450001,-5.44)
\psdots[linecolor=black, fillstyle=solid, dotstyle=o, dotsize=0.4, fillcolor=white](10.450001,-6.24)
\psdots[linecolor=black, fillstyle=solid, dotstyle=o, dotsize=0.4, fillcolor=white](10.450001,-7.04)
\psdots[linecolor=black, fillstyle=solid, dotstyle=o, dotsize=0.4, fillcolor=white](12.05,-1.44)
\psdots[linecolor=black, fillstyle=solid, dotstyle=o, dotsize=0.4, fillcolor=white](12.05,-2.24)
\psdots[linecolor=black, dotsize=0.4](13.650001,-1.44)
\psdots[linecolor=black, dotsize=0.4](15.250001,-3.04)
\psdots[linecolor=black, dotsize=0.4](15.250001,-3.84)
\psdots[linecolor=black, dotsize=0.4](13.650001,-2.24)
\psdots[linecolor=black, fillstyle=solid, dotstyle=o, dotsize=0.4, fillcolor=white](12.05,-3.04)
\psdots[linecolor=black, fillstyle=solid, dotstyle=o, dotsize=0.4, fillcolor=white](13.650001,-3.04)
\psdots[linecolor=black, fillstyle=solid, dotstyle=o, dotsize=0.4, fillcolor=white](12.05,-3.84)
\psdots[linecolor=black, fillstyle=solid, dotstyle=o, dotsize=0.4, fillcolor=white](13.650001,-3.84)
\psdots[linecolor=black, dotsize=0.4](16.85,-3.04)
\psdots[linecolor=black, dotsize=0.4](16.85,-3.84)
\psdots[linecolor=black, dotsize=0.4](18.45,-3.04)
\psdots[linecolor=black, dotsize=0.4](20.050001,-3.84)
\psdots[linecolor=black, fillstyle=solid, dotstyle=o, dotsize=0.4, fillcolor=white](20.050001,-3.04)
\psdots[linecolor=black, fillstyle=solid, dotstyle=o, dotsize=0.4, fillcolor=white](18.45,-3.84)
\psdots[linecolor=black, fillstyle=solid, dotstyle=o, dotsize=0.4, fillcolor=white](13.650001,-0.64)
\psdots[linecolor=black, fillstyle=solid, dotstyle=o, dotsize=0.4, fillcolor=white](15.250001,-0.64)
\psdots[linecolor=black, dotsize=0.4](16.85,-0.64)
\psdots[linecolor=black, dotsize=0.4](18.45,-0.64)
\psdots[linecolor=black, dotsize=0.4](16.85,-1.44)
\psdots[linecolor=black, dotsize=0.4](18.45,-1.44)
\psdots[linecolor=black, dotsize=0.4](15.250001,-2.24)
\psdots[linecolor=black, dotsize=0.4](20.050001,-2.24)
\psdots[linecolor=black, fillstyle=solid, dotstyle=o, dotsize=0.4, fillcolor=white](20.050001,-0.64)
\psdots[linecolor=black, fillstyle=solid, dotstyle=o, dotsize=0.4, fillcolor=white](20.050001,-1.44)
\psdots[linecolor=black, fillstyle=solid, dotstyle=o, dotsize=0.4, fillcolor=white](18.45,-2.24)
\psdots[linecolor=black, fillstyle=solid, dotstyle=o, dotsize=0.4, fillcolor=white](16.85,-2.24)
\psdots[linecolor=black, fillstyle=solid, dotstyle=o, dotsize=0.4, fillcolor=white](15.250001,-1.44)
\psdots[linecolor=black, dotsize=0.4](13.650001,-4.64)
\psdots[linecolor=black, dotsize=0.4](13.650001,-5.44)
\psdots[linecolor=black, dotsize=0.4](13.650001,-6.24)
\psdots[linecolor=black, dotsize=0.4](13.650001,-7.04)
\psdots[linecolor=black, fillstyle=solid, dotstyle=o, dotsize=0.4, fillcolor=white](15.250001,-4.64)
\psdots[linecolor=black, fillstyle=solid, dotstyle=o, dotsize=0.4, fillcolor=white](15.250001,-5.44)
\psdots[linecolor=black, fillstyle=solid, dotstyle=o, dotsize=0.4, fillcolor=white](15.250001,-6.24)
\psdots[linecolor=black, dotsize=0.4](16.85,-4.64)
\psdots[linecolor=black, dotsize=0.4](18.45,-4.64)
\psdots[linecolor=black, dotsize=0.4](16.85,-5.44)
\psdots[linecolor=black, dotsize=0.4](20.050001,-5.44)
\psdots[linecolor=black, dotsize=0.4](18.45,-6.24)
\psdots[linecolor=black, dotsize=0.4](20.050001,-6.24)
\psdots[linecolor=black, dotsize=0.4](20.050001,-7.04)
\psdots[linecolor=black, dotsize=0.4](15.250001,-7.04)
\psdots[linecolor=black, fillstyle=solid, dotstyle=o, dotsize=0.4, fillcolor=white](20.050001,-4.64)
\psdots[linecolor=black, fillstyle=solid, dotstyle=o, dotsize=0.4, fillcolor=white](18.45,-5.44)
\psdots[linecolor=black, fillstyle=solid, dotstyle=o, dotsize=0.4, fillcolor=white](16.85,-6.24)
\psdots[linecolor=black, fillstyle=solid, dotstyle=o, dotsize=0.4, fillcolor=white](16.85,-7.04)
\psdots[linecolor=black, fillstyle=solid, dotstyle=o, dotsize=0.4, fillcolor=white](18.45,-7.04)
\psdots[linecolor=black, dotsize=0.4](0.8500006,-0.64)
\psdots[linecolor=black, dotsize=0.4](2.4500005,-0.64)
\psdots[linecolor=black, dotsize=0.4](7.2500005,-0.64)
\psdots[linecolor=black, dotsize=0.4](0.8500006,-1.44)
\psdots[linecolor=black, dotsize=0.4](4.0500007,-1.44)
\psdots[linecolor=black, dotsize=0.4](7.2500005,-1.44)
\psdots[linecolor=black, dotsize=0.4](0.8500006,-2.24)
\psdots[linecolor=black, dotsize=0.4](5.6500006,-2.24)
\psdots[linecolor=black, dotsize=0.4](7.2500005,-2.24)
\psdots[linecolor=black, dotstyle=o, dotsize=0.4, fillcolor=white](2.4500005,-3.04)
\psdots[linecolor=black, dotsize=0.4](4.0500007,-4.64)
\psdots[linecolor=black, dotsize=0.4](5.6500006,-4.64)
\psdots[linecolor=black, dotsize=0.4](0.8500006,-3.04)
\psdots[linecolor=black, dotsize=0.4](5.6500006,-3.04)
\psdots[linecolor=black, dotsize=0.4](2.4500005,-3.84)
\psdots[linecolor=black, dotstyle=o, dotsize=0.4, fillcolor=white](5.6500006,-3.84)
\psdots[linecolor=black, dotsize=0.4](7.2500005,-3.84)
\psdots[linecolor=black, dotsize=0.4](2.4500005,-4.64)
\psdots[linecolor=black, fillstyle=solid, dotstyle=o, dotsize=0.4, fillcolor=white](4.0500007,-0.64)
\psdots[linecolor=black, fillstyle=solid, dotstyle=o, dotsize=0.4, fillcolor=white](5.6500006,-0.64)
\psdots[linecolor=black, fillstyle=solid, dotstyle=o, dotsize=0.4, fillcolor=white](5.6500006,-1.44)
\psdots[linecolor=black, fillstyle=solid, dotstyle=o, dotsize=0.4, fillcolor=white](2.4500005,-1.44)
\psdots[linecolor=black, fillstyle=solid, dotstyle=o, dotsize=0.4, fillcolor=white](2.4500005,-2.24)
\psdots[linecolor=black, fillstyle=solid, dotstyle=o, dotsize=0.4, fillcolor=white](4.0500007,-2.24)
\psdots[linecolor=black, fillstyle=solid,fillcolor=black, dotsize=0.4](4.0500007,-3.04)
\psdots[linecolor=black, fillstyle=solid, dotstyle=o, dotsize=0.4, fillcolor=white](7.2500005,-3.04)
\psdots[linecolor=black, fillstyle=solid,fillcolor=black, dotsize=0.4](4.0500007,-3.84)
\psdots[linecolor=black, fillstyle=solid, dotstyle=o, dotsize=0.4, fillcolor=white](0.8500006,-3.84)
\psdots[linecolor=black, fillstyle=solid, dotstyle=o, dotsize=0.4, fillcolor=white](0.8500006,-4.64)
\psdots[linecolor=black, fillstyle=solid, dotstyle=o, dotsize=0.4, fillcolor=white](7.2500005,-4.64)
\end{pspicture}
}
\end{center}
\caption{Minimum size super dominating sets of $P_5$ and $P_7$, 
respectively.} 
\label{path5and7}
\end{figure}
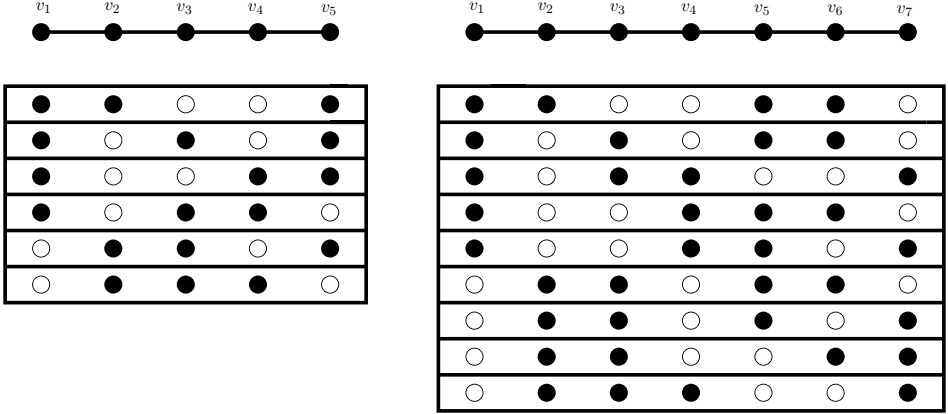

\item[(b)] $n$ odd.

Let $n=2k+1$ with $ k \in \N_0 $. 
It is easy to see that $N_{sp}(P_1)=1$, $N_{sp}(P_3)=3$. 
Now we consider $n\ge 5$ and thus $ k \ge 2$.
Let $V=\{v_1,v_2,\ldots,v_{2k+1}\}$ be the vertex set 
of $P_{2k+1}$. 

By Theorem~\ref{thm-2}(a), we have 
$\gamma_{sp}(P_{2k+1})=k+1$. Let $S$ be a super dominating set of cardinality 
$k+1$ and $S'$ be another super dominating set of the same cardinality with 
$\overline{S}\subseteq S'$ and $\overline{S'}\subseteq S$. Moreover, 
by Lemma~\ref{Lem:Partition}, let 
$f:\overline{S'}\to \overline{S}$ be a bijective function for which $f(a)=b$ if 
and only if $a$ super dominates $b$ for $S$ and $b$ super dominates $a$ for $S'$. 
%The set $S'$ and the function $f$ exist by Lemma \ref{Lem:Partition}. 
Observe that $|\overline{S}|=|\overline{S'}|=k$. Again,
by Lemma~\ref{Lem:Partition}(b), $|D|=|S\cap S'|=1$ holds. Let us denote by $w$ 
the single vertex in $S\cap S'$. Since $w$ is not necessary for super dominating 
any other vertices, $S\setminus\{w\}$ is a super dominating set for 
the induced subgraph $P'$ of $V(P_{2k+1})\setminus \{w\}$. Notice that $P'$ 
consists of either two paths or a single even-length path (when $w$ is the start 
or end vertex of the original path). We have $|S\setminus \{w\}|=k$. Thus, each path in 
$P'$ has even-length since odd paths have more than half of their vertices in 
any super dominating set. Hence, $w=v_i$ where $i$ is odd. Thus, we have $k+1$ 
possible choices for $w$. Furthermore, each of these choices for $w$ yields a 
super dominating set of the smallest cardinality.

When $w=v_1$, we have $P'=P_{2k}$. Since $N_{sp}(P_{2k})=2$, 
based on the proof of $n$ even, 
we may choose in this case $S$ in two different ways based on whether $v_2\in S$. 
Moreover, if we choose $S$ in one way, then $S'$ 
is the other super dominating set of the 
smallest cardinality in $P'$. Thus, $w=v_1$ contributes two super dominating 
sets. Furthermore, the choice $w=v_{2k+1}$ is symmetrical.

Let us now consider the case $w=v_{2i+1}$ where $i \in \{1,2,\dots,k-1\} $. Now, $P'$ 
consists of two even paths of lengths $2i$ and $2k-2i$. Hence, for each choice 
of $w$ there are four different smallest super dominating sets (two for both 
even paths). 
In summary, we can classify the super dominating sets in
eight classes, where by definition, the vertices $w$ are always black.
\begin{description}
\item[$\mathbf{SW(0)}$:] $ w= v_1 $, $v_2$ is white (i.e., does not lie in $S$),
\item[$\mathbf{SB(0)}$:] $ w= v_1, $ $v_2$ is black (i.e., lies in $S$),
\item[$\mathbf{EW(k)}$:] $ w= v_{2k+1} $, $v_{2k}$ is white,
\item[$\mathbf{EB(k)}$:] $ w= v_{2k+1} $, $v_{2k}$ is black,
\item[$\mathbf{MBB(i)}$] $ w= v_{2i+1} $, $v_{2i}$ is black,
$v_{2i+2}$ is black, 
where $ i \in \{1,2,\dots,k-1\} $,
\item[$\mathbf{MWW(i)}$:] $ w= v_{2i+1} $, $v_{2i}$ is white,
$v_{2i+2}$ is white,
where $ i \in \{1,2,\dots,k-1\} $,
\item[$\mathbf{MBW(i)}$:] $ w= v_{2i+1} $, $v_{2i}$ is black,
$v_{2i+2}$ is white,
where $ i \in \{1,2,\dots,k-1\} $,
\item[$\mathbf{MWB(i)}$:] $ w= v_{2i+1} $, $v_{2i}$ is white,
$v_{2i+2}$ is black,
where $ i \in \{1,2,\dots,k-1\} $.
\end{description}
This leads to $$ 2 \cdot 2 + 4 \cdot (k-1) = 4 k $$ super dominating sets. 
However, some super dominating sets appear in multiple  classes. 
%We encourage the reader to consider Figure \ref{path5and7} for better 
%understanding of these classes and their overlaps. 
Below we give a formal explanation on the number 
these classes overlap. 

Recall that the eight cases above are formed by first choosing the set $D=\{w\}$. 
Thus, a super dominating set $S$ is counted in multiple classes if and only if 
there are multiple ways to choose the set $D$ (and the super dominating set $S'$) for 
$S$. Moreover, since $|D|=1$, we can choose the set $D$ in multiple ways if and only 
if there is a white vertex which is super dominated by two different black 
vertices. Since the maximum degree of a vertex is $2$, we may choose the set $D$ in 
these cases in exactly two ways, say $D$ and $D'$, where 
the corresponding vertices $w$ and $w'$
have distance $2$ in the path. Hence, in each such case, a super dominating 
set is included in two classes with consecutive indices. 

Finally, if we look into the eight classes 
above, we notice that in the classes SB(0), EB(k), MBB(i) and MWW(i), the 
vertex $w$ does 
not super dominate any vertex. Thus, we do not have any white vertex, which 
is dominated twice in those classes. On the other hand, we have a white vertex that
is super dominated by two black vertices in the classes SW(0), EW(k), MBW(i), and 
MWB(i). This twice super dominated white vertex is $v_2$, $v_{2k}$, 
$v_{2i+2}$ and $v_{2i}$, respectively. 

We illustrate these overlapping super dominating sets 
in Figure~\ref{path5and7} for $P_5$ and $P_7$.  For $P_5$, 
in row IV, SW(0) and vertex $ w = v_1$ overlap with MWB(1) and vertex $w=v_3$, 
and in row V, EW(2) and vertex $ w = v_5$ overlap with MBW(1) and vertex $w=v_3$. 
For $P_7$, 
in row III, SW(0) and vertex $ w = v_1$ overlap with MWB(1) and vertex $w=v_3$, 
in row V, EW(3) and vertex $ w = v_7$ overlap with MBW(2) and vertex $w=v_5$, 
and in row VI, MBW(1) and vertex $ w = v_3$ overlap with MWB(2) and vertex $w=v_5$. 

That is, we are counting the super dominating 
sets twice in $1+1+(k-1)+(k-1)=2k$ classes. Hence, $N_{sp}(P_{2k+1})=4k-(2k/2)=3k$ as 
claimed. \qed

\end{itemize}

%$ S=\hat{S} $ can only hold, if two vertices with distance two in the path
%$ P_{2k+1} $ super dominate the same vertex. The equivalent condition follows.
%\end{itemize}
%By the claim, for each $ i \in \{1,2,\dots,k\} $ exactly one (of four) 
%super dominating set is counted twice. 

%If a super dominating set occurs for multiple 
%different choices of $w$, then it means that $w$ super dominates vertex $x$ but 
%$x$ is also super dominated by another vertex $y$. This occurs exactly when 
%$v_{2i}\in\overline{S}$ and $v_{2i+2}\in S$, or $v_{2i}\in S$ and $v_{2i+2}\in 
%\overline{S}$. Hence, for each choice of $w$, there are two cases which are 
%counted twice. Furthermore, when $w=v_{3}$ one of these cases was already 
%considered with the choice $w=v_1$ and when $w=v_{2k-1}$ one of these cases was 
%considered with the choice $w=v_{2k+1}$. Thus, in total between this and 
%previous case, where $w$ was an end-vertex, we have $4k$ super dominating sets. 
%However, 2k of these super dominating sets two are counted twice. 
\end{proof}

In Theorem \ref{Cycle-counting}, we determine the value $N_{sp}(C_n)$ for each $n$. 
Interestingly, the value varies quite a lot based on $n \pmod 4$. We will utilize necklace combinatorics for the proof of  Case (c). See \cite{NecklaceAlg} for an algorithm to calculate the value $N_n(q_1,q_2,\dots,q_k)$ in the following definition as well as some connections between necklaces and combinatorics on words.

\begin{definition}\label{def:necklace}
Let $ k, n, q_1, q_2,\dots,q_k \in \N $ with 
$ n = \sum_{i=1}^k q_i $.
\item[(a)]
A $(q_1,q_2,\dots,q_k)$-\textit{necklace} of length 
$n$ consists of a total of $n$ beads where we have $q_i$ beads of type $i$. The beads are placed into a cycle (necklace).

\item[(b)]
$N_n(q_1,q_2,\dots,q_k)$ is defined as
the number of different $(q_1,q_2,\dots,q_k)$-necklaces of length $n$, 
when the $n$ rotations of a necklace are considered as the same necklace. 
\end{definition}

The following relation is easy to see.
\begin{eqnarray}
\label{eq1_necklace}
N_{k+1}(k,1) & = & 1 
\text{ for } k \in \N.
\end{eqnarray}
Furthermore, we have 
\begin{eqnarray}
\label{eq2_necklace}
N_{2k+2}(2k,2)=k+1 
\text{ for } k \in \N
\text{ and } N_{2k+3}(2k+1,2)=k+1
\text{ for } k \in \N_0.
\end{eqnarray} 
Indeed, in Eq.~\eqref{eq2_necklace},
let us say that we have two red beads in the necklace and 
the other beads are blue. Since the rotations of the necklaces are not counted 
multiple times, each necklace is uniquely determined by the smallest number of 
blue beads between the two red beads.

\begin{theorem}\label{Cycle-counting}
Let $C_n$ be the cycle graph of order $n\geq 3$.  Then 
\begin{displaymath}
N_{sp}(C_n)= \left\{ \begin{array}{ll}
4 & \textrm{if $n \equiv 0 \pmod 4$, }\\
\\
2n & \textrm{if $n \equiv 1 \pmod 4$,}\\
\\
\frac{5n^2-10n}{8} & \textrm{if $n \equiv 2 \pmod 4$,}\\
\\
n & \textrm{if $n \equiv 3 \pmod 4$.}
\end{array} \right.
\end{displaymath}
\end{theorem}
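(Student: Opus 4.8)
The plan is to treat each residue class of $n \bmod 4$ separately, in all four cases reducing the enumeration of minimum super dominating sets to a coloring/necklace problem. First I would record, from Theorem~\ref{thm-2}(b), that $\gamma_{sp}(C_n)$ equals $2k,2k+1,2k+2,2k+2$ when $n=4k,4k+1,4k+2,4k+3$, so a minimum super dominating set $S$ has $|\overline{S}|=2k,2k,2k,2k+1$ white vertices respectively. Applying Lemma~\ref{Lem:Partition}, the set $D=S\cap S'$ has size $|S|-|\overline{S}|$, which is $0,1,2,1$ in the four cases; this single invariant already predicts the qualitative answer (constant, linear, quadratic, linear). The key translation step is to describe a minimum super dominating set by the cyclic sequence of its maximal monochromatic runs: a short argument from the definition shows that $S$ is a super dominating set iff every white run has length at most $2$, every length-$2$ white run is flanked by two black runs of length $\ge 2$, and every length-$1$ white run has at least one adjacent black run of length $\ge 2$ (equivalently, every length-$1$ black run is flanked by two length-$1$ white runs). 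Minimality fixes the total black and white lengths, so counting minimum super dominating sets becomes counting cyclic run-sequences with these constraints, placed on the labeled vertex set $\mathbb{Z}_n$.

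For $n\equiv 0$ we have $|S|=n/2$, so Lemma~\ref{Lem:Partition}(c) forces each black vertex to super dominate exactly one white vertex and conversely; hence every run has length exactly $2$, the coloring is a $\mathtt{BBWW}$-pattern, and these are exactly the two perfect domino tilings of $C_n$, each admitting two alternating colourings, giving $4$. For $n\equiv 3$ there is a single defect ($|D|=1$): the admissible run-multisets reduce to one type, all runs of length $2$ except one white run of length $1$. For $n\equiv 1$ (also $|D|=1$) there are two types, all white runs of length $2$ together with either one black run of length $3$ or one black run of length $1$ carrying the forced local pattern $\mathtt{W}\,\mathtt{B}\,\mathtt{W}$. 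In each case the necklace carries a unique distinguishing feature, so it is rotationally aperiodic and its orbit under the rotation action of $\mathbb{Z}_n$ has size exactly $n$; this gives $N_{sp}(C_n)=n$ for $n\equiv 3$ and $N_{sp}(C_n)=2n$ for $n\equiv 1$.

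The substantial case is $n\equiv 2$, where $|D|=2$ creates a genuinely two-parameter family. Writing $p$ for the number of length-$1$ white runs, $s$ for the number of length-$1$ black runs, and $E=\sum_{b_i\ge 3}(b_i-2)$ for the total excess of the long black runs, the length bookkeeping gives $E=2-p+s\ge 0$, while the constraint that each length-$1$ black run needs two private length-$1$ white neighbours forces $p\ge 2s$; together these yield $p\in\{0,2,4\}$ and pin down exactly five admissible run-multisets. For each type I would count the labeled colorings by a marking bijection: marking one black run shows that the number of colorings equals $\tfrac{n}{m}$ times the number of constraint-respecting cyclic run-orders, where $m$ is the number of runs of each colour, and this in turn is evaluated through the two-bead necklace counts $N_N(N-2,2)=\lfloor N/2\rfloor+1$ of \eqref{eq2_necklace} (together with \eqref{eq1_necklace}). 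Summing the five contributions, namely $n$, $\tfrac{n(k-1)}{2}$, $\tfrac{nk}{2}$, $nk$, and $\tfrac{n(k-1)}{2}$, gives $\tfrac{5nk}{2}=5k(2k+1)=\tfrac{5n^2-10n}{8}$.

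The main obstacle will be Case (c): carrying out the five-way enumeration so that it is provably complete and non-redundant, and, most delicately, getting the orbit sizes right. Two of the five types contain rotationally periodic necklaces (for instance the symmetric ``two equal arms'' configuration that occurs when $k$ is even), whose orbit has size $n/2$ rather than $n$; these must be separated out, since miscounting them by a factor of two is exactly the sort of error that would spoil the clean closed form. Packaging the arrangement counts through the necklace numbers $N_n(\cdot,\cdot)$ is precisely what keeps this bookkeeping under control, which is why necklace combinatorics enters here and not in the other three cases.
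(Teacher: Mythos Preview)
Your proposal is correct and follows essentially the same route as the paper: split by $n\bmod 4$, use Lemma~\ref{Lem:Partition} to read off $|D|\in\{0,1,2,1\}$, translate minimum super dominating sets into cyclic run-sequences, and in the hard case $n\equiv 2$ enumerate five admissible run-multisets via necklace counts with the periodic configurations handled separately. Your $(s,p,E)$ bookkeeping is a slightly cleaner way to arrive at the same five types the paper lists as (i)--(v), and your five contributions $n,\ \tfrac{n(k-1)}{2},\ \tfrac{nk}{2},\ nk,\ \tfrac{n(k-1)}{2}$ are exactly the paper's counts reordered; one small caveat is that your parenthetical ``equivalently'' in the run characterisation is not literally an equivalence (the length-$1$ black-run condition follows already from the length-$2$ white-run condition and does not by itself prevent a $\mathtt{WBWBW}$ block), but you use the correct condition in the actual argument, so this does not affect the proof.
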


\begin{proof}
Let $V=\{v_1,v_2,\ldots,v_n\}$ be the vertex set of $C_n$ and $S$ be a minimum 
size super dominating set for that. We consider the following 
cases:
\begin{itemize}
\item[(a)] $ n \equiv 0 \pmod 4 $.

Let $n=4k$ with $ k\in \N $. 
By Theorem~\ref{thm-2}(b), we have  $\gamma_{sp}(C_{4k})=2k$. 
Let $S$ be a super dominating set of cardinality $2k$ in $C_n$. By 
Lemma~\ref{Lem:Partition}, there is another super dominating set $ \overline{S}$ 
with same cardinality as $S$, $D=\emptyset $, 
and there is a bijective function $ f:\overline{S'} \to \overline{S} $.
Thus, we cannot have $3$ consecutive vertices in $S$ and 
for each $v\in S =\overline{S'}$, there exists a unique $u\in \overline{S} = S'$ such that $N(v)\cap 
\overline{S}=\{u\}$ and for each $ v \in S'= \overline{S} $ a unique $u \in \overline{S'} = S $ 
such that $N(v)\cap \overline{S'}=\{u\}$. Hence, 
the following four sets are the only super dominating sets in $C_{4k}$:
\begin{align*}
S_1&=\{v_1,v_2,v_5,v_6,\ldots,v_{4k-3},v_{4k-2}\},\\
S_2&=\{v_2,v_3,v_6,v_7,\ldots,v_{4k-2},v_{4k-1}\},\\
S_3&=\{v_3,v_4,v_7,v_8,\ldots,v_{4k-1},v_{4k}\},\\
S_4&=\{v_1,v_4,v_5,v_8,\ldots,v_{4k-3},v_{4k}\}.
\end{align*}

Hence, $N_{sp}(C_{4k})=4$.

\item[(b)] $ n \equiv 1 \pmod 4 $.

Let $n=4k+1$ with $k \in \N $. 
By Theorem~\ref{thm-2}(b),  we have  $\gamma_{sp}(C_{4k+1})=2k+1$. Let 
$S$ be a super dominating set of size $2k+1$ in $C_n$. First, notice that 
$|S|=|\overline{S}|+1$. Thus, if we consider Lemma~\ref{Lem:Partition} and the set 
$D$, we have $|D|=1$. Without loss of generality, $v_{4k+1}\in D$ holds. If we now 
contract the edge $v_{4k} v_{4k+1}$, then we get a cycle $C_{4k}$ in which 
$S \setminus \{v_{4k+1}\}$ is a super dominating set of cardinality $2k$. If we now recall 
the structure of a minimum size super dominating set for the cycle $C_{4k}$, then we 
notice that there are two possibilities for $v_{4k+1}\in D$, namely either 
$N(v_{4k+1})\subseteq S$ or $N(v_{4k+1})\subseteq \overline{S}$. Hence, 
$v_{4k+1}$ is either one of $3$ consecutive vertices in $S$ or a single vertex 
in $S$ surrounded by vertices in $\overline{S}$. Moreover, in both cases, we may 
rotate the super dominating set around the cycle in $n$ different ways. Thus, 
$N_{sp}(C_{4k+1})=2n$ as claimed.

\item[(c)] $ n \equiv 2 \pmod 4 $.

Let  $n=4k+2=8q+4p+2$ where $q \in \N_0, k \in \N $ and $p \in \{0,1\} $. 
By Theorem~\ref{thm-2}(b),  we have  $\gamma_{sp}(C_{4k+2})=2k+2$.  Since 
choosing any four vertices from $C_6$ gives us a super dominating set, 
$\binom{6}{4} = 15 =  (5 \cdot 6^2 - 10 \cdot 6) /8 $ is the number of super dominating sets, and it shows that 
the formula holds for $n=6$.
So, let now $n\geq 10$.

In the following, we write about $k$ \textit{consecutive vertices} in a minimum 
size super 
dominating set  $S$ implying that there is not a subset of $k+1$ 
consecutive vertices in~$S$. We claim that the following cases cannot occur for vertices in $S$: 
\begin{itemize}
\item At least $5$ consecutive vertices, 
\item twice $4$ consecutive vertices, 
\item once $4$ consecutive vertices and once $1$ consecutive vertex,
\item once $4$ consecutive vertices and once $3$ consecutive vertices, 
\item three times $3$ consecutive vertices, 
\item twice $3$ consecutive vertices and once $1$ consecutive vertex, 
\item once $3$ consecutive vertices and twice $1$ consecutive vertex, 
%\item $A_1\subseteq S$, $A_2\subseteq S$, 
%$A_3\subseteq S$ when  $A_i \cap A_j = \emptyset$,  $A_i$ has 
%consecutive vertices and $|A_i|\geq 3$ for  each $i\neq j$ where $i,j=1,2,3$, 
\item three times $1$ consecutive vertex.
%i.e., there are at most two vertices in $S$ which have their neighbourhoods in $\overline{S}$. 
\end{itemize}
The non-existence of all these cases can be shown in the same way.
By Lemma~\ref{Lem:Partition}, we have $|D| = 2$.
On the other hand, if one of these cases occurred, then we would have at least three black 
vertices surrounded either by two black vertices or by two white vertices. 
As these vertices do not super dominate other vertices, it follows 
that $ |D| \ge 3 $ leading to a contradiction.

As we are in the case $ n \equiv 2 \pmod 4 $,
it remains to consider the following cases for vertices 
in $S$, where we do not list the occurrences
of $2$ consecutive vertices.
\begin{itemize}
\item[(i)] Exactly once $4$ consecutive vertices, 
\item[(ii)] exactly twice $3$ consecutive vertices, 
\item[(iii)] exactly once $3$ consecutive vertices and exactly once $1$ consecutive vertex,
\item[(iv)] exactly twice $1$ consecutive vertex,
%\item exactly once $1$ consecutive vertex,
\item[(v)] none of that, i.e., only $2$ consecutive vertices.
\end{itemize}

We have illustrated the minimum size super dominating sets for $C_{10}$ in 
Figure~\ref{Super-C10}.

\begin{figure}
\begin{center}
\psscalebox{0.6 0.6}
{
\begin{pspicture}(0,-6.835347)(15.69,5.295347)
\psdots[linecolor=black, dotsize=0.2](1.9,3.6653473)
\psdots[linecolor=black, dotsize=0.2](2.3,3.6653473)
\psdots[linecolor=black, dotsize=0.2](2.7,3.6653473)
\psdots[linecolor=black, dotsize=0.2](3.1,3.6653473)
\psdots[linecolor=black, dotsize=0.2](2.3,3.2653472)
\psdots[linecolor=black, dotsize=0.2](2.7,3.2653472)
\psdots[linecolor=black, dotsize=0.2](3.1,3.2653472)
\psdots[linecolor=black, dotsize=0.2](3.5,3.2653472)
\psdots[linecolor=black, dotsize=0.2](2.7,2.8653474)
\psdots[linecolor=black, dotsize=0.2](3.1,2.8653474)
\psdots[linecolor=black, dotsize=0.2](3.5,2.8653474)
\psdots[linecolor=black, dotsize=0.2](3.9,2.8653474)
\psdots[linecolor=black, dotsize=0.2](3.1,2.4653473)
\psdots[linecolor=black, dotsize=0.2](3.5,2.4653473)
\psdots[linecolor=black, dotsize=0.2](3.9,2.4653473)
\psdots[linecolor=black, dotsize=0.2](4.3,2.4653473)
\psdots[linecolor=black, dotsize=0.2](3.5,2.0653472)
\psdots[linecolor=black, dotsize=0.2](3.9,2.0653472)
\psdots[linecolor=black, dotsize=0.2](4.3,2.0653472)
\psdots[linecolor=black, dotsize=0.2](4.7,2.0653472)
\psdots[linecolor=black, dotsize=0.2](3.9,1.6653473)
\psdots[linecolor=black, dotsize=0.2](4.3,1.6653473)
\psdots[linecolor=black, dotsize=0.2](4.7,1.6653473)
\psdots[linecolor=black, dotsize=0.2](5.1,1.6653473)
\psdots[linecolor=black, dotsize=0.2](4.3,1.2653472)
\psdots[linecolor=black, dotsize=0.2](4.7,1.2653472)
\psdots[linecolor=black, dotsize=0.2](5.1,1.2653472)
\psdots[linecolor=black, dotsize=0.2](5.5,1.2653472)
\psdots[linecolor=black, dotsize=0.2](4.7,0.86534727)
\psdots[linecolor=black, dotsize=0.2](5.1,0.86534727)
\psdots[linecolor=black, dotsize=0.2](5.5,0.86534727)
\psdots[linecolor=black, dotsize=0.2](1.9,0.86534727)
\psdots[linecolor=black, dotsize=0.2](5.1,0.4653473)
\psdots[linecolor=black, dotsize=0.2](5.5,0.4653473)
\psdots[linecolor=black, dotsize=0.2](1.9,0.4653473)
\psdots[linecolor=black, dotsize=0.2](2.3,0.4653473)
\psdots[linecolor=black, dotsize=0.2](5.5,0.06534729)
\psdots[linecolor=black, dotsize=0.2](1.9,0.06534729)
\psdots[linecolor=black, dotsize=0.2](2.3,0.06534729)
\psdots[linecolor=black, dotsize=0.2](2.7,0.06534729)
\psdots[linecolor=black, dotsize=0.2](4.3,3.6653473)
\psdots[linecolor=black, dotsize=0.2](4.7,3.6653473)
\psdots[linecolor=black, dotsize=0.2](4.7,3.2653472)
\psdots[linecolor=black, dotsize=0.2](5.1,3.2653472)
\psdots[linecolor=black, dotsize=0.2](5.1,2.8653474)
\psdots[linecolor=black, dotsize=0.2](5.5,2.8653474)
\psdots[linecolor=black, dotsize=0.2](5.5,2.4653473)
\psdots[linecolor=black, dotsize=0.2](1.9,2.4653473)
\psdots[linecolor=black, dotsize=0.2](1.9,2.0653472)
\psdots[linecolor=black, dotsize=0.2](2.3,2.0653472)
\psdots[linecolor=black, dotsize=0.2](2.3,1.6653473)
\psdots[linecolor=black, dotsize=0.2](2.7,1.6653473)
\psdots[linecolor=black, dotsize=0.2](2.7,1.2653472)
\psdots[linecolor=black, dotsize=0.2](3.1,1.2653472)
\psdots[linecolor=black, dotsize=0.2](3.1,0.86534727)
\psdots[linecolor=black, dotsize=0.2](3.5,0.86534727)
\psdots[linecolor=black, dotsize=0.2](3.5,0.4653473)
\psdots[linecolor=black, dotsize=0.2](3.9,0.4653473)
\psdots[linecolor=black, dotsize=0.2](3.9,0.06534729)
\psdots[linecolor=black, dotsize=0.2](4.3,0.06534729)
\psdots[linecolor=black, dotsize=0.2](1.9,-0.7346527)
\psdots[linecolor=black, dotsize=0.2](2.3,-0.7346527)
\psdots[linecolor=black, dotsize=0.2](2.7,-0.7346527)
\psdots[linecolor=black, dotsize=0.2](3.9,-0.7346527)
\psdots[linecolor=black, dotsize=0.2](4.3,-0.7346527)
\psdots[linecolor=black, dotsize=0.2](4.7,-0.7346527)
\psdots[linecolor=black, dotsize=0.2](2.3,-1.1346527)
\psdots[linecolor=black, dotsize=0.2](2.7,-1.1346527)
\psdots[linecolor=black, dotsize=0.2](3.1,-1.1346527)
\psdots[linecolor=black, dotsize=0.2](4.3,-1.1346527)
\psdots[linecolor=black, dotsize=0.2](4.7,-1.1346527)
\psdots[linecolor=black, dotsize=0.2](5.1,-1.1346527)
\psdots[linecolor=black, dotsize=0.2](2.7,-1.5346527)
\psdots[linecolor=black, dotsize=0.2](3.1,-1.5346527)
\psdots[linecolor=black, dotsize=0.2](3.5,-1.5346527)
\psdots[linecolor=black, dotsize=0.2](4.7,-1.5346527)
\psdots[linecolor=black, dotsize=0.2](5.1,-1.5346527)
\psdots[linecolor=black, dotsize=0.2](5.5,-1.5346527)
\psdots[linecolor=black, dotsize=0.2](3.1,-1.9346527)
\psdots[linecolor=black, dotsize=0.2](3.5,-1.9346527)
\psdots[linecolor=black, dotsize=0.2](3.9,-1.9346527)
\psdots[linecolor=black, dotsize=0.2](5.1,-1.9346527)
\psdots[linecolor=black, dotsize=0.2](5.5,-1.9346527)
\psdots[linecolor=black, dotsize=0.2](1.9,-1.9346527)
\psdots[linecolor=black, dotsize=0.2](3.5,-2.3346527)
\psdots[linecolor=black, dotsize=0.2](3.9,-2.3346527)
\psdots[linecolor=black, dotsize=0.2](4.3,-2.3346527)
\psdots[linecolor=black, dotsize=0.2](5.5,-2.3346527)
\psdots[linecolor=black, dotsize=0.2](1.9,-2.3346527)
\psdots[linecolor=black, dotsize=0.2](2.3,-2.3346527)
\psdots[linecolor=black, dotsize=0.2](9.9,3.6653473)
\psdots[linecolor=black, dotsize=0.2](10.3,3.6653473)
\psdots[linecolor=black, dotsize=0.2](10.7,3.6653473)
\psdots[linecolor=black, dotsize=0.2](10.3,3.2653472)
\psdots[linecolor=black, dotsize=0.2](10.7,3.2653472)
\psdots[linecolor=black, dotsize=0.2](11.1,3.2653472)
\psdots[linecolor=black, dotsize=0.2](10.7,2.8653474)
\psdots[linecolor=black, dotsize=0.2](11.1,2.8653474)
\psdots[linecolor=black, dotsize=0.2](11.5,2.8653474)
\psdots[linecolor=black, dotsize=0.2](11.1,2.4653473)
\psdots[linecolor=black, dotsize=0.2](11.5,2.4653473)
\psdots[linecolor=black, dotsize=0.2](11.9,2.4653473)
\psdots[linecolor=black, dotsize=0.2](11.5,2.0653472)
\psdots[linecolor=black, dotsize=0.2](11.9,2.0653472)
\psdots[linecolor=black, dotsize=0.2](12.3,2.0653472)
\psdots[linecolor=black, dotsize=0.2](11.9,1.6653473)
\psdots[linecolor=black, dotsize=0.2](12.3,1.6653473)
\psdots[linecolor=black, dotsize=0.2](12.7,1.6653473)
\psdots[linecolor=black, dotsize=0.2](12.3,1.2653472)
\psdots[linecolor=black, dotsize=0.2](12.7,1.2653472)
\psdots[linecolor=black, dotsize=0.2](13.1,1.2653472)
\psdots[linecolor=black, dotsize=0.2](12.7,0.86534727)
\psdots[linecolor=black, dotsize=0.2](13.1,0.86534727)
\psdots[linecolor=black, dotsize=0.2](13.5,0.86534727)
\psdots[linecolor=black, dotsize=0.2](13.1,0.4653473)
\psdots[linecolor=black, dotsize=0.2](13.5,0.4653473)
\psdots[linecolor=black, dotsize=0.2](9.9,0.4653473)
\psdots[linecolor=black, dotsize=0.2](13.5,0.06534729)
\psdots[linecolor=black, dotsize=0.2](9.9,0.06534729)
\psdots[linecolor=black, dotsize=0.2](10.3,0.06534729)
\psdots[linecolor=black, dotsize=0.2](1.9,-3.1346526)
\psdots[linecolor=black, dotsize=0.2](2.3,-3.1346526)
\psdots[linecolor=black, dotsize=0.2](2.7,-3.1346526)
\psdots[linecolor=black, dotsize=0.2](2.3,-3.5346527)
\psdots[linecolor=black, dotsize=0.2](2.7,-3.5346527)
\psdots[linecolor=black, dotsize=0.2](3.1,-3.5346527)
\psdots[linecolor=black, dotsize=0.2](2.7,-3.9346528)
\psdots[linecolor=black, dotsize=0.2](3.1,-3.9346528)
\psdots[linecolor=black, dotsize=0.2](3.5,-3.9346528)
\psdots[linecolor=black, dotsize=0.2](3.1,-4.334653)
\psdots[linecolor=black, dotsize=0.2](3.5,-4.334653)
\psdots[linecolor=black, dotsize=0.2](3.9,-4.334653)
\psdots[linecolor=black, dotsize=0.2](3.5,-4.7346525)
\psdots[linecolor=black, dotsize=0.2](3.9,-4.7346525)
\psdots[linecolor=black, dotsize=0.2](4.3,-4.7346525)
\psdots[linecolor=black, dotsize=0.2](3.9,-5.1346526)
\psdots[linecolor=black, dotsize=0.2](4.3,-5.1346526)
\psdots[linecolor=black, dotsize=0.2](4.7,-5.1346526)
\psdots[linecolor=black, dotsize=0.2](4.3,-5.5346527)
\psdots[linecolor=black, dotsize=0.2](4.7,-5.5346527)
\psdots[linecolor=black, dotsize=0.2](5.1,-5.5346527)
\psdots[linecolor=black, dotsize=0.2](4.7,-5.934653)
\psdots[linecolor=black, dotsize=0.2](5.1,-5.934653)
\psdots[linecolor=black, dotsize=0.2](5.5,-5.934653)
\psdots[linecolor=black, dotsize=0.2](5.1,-6.334653)
\psdots[linecolor=black, dotsize=0.2](5.5,-6.334653)
\psdots[linecolor=black, dotsize=0.2](1.9,-6.334653)
\psdots[linecolor=black, dotsize=0.2](5.5,-6.7346525)
\psdots[linecolor=black, dotsize=0.2](1.9,-6.7346525)
\psdots[linecolor=black, dotsize=0.2](2.3,-6.7346525)
\psdots[linecolor=black, dotsize=0.2](11.9,3.6653473)
\psdots[linecolor=black, dotsize=0.2](12.3,3.6653473)
\psdots[linecolor=black, dotsize=0.2](12.3,3.2653472)
\psdots[linecolor=black, dotsize=0.2](12.7,3.2653472)
\psdots[linecolor=black, dotsize=0.2](12.7,2.8653474)
\psdots[linecolor=black, dotsize=0.2](13.1,2.8653474)
\psdots[linecolor=black, dotsize=0.2](13.1,2.4653473)
\psdots[linecolor=black, dotsize=0.2](13.5,2.4653473)
\psdots[linecolor=black, dotsize=0.2](13.5,2.0653472)
\psdots[linecolor=black, dotsize=0.2](9.9,2.0653472)
\psdots[linecolor=black, dotsize=0.2](9.9,1.6653473)
\psdots[linecolor=black, dotsize=0.2](10.3,1.6653473)
\psdots[linecolor=black, dotsize=0.2](10.3,1.2653472)
\psdots[linecolor=black, dotsize=0.2](10.7,1.2653472)
\psdots[linecolor=black, dotsize=0.2](10.7,0.86534727)
\psdots[linecolor=black, dotsize=0.2](11.1,0.86534727)
\psdots[linecolor=black, dotsize=0.2](11.1,0.4653473)
\psdots[linecolor=black, dotsize=0.2](11.5,0.4653473)
\psdots[linecolor=black, dotsize=0.2](11.5,0.06534729)
\psdots[linecolor=black, dotsize=0.2](11.9,0.06534729)
\psdots[linecolor=black, dotsize=0.2](4.7,-3.1346526)
\psdots[linecolor=black, dotsize=0.2](4.3,-3.1346526)
\psdots[linecolor=black, dotsize=0.2](3.5,-3.1346526)
\psdots[linecolor=black, dotsize=0.2](5.1,-3.5346527)
\psdots[linecolor=black, dotsize=0.2](4.7,-3.5346527)
\psdots[linecolor=black, dotsize=0.2](3.9,-3.5346527)
\psdots[linecolor=black, dotsize=0.2](5.5,-3.9346528)
\psdots[linecolor=black, dotsize=0.2](5.1,-3.9346528)
\psdots[linecolor=black, dotsize=0.2](4.3,-3.9346528)
\psdots[linecolor=black, dotsize=0.2](1.9,-4.334653)
\psdots[linecolor=black, dotsize=0.2](5.5,-4.334653)
\psdots[linecolor=black, dotsize=0.2](4.7,-4.334653)
\psdots[linecolor=black, dotsize=0.2](2.3,-4.7346525)
\psdots[linecolor=black, dotsize=0.2](1.9,-4.7346525)
\psdots[linecolor=black, dotsize=0.2](5.1,-4.7346525)
\psdots[linecolor=black, dotsize=0.2](2.7,-5.1346526)
\psdots[linecolor=black, dotsize=0.2](2.3,-5.1346526)
\psdots[linecolor=black, dotsize=0.2](5.5,-5.1346526)
\psdots[linecolor=black, dotsize=0.2](3.1,-5.5346527)
\psdots[linecolor=black, dotsize=0.2](2.7,-5.5346527)
\psdots[linecolor=black, dotsize=0.2](1.9,-5.5346527)
\psdots[linecolor=black, dotsize=0.2](3.5,-5.934653)
\psdots[linecolor=black, dotsize=0.2](3.1,-5.934653)
\psdots[linecolor=black, dotsize=0.2](2.3,-5.934653)
\psdots[linecolor=black, dotsize=0.2](3.9,-6.334653)
\psdots[linecolor=black, dotsize=0.2](3.5,-6.334653)
\psdots[linecolor=black, dotsize=0.2](2.7,-6.334653)
\psdots[linecolor=black, dotsize=0.2](4.3,-6.7346525)
\psdots[linecolor=black, dotsize=0.2](3.9,-6.7346525)
\psdots[linecolor=black, dotsize=0.2](3.1,-6.7346525)
\psdots[linecolor=black, dotsize=0.2](13.1,3.6653473)
\psdots[linecolor=black, dotsize=0.2](13.5,3.2653472)
\psdots[linecolor=black, dotsize=0.2](9.9,2.8653474)
\psdots[linecolor=black, dotsize=0.2](10.3,2.4653473)
\psdots[linecolor=black, dotsize=0.2](10.7,2.0653472)
\psdots[linecolor=black, dotsize=0.2](11.1,1.6653473)
\psdots[linecolor=black, dotsize=0.2](11.5,1.2653472)
\psdots[linecolor=black, dotsize=0.2](11.9,0.86534727)
\psdots[linecolor=black, dotsize=0.2](12.3,0.4653473)
\psdots[linecolor=black, dotsize=0.2](12.7,0.06534729)
\psdots[linecolor=black, dotsize=0.2](9.9,-3.1346526)
\psdots[linecolor=black, dotsize=0.2](10.3,-3.1346526)
\psdots[linecolor=black, dotsize=0.2](11.1,-3.1346526)
\psdots[linecolor=black, dotsize=0.2](11.5,-3.1346526)
\psdots[linecolor=black, dotsize=0.2](12.3,-3.1346526)
\psdots[linecolor=black, dotsize=0.2](12.7,-3.1346526)
\psdots[linecolor=black, dotsize=0.2](10.3,-3.5346527)
\psdots[linecolor=black, dotsize=0.2](10.7,-3.5346527)
\psdots[linecolor=black, dotsize=0.2](11.5,-3.5346527)
\psdots[linecolor=black, dotsize=0.2](11.9,-3.5346527)
\psdots[linecolor=black, dotsize=0.2](12.7,-3.5346527)
\psdots[linecolor=black, dotsize=0.2](13.1,-3.5346527)
\psdots[linecolor=black, dotsize=0.2](10.7,-3.9346528)
\psdots[linecolor=black, dotsize=0.2](11.1,-3.9346528)
\psdots[linecolor=black, dotsize=0.2](11.9,-3.9346528)
\psdots[linecolor=black, dotsize=0.2](12.3,-3.9346528)
\psdots[linecolor=black, dotsize=0.2](13.1,-3.9346528)
\psdots[linecolor=black, dotsize=0.2](13.5,-3.9346528)
\psdots[linecolor=black, dotsize=0.2](11.1,-4.334653)
\psdots[linecolor=black, dotsize=0.2](11.5,-4.334653)
\psdots[linecolor=black, dotsize=0.2](12.3,-4.334653)
\psdots[linecolor=black, dotsize=0.2](12.7,-4.334653)
\psdots[linecolor=black, dotsize=0.2](13.5,-4.334653)
\psdots[linecolor=black, dotsize=0.2](9.9,-4.334653)
\psdots[linecolor=black, dotsize=0.2](11.5,-4.7346525)
\psdots[linecolor=black, dotsize=0.2](11.9,-4.7346525)
\psdots[linecolor=black, dotsize=0.2](12.7,-4.7346525)
\psdots[linecolor=black, dotsize=0.2](13.1,-4.7346525)
\psdots[linecolor=black, dotsize=0.2](9.9,-4.7346525)
\psdots[linecolor=black, dotsize=0.2](10.3,-4.7346525)
\psdots[linecolor=black, dotsize=0.2](11.9,-5.1346526)
\psdots[linecolor=black, dotsize=0.2](12.3,-5.1346526)
\psdots[linecolor=black, dotsize=0.2](13.1,-5.1346526)
\psdots[linecolor=black, dotsize=0.2](13.5,-5.1346526)
\psdots[linecolor=black, dotsize=0.2](10.3,-5.1346526)
\psdots[linecolor=black, dotsize=0.2](10.7,-5.1346526)
\psdots[linecolor=black, dotsize=0.2](12.3,-5.5346527)
\psdots[linecolor=black, dotsize=0.2](12.7,-5.5346527)
\psdots[linecolor=black, dotsize=0.2](13.5,-5.5346527)
\psdots[linecolor=black, dotsize=0.2](9.9,-5.5346527)
\psdots[linecolor=black, dotsize=0.2](10.7,-5.5346527)
\psdots[linecolor=black, dotsize=0.2](11.1,-5.5346527)
\psdots[linecolor=black, dotsize=0.2](12.7,-5.934653)
\psdots[linecolor=black, dotsize=0.2](13.1,-5.934653)
\psdots[linecolor=black, dotsize=0.2](9.9,-5.934653)
\psdots[linecolor=black, dotsize=0.2](10.3,-5.934653)
\psdots[linecolor=black, dotsize=0.2](11.1,-5.934653)
\psdots[linecolor=black, dotsize=0.2](11.5,-5.934653)
\psdots[linecolor=black, dotsize=0.2](13.1,-6.334653)
\psdots[linecolor=black, dotsize=0.2](13.5,-6.334653)
\psdots[linecolor=black, dotsize=0.2](10.3,-6.334653)
\psdots[linecolor=black, dotsize=0.2](10.7,-6.334653)
\psdots[linecolor=black, dotsize=0.2](11.5,-6.334653)
\psdots[linecolor=black, dotsize=0.2](11.9,-6.334653)
\psdots[linecolor=black, dotsize=0.2](13.5,-6.7346525)
\psdots[linecolor=black, dotsize=0.2](9.9,-6.7346525)
\psdots[linecolor=black, dotsize=0.2](10.7,-6.7346525)
\psdots[linecolor=black, dotsize=0.2](11.1,-6.7346525)
\psdots[linecolor=black, dotsize=0.2](11.9,-6.7346525)
\psdots[linecolor=black, dotsize=0.2](12.3,-6.7346525)
\psline[linecolor=black, linewidth=0.04](1.1,-0.33465272)(6.3,-0.33465272)(6.3,-0.33465272)
\psline[linecolor=black, linewidth=0.04](9.1,-2.7346528)(14.3,-2.7346528)(14.3,-2.7346528)
\psline[linecolor=black, linewidth=0.04](9.1,-0.33465272)(14.3,-0.33465272)(14.3,-0.33465272)
\psdots[linecolor=black, fillstyle=solid, dotstyle=o, dotsize=0.2, fillcolor=white](3.5,3.6653473)
\psdots[linecolor=black, fillstyle=solid, dotstyle=o, dotsize=0.2, fillcolor=white](3.9,3.6653473)
\psdots[linecolor=black, fillstyle=solid, dotstyle=o, dotsize=0.2, fillcolor=white](3.9,3.2653472)
\psdots[linecolor=black, fillstyle=solid, dotstyle=o, dotsize=0.2, fillcolor=white](4.3,3.2653472)
\psdots[linecolor=black, fillstyle=solid, dotstyle=o, dotsize=0.2, fillcolor=white](4.3,2.8653474)
\psdots[linecolor=black, fillstyle=solid, dotstyle=o, dotsize=0.2, fillcolor=white](4.7,2.8653474)
\psdots[linecolor=black, fillstyle=solid, dotstyle=o, dotsize=0.2, fillcolor=white](4.7,2.4653473)
\psdots[linecolor=black, fillstyle=solid, dotstyle=o, dotsize=0.2, fillcolor=white](5.1,2.4653473)
\psdots[linecolor=black, fillstyle=solid, dotstyle=o, dotsize=0.2, fillcolor=white](5.1,2.0653472)
\psdots[linecolor=black, fillstyle=solid, dotstyle=o, dotsize=0.2, fillcolor=white](5.5,2.0653472)
\psdots[linecolor=black, fillstyle=solid, dotstyle=o, dotsize=0.2, fillcolor=white](5.5,1.6653473)
\psdots[linecolor=black, fillstyle=solid, dotstyle=o, dotsize=0.2, fillcolor=white](1.9,1.6653473)
\psdots[linecolor=black, fillstyle=solid, dotstyle=o, dotsize=0.2, fillcolor=white](5.1,3.6653473)
\psdots[linecolor=black, fillstyle=solid, dotstyle=o, dotsize=0.2, fillcolor=white](5.5,3.6653473)
\psdots[linecolor=black, fillstyle=solid, dotstyle=o, dotsize=0.2, fillcolor=white](5.5,3.2653472)
\psdots[linecolor=black, fillstyle=solid, dotstyle=o, dotsize=0.2, fillcolor=white](1.9,3.2653472)
\psdots[linecolor=black, fillstyle=solid, dotstyle=o, dotsize=0.2, fillcolor=white](1.9,2.8653474)
\psdots[linecolor=black, fillstyle=solid, dotstyle=o, dotsize=0.2, fillcolor=white](2.3,2.8653474)
\psdots[linecolor=black, fillstyle=solid, dotstyle=o, dotsize=0.2, fillcolor=white](2.3,2.4653473)
\psdots[linecolor=black, fillstyle=solid, dotstyle=o, dotsize=0.2, fillcolor=white](2.7,2.4653473)
\psdots[linecolor=black, fillstyle=solid, dotstyle=o, dotsize=0.2, fillcolor=white](2.7,2.0653472)
\psdots[linecolor=black, fillstyle=solid, dotstyle=o, dotsize=0.2, fillcolor=white](3.1,2.0653472)
\psdots[linecolor=black, fillstyle=solid, dotstyle=o, dotsize=0.2, fillcolor=white](3.1,1.6653473)
\psdots[linecolor=black, fillstyle=solid, dotstyle=o, dotsize=0.2, fillcolor=white](3.5,1.6653473)
\psdots[linecolor=black, fillstyle=solid, dotstyle=o, dotsize=0.2, fillcolor=white](3.5,1.2653472)
\psdots[linecolor=black, fillstyle=solid, dotstyle=o, dotsize=0.2, fillcolor=white](3.9,1.2653472)
\psdots[linecolor=black, fillstyle=solid, dotstyle=o, dotsize=0.2, fillcolor=white](3.9,0.86534727)
\psdots[linecolor=black, fillstyle=solid, dotstyle=o, dotsize=0.2, fillcolor=white](4.3,0.86534727)
\psdots[linecolor=black, fillstyle=solid, dotstyle=o, dotsize=0.2, fillcolor=white](4.3,0.4653473)
\psdots[linecolor=black, fillstyle=solid, dotstyle=o, dotsize=0.2, fillcolor=white](4.7,0.4653473)
\psdots[linecolor=black, fillstyle=solid, dotstyle=o, dotsize=0.2, fillcolor=white](4.7,0.06534729)
\psdots[linecolor=black, fillstyle=solid, dotstyle=o, dotsize=0.2, fillcolor=white](5.1,0.06534729)
\psdots[linecolor=black, fillstyle=solid, dotstyle=o, dotsize=0.2, fillcolor=white](1.9,1.2653472)
\psdots[linecolor=black, fillstyle=solid, dotstyle=o, dotsize=0.2, fillcolor=white](2.3,1.2653472)
\psdots[linecolor=black, fillstyle=solid, dotstyle=o, dotsize=0.2, fillcolor=white](2.3,0.86534727)
\psdots[linecolor=black, fillstyle=solid, dotstyle=o, dotsize=0.2, fillcolor=white](2.7,0.86534727)
\psdots[linecolor=black, fillstyle=solid, dotstyle=o, dotsize=0.2, fillcolor=white](2.7,0.4653473)
\psdots[linecolor=black, fillstyle=solid, dotstyle=o, dotsize=0.2, fillcolor=white](3.1,0.4653473)
\psdots[linecolor=black, fillstyle=solid, dotstyle=o, dotsize=0.2, fillcolor=white](3.1,0.06534729)
\psdots[linecolor=black, fillstyle=solid, dotstyle=o, dotsize=0.2, fillcolor=white](3.5,0.06534729)
\psdots[linecolor=black, fillstyle=solid, dotstyle=o, dotsize=0.2, fillcolor=white](3.1,-0.7346527)
\psdots[linecolor=black, fillstyle=solid, dotstyle=o, dotsize=0.2, fillcolor=white](3.5,-0.7346527)
\psdots[linecolor=black, fillstyle=solid, dotstyle=o, dotsize=0.2, fillcolor=white](3.5,-1.1346527)
\psdots[linecolor=black, fillstyle=solid, dotstyle=o, dotsize=0.2, fillcolor=white](3.9,-1.1346527)
\psdots[linecolor=black, fillstyle=solid, dotstyle=o, dotsize=0.2, fillcolor=white](3.9,-1.5346527)
\psdots[linecolor=black, fillstyle=solid, dotstyle=o, dotsize=0.2, fillcolor=white](4.3,-1.5346527)
\psdots[linecolor=black, fillstyle=solid, dotstyle=o, dotsize=0.2, fillcolor=white](4.3,-1.9346527)
\psdots[linecolor=black, fillstyle=solid, dotstyle=o, dotsize=0.2, fillcolor=white](4.7,-1.9346527)
\psdots[linecolor=black, fillstyle=solid, dotstyle=o, dotsize=0.2, fillcolor=white](4.7,-2.3346527)
\psdots[linecolor=black, fillstyle=solid, dotstyle=o, dotsize=0.2, fillcolor=white](5.1,-2.3346527)
\psdots[linecolor=black, fillstyle=solid, dotstyle=o, dotsize=0.2, fillcolor=white](1.9,-1.5346527)
\psdots[linecolor=black, fillstyle=solid, dotstyle=o, dotsize=0.2, fillcolor=white](2.3,-1.5346527)
\psdots[linecolor=black, fillstyle=solid, dotstyle=o, dotsize=0.2, fillcolor=white](2.3,-1.9346527)
\psdots[linecolor=black, fillstyle=solid, dotstyle=o, dotsize=0.2, fillcolor=white](2.7,-1.9346527)
\psdots[linecolor=black, fillstyle=solid, dotstyle=o, dotsize=0.2, fillcolor=white](2.7,-2.3346527)
\psdots[linecolor=black, fillstyle=solid, dotstyle=o, dotsize=0.2, fillcolor=white](3.1,-2.3346527)
\psdots[linecolor=black, fillstyle=solid, dotstyle=o, dotsize=0.2, fillcolor=white](5.1,-0.7346527)
\psdots[linecolor=black, fillstyle=solid, dotstyle=o, dotsize=0.2, fillcolor=white](5.5,-0.7346527)
\psdots[linecolor=black, fillstyle=solid, dotstyle=o, dotsize=0.2, fillcolor=white](5.5,-1.1346527)
\psdots[linecolor=black, fillstyle=solid, dotstyle=o, dotsize=0.2, fillcolor=white](1.9,-1.1346527)
\psdots[linecolor=black, fillstyle=solid, dotstyle=o, dotsize=0.2, fillcolor=white](11.1,3.6653473)
\psdots[linecolor=black, fillstyle=solid, dotstyle=o, dotsize=0.2, fillcolor=white](11.5,3.6653473)
\psdots[linecolor=black, fillstyle=solid, dotstyle=o, dotsize=0.2, fillcolor=white](11.5,3.2653472)
\psdots[linecolor=black, fillstyle=solid, dotstyle=o, dotsize=0.2, fillcolor=white](11.9,3.2653472)
\psdots[linecolor=black, fillstyle=solid, dotstyle=o, dotsize=0.2, fillcolor=white](11.9,2.8653474)
\psdots[linecolor=black, fillstyle=solid, dotstyle=o, dotsize=0.2, fillcolor=white](12.3,2.8653474)
\psdots[linecolor=black, fillstyle=solid, dotstyle=o, dotsize=0.2, fillcolor=white](12.3,2.4653473)
\psdots[linecolor=black, fillstyle=solid, dotstyle=o, dotsize=0.2, fillcolor=white](12.7,2.4653473)
\psdots[linecolor=black, fillstyle=solid, dotstyle=o, dotsize=0.2, fillcolor=white](12.7,2.0653472)
\psdots[linecolor=black, fillstyle=solid, dotstyle=o, dotsize=0.2, fillcolor=white](13.1,2.0653472)
\psdots[linecolor=black, fillstyle=solid, dotstyle=o, dotsize=0.2, fillcolor=white](13.1,1.6653473)
\psdots[linecolor=black, fillstyle=solid, dotstyle=o, dotsize=0.2, fillcolor=white](13.5,1.6653473)
\psdots[linecolor=black, fillstyle=solid, dotstyle=o, dotsize=0.2, fillcolor=white](13.5,1.2653472)
\psdots[linecolor=black, fillstyle=solid, dotstyle=o, dotsize=0.2, fillcolor=white](9.9,1.2653472)
\psdots[linecolor=black, fillstyle=solid, dotstyle=o, dotsize=0.2, fillcolor=white](9.9,0.86534727)
\psdots[linecolor=black, fillstyle=solid, dotstyle=o, dotsize=0.2, fillcolor=white](10.3,0.86534727)
\psdots[linecolor=black, fillstyle=solid, dotstyle=o, dotsize=0.2, fillcolor=white](10.3,0.4653473)
\psdots[linecolor=black, fillstyle=solid, dotstyle=o, dotsize=0.2, fillcolor=white](10.7,0.4653473)
\psdots[linecolor=black, fillstyle=solid, dotstyle=o, dotsize=0.2, fillcolor=white](10.7,0.06534729)
\psdots[linecolor=black, fillstyle=solid, dotstyle=o, dotsize=0.2, fillcolor=white](11.1,0.06534729)
\psdots[linecolor=black, fillstyle=solid, dotstyle=o, dotsize=0.2, fillcolor=white](12.7,3.6653473)
\psdots[linecolor=black, fillstyle=solid, dotstyle=o, dotsize=0.2, fillcolor=white](13.5,3.6653473)
\psdots[linecolor=black, fillstyle=solid, dotstyle=o, dotsize=0.2, fillcolor=white](13.1,3.2653472)
\psdots[linecolor=black, fillstyle=solid, dotstyle=o, dotsize=0.2, fillcolor=white](13.5,2.8653474)
\psdots[linecolor=black, fillstyle=solid, dotstyle=o, dotsize=0.2, fillcolor=white](9.9,3.2653472)
\psdots[linecolor=black, fillstyle=solid, dotstyle=o, dotsize=0.2, fillcolor=white](10.3,2.8653474)
\psdots[linecolor=black, fillstyle=solid, dotstyle=o, dotsize=0.2, fillcolor=white](10.7,2.4653473)
\psdots[linecolor=black, fillstyle=solid, dotstyle=o, dotsize=0.2, fillcolor=white](11.1,2.0653472)
\psdots[linecolor=black, fillstyle=solid, dotstyle=o, dotsize=0.2, fillcolor=white](11.5,1.6653473)
\psdots[linecolor=black, fillstyle=solid, dotstyle=o, dotsize=0.2, fillcolor=white](11.9,1.2653472)
\psdots[linecolor=black, fillstyle=solid, dotstyle=o, dotsize=0.2, fillcolor=white](12.3,0.86534727)
\psdots[linecolor=black, fillstyle=solid, dotstyle=o, dotsize=0.2, fillcolor=white](12.7,0.4653473)
\psdots[linecolor=black, fillstyle=solid, dotstyle=o, dotsize=0.2, fillcolor=white](13.1,0.06534729)
\psdots[linecolor=black, fillstyle=solid, dotstyle=o, dotsize=0.2, fillcolor=white](9.9,2.4653473)
\psdots[linecolor=black, fillstyle=solid, dotstyle=o, dotsize=0.2, fillcolor=white](10.3,2.0653472)
\psdots[linecolor=black, fillstyle=solid, dotstyle=o, dotsize=0.2, fillcolor=white](10.7,1.6653473)
\psdots[linecolor=black, fillstyle=solid, dotstyle=o, dotsize=0.2, fillcolor=white](11.1,1.2653472)
\psdots[linecolor=black, fillstyle=solid, dotstyle=o, dotsize=0.2, fillcolor=white](11.5,0.86534727)
\psdots[linecolor=black, fillstyle=solid, dotstyle=o, dotsize=0.2, fillcolor=white](11.9,0.4653473)
\psdots[linecolor=black, fillstyle=solid, dotstyle=o, dotsize=0.2, fillcolor=white](12.3,0.06534729)
\psdots[linecolor=black, fillstyle=solid, dotstyle=o, dotsize=0.2, fillcolor=white](5.1,-3.1346526)
\psdots[linecolor=black, fillstyle=solid, dotstyle=o, dotsize=0.2, fillcolor=white](5.5,-3.1346526)
\psdots[linecolor=black, fillstyle=solid, dotstyle=o, dotsize=0.2, fillcolor=white](5.5,-3.5346527)
\psdots[linecolor=black, fillstyle=solid, dotstyle=o, dotsize=0.2, fillcolor=white](1.9,-3.5346527)
\psdots[linecolor=black, fillstyle=solid, dotstyle=o, dotsize=0.2, fillcolor=white](1.9,-3.9346528)
\psdots[linecolor=black, fillstyle=solid, dotstyle=o, dotsize=0.2, fillcolor=white](2.3,-3.9346528)
\psdots[linecolor=black, fillstyle=solid, dotstyle=o, dotsize=0.2, fillcolor=white](2.3,-4.334653)
\psdots[linecolor=black, fillstyle=solid, dotstyle=o, dotsize=0.2, fillcolor=white](2.7,-4.334653)
\psdots[linecolor=black, fillstyle=solid, dotstyle=o, dotsize=0.2, fillcolor=white](2.7,-4.7346525)
\psdots[linecolor=black, fillstyle=solid, dotstyle=o, dotsize=0.2, fillcolor=white](3.1,-4.7346525)
\psdots[linecolor=black, fillstyle=solid, dotstyle=o, dotsize=0.2, fillcolor=white](3.1,-5.1346526)
\psdots[linecolor=black, fillstyle=solid, dotstyle=o, dotsize=0.2, fillcolor=white](3.5,-5.1346526)
\psdots[linecolor=black, fillstyle=solid, dotstyle=o, dotsize=0.2, fillcolor=white](3.5,-5.5346527)
\psdots[linecolor=black, fillstyle=solid, dotstyle=o, dotsize=0.2, fillcolor=white](3.9,-5.5346527)
\psdots[linecolor=black, fillstyle=solid, dotstyle=o, dotsize=0.2, fillcolor=white](3.9,-5.934653)
\psdots[linecolor=black, fillstyle=solid, dotstyle=o, dotsize=0.2, fillcolor=white](4.3,-5.934653)
\psdots[linecolor=black, fillstyle=solid, dotstyle=o, dotsize=0.2, fillcolor=white](4.3,-6.334653)
\psdots[linecolor=black, fillstyle=solid, dotstyle=o, dotsize=0.2, fillcolor=white](4.7,-6.334653)
\psdots[linecolor=black, fillstyle=solid, dotstyle=o, dotsize=0.2, fillcolor=white](4.7,-6.7346525)
\psdots[linecolor=black, fillstyle=solid, dotstyle=o, dotsize=0.2, fillcolor=white](5.1,-6.7346525)
\psdots[linecolor=black, fillstyle=solid, dotstyle=o, dotsize=0.2, fillcolor=white](3.1,-3.1346526)
\psdots[linecolor=black, fillstyle=solid, dotstyle=o, dotsize=0.2, fillcolor=white](3.5,-3.5346527)
\psdots[linecolor=black, fillstyle=solid, dotstyle=o, dotsize=0.2, fillcolor=white](3.9,-3.9346528)
\psdots[linecolor=black, fillstyle=solid, dotstyle=o, dotsize=0.2, fillcolor=white](4.3,-4.334653)
\psdots[linecolor=black, fillstyle=solid, dotstyle=o, dotsize=0.2, fillcolor=white](4.7,-4.7346525)
\psdots[linecolor=black, fillstyle=solid, dotstyle=o, dotsize=0.2, fillcolor=white](5.1,-5.1346526)
\psdots[linecolor=black, fillstyle=solid, dotstyle=o, dotsize=0.2, fillcolor=white](5.5,-5.5346527)
\psdots[linecolor=black, fillstyle=solid, dotstyle=o, dotsize=0.2, fillcolor=white](3.9,-3.1346526)
\psdots[linecolor=black, fillstyle=solid, dotstyle=o, dotsize=0.2, fillcolor=white](4.3,-3.5346527)
\psdots[linecolor=black, fillstyle=solid, dotstyle=o, dotsize=0.2, fillcolor=white](4.7,-3.9346528)
\psdots[linecolor=black, fillstyle=solid, dotstyle=o, dotsize=0.2, fillcolor=white](5.1,-4.334653)
\psdots[linecolor=black, fillstyle=solid, dotstyle=o, dotsize=0.2, fillcolor=white](5.5,-4.7346525)
\psdots[linecolor=black, fillstyle=solid, dotstyle=o, dotsize=0.2, fillcolor=white](1.9,-5.1346526)
\psdots[linecolor=black, fillstyle=solid, dotstyle=o, dotsize=0.2, fillcolor=white](2.3,-5.5346527)
\psdots[linecolor=black, fillstyle=solid, dotstyle=o, dotsize=0.2, fillcolor=white](2.7,-5.934653)
\psdots[linecolor=black, fillstyle=solid, dotstyle=o, dotsize=0.2, fillcolor=white](3.1,-6.334653)
\psdots[linecolor=black, fillstyle=solid, dotstyle=o, dotsize=0.2, fillcolor=white](3.5,-6.7346525)
\psdots[linecolor=black, fillstyle=solid, dotstyle=o, dotsize=0.2, fillcolor=white](1.9,-5.934653)
\psdots[linecolor=black, fillstyle=solid, dotstyle=o, dotsize=0.2, fillcolor=white](2.3,-6.334653)
\psdots[linecolor=black, fillstyle=solid, dotstyle=o, dotsize=0.2, fillcolor=white](2.7,-6.7346525)
\psdots[linecolor=black, fillstyle=solid, dotstyle=o, dotsize=0.2, fillcolor=white](10.7,-3.1346526)
\psdots[linecolor=black, fillstyle=solid, dotstyle=o, dotsize=0.2, fillcolor=white](11.1,-3.5346527)
\psdots[linecolor=black, fillstyle=solid, dotstyle=o, dotsize=0.2, fillcolor=white](11.5,-3.9346528)
\psdots[linecolor=black, fillstyle=solid, dotstyle=o, dotsize=0.2, fillcolor=white](11.9,-4.334653)
\psdots[linecolor=black, fillstyle=solid, dotstyle=o, dotsize=0.2, fillcolor=white](12.3,-4.7346525)
\psdots[linecolor=black, fillstyle=solid, dotstyle=o, dotsize=0.2, fillcolor=white](12.7,-5.1346526)
\psdots[linecolor=black, fillstyle=solid, dotstyle=o, dotsize=0.2, fillcolor=white](13.1,-5.5346527)
\psdots[linecolor=black, fillstyle=solid, dotstyle=o, dotsize=0.2, fillcolor=white](13.5,-5.934653)
\psdots[linecolor=black, fillstyle=solid, dotstyle=o, dotsize=0.2, fillcolor=white](9.9,-6.334653)
\psdots[linecolor=black, fillstyle=solid, dotstyle=o, dotsize=0.2, fillcolor=white](10.3,-6.7346525)
\psdots[linecolor=black, fillstyle=solid, dotstyle=o, dotsize=0.2, fillcolor=white](11.9,-3.1346526)
\psdots[linecolor=black, fillstyle=solid, dotstyle=o, dotsize=0.2, fillcolor=white](12.3,-3.5346527)
\psdots[linecolor=black, fillstyle=solid, dotstyle=o, dotsize=0.2, fillcolor=white](12.7,-3.9346528)
\psdots[linecolor=black, fillstyle=solid, dotstyle=o, dotsize=0.2, fillcolor=white](13.1,-4.334653)
\psdots[linecolor=black, fillstyle=solid, dotstyle=o, dotsize=0.2, fillcolor=white](13.5,-4.7346525)
\psdots[linecolor=black, fillstyle=solid, dotstyle=o, dotsize=0.2, fillcolor=white](9.9,-5.1346526)
\psdots[linecolor=black, fillstyle=solid, dotstyle=o, dotsize=0.2, fillcolor=white](10.3,-5.5346527)
\psdots[linecolor=black, fillstyle=solid, dotstyle=o, dotsize=0.2, fillcolor=white](10.7,-5.934653)
\psdots[linecolor=black, fillstyle=solid, dotstyle=o, dotsize=0.2, fillcolor=white](11.1,-6.334653)
\psdots[linecolor=black, fillstyle=solid, dotstyle=o, dotsize=0.2, fillcolor=white](11.5,-6.7346525)
\psdots[linecolor=black, fillstyle=solid, dotstyle=o, dotsize=0.2, fillcolor=white](13.1,-3.1346526)
\psdots[linecolor=black, fillstyle=solid, dotstyle=o, dotsize=0.2, fillcolor=white](13.5,-3.1346526)
\psdots[linecolor=black, fillstyle=solid, dotstyle=o, dotsize=0.2, fillcolor=white](13.5,-3.5346527)
\psdots[linecolor=black, fillstyle=solid, dotstyle=o, dotsize=0.2, fillcolor=white](9.9,-3.5346527)
\psdots[linecolor=black, fillstyle=solid, dotstyle=o, dotsize=0.2, fillcolor=white](9.9,-3.9346528)
\psdots[linecolor=black, fillstyle=solid, dotstyle=o, dotsize=0.2, fillcolor=white](10.3,-3.9346528)
\psdots[linecolor=black, fillstyle=solid, dotstyle=o, dotsize=0.2, fillcolor=white](10.3,-4.334653)
\psdots[linecolor=black, fillstyle=solid, dotstyle=o, dotsize=0.2, fillcolor=white](10.7,-4.334653)
\psdots[linecolor=black, fillstyle=solid, dotstyle=o, dotsize=0.2, fillcolor=white](10.7,-4.7346525)
\psdots[linecolor=black, fillstyle=solid, dotstyle=o, dotsize=0.2, fillcolor=white](11.1,-4.7346525)
\psdots[linecolor=black, fillstyle=solid, dotstyle=o, dotsize=0.2, fillcolor=white](11.1,-5.1346526)
\psdots[linecolor=black, fillstyle=solid, dotstyle=o, dotsize=0.2, fillcolor=white](11.5,-5.1346526)
\psdots[linecolor=black, fillstyle=solid, dotstyle=o, dotsize=0.2, fillcolor=white](11.5,-5.5346527)
\psdots[linecolor=black, fillstyle=solid, dotstyle=o, dotsize=0.2, fillcolor=white](11.9,-5.5346527)
\psdots[linecolor=black, fillstyle=solid, dotstyle=o, dotsize=0.2, fillcolor=white](11.9,-5.934653)
\psdots[linecolor=black, fillstyle=solid, dotstyle=o, dotsize=0.2, fillcolor=white](12.3,-5.934653)
\psdots[linecolor=black, fillstyle=solid, dotstyle=o, dotsize=0.2, fillcolor=white](12.3,-6.334653)
\psdots[linecolor=black, fillstyle=solid, dotstyle=o, dotsize=0.2, fillcolor=white](12.7,-6.334653)
\psdots[linecolor=black, fillstyle=solid, dotstyle=o, dotsize=0.2, fillcolor=white](12.7,-6.7346525)
\psdots[linecolor=black, fillstyle=solid, dotstyle=o, dotsize=0.2, fillcolor=white](13.1,-6.7346525)
\psdots[linecolor=black, dotsize=0.2](9.9,-0.7346527)
\psdots[linecolor=black, dotsize=0.2](10.3,-0.7346527)
\psdots[linecolor=black, dotsize=0.2](11.1,-0.7346527)
\psdots[linecolor=black, dotsize=0.2](11.9,-0.7346527)
\psdots[linecolor=black, dotsize=0.2](12.3,-0.7346527)
\psdots[linecolor=black, dotsize=0.2](13.1,-0.7346527)
\psdots[linecolor=black, fillstyle=solid, dotstyle=o, dotsize=0.2, fillcolor=white](10.7,-0.7346527)
\psdots[linecolor=black, fillstyle=solid, dotstyle=o, dotsize=0.2, fillcolor=white](11.5,-0.7346527)
\psdots[linecolor=black, fillstyle=solid, dotstyle=o, dotsize=0.2, fillcolor=white](12.7,-0.7346527)
\psdots[linecolor=black, fillstyle=solid, dotstyle=o, dotsize=0.2, fillcolor=white](13.5,-0.7346527)
\psdots[linecolor=black, dotsize=0.2](10.3,-1.1346527)
\psdots[linecolor=black, dotsize=0.2](10.7,-1.1346527)
\psdots[linecolor=black, dotsize=0.2](11.5,-1.1346527)
\psdots[linecolor=black, dotsize=0.2](12.3,-1.1346527)
\psdots[linecolor=black, dotsize=0.2](12.7,-1.1346527)
\psdots[linecolor=black, dotsize=0.2](13.5,-1.1346527)
\psdots[linecolor=black, fillstyle=solid, dotstyle=o, dotsize=0.2, fillcolor=white](11.1,-1.1346527)
\psdots[linecolor=black, fillstyle=solid, dotstyle=o, dotsize=0.2, fillcolor=white](11.9,-1.1346527)
\psdots[linecolor=black, fillstyle=solid, dotstyle=o, dotsize=0.2, fillcolor=white](13.1,-1.1346527)
\psdots[linecolor=black, fillstyle=solid, dotstyle=o, dotsize=0.2, fillcolor=white](9.9,-1.1346527)
\psdots[linecolor=black, dotsize=0.2](10.7,-1.5346527)
\psdots[linecolor=black, dotsize=0.2](11.1,-1.5346527)
\psdots[linecolor=black, dotsize=0.2](11.9,-1.5346527)
\psdots[linecolor=black, dotsize=0.2](12.7,-1.5346527)
\psdots[linecolor=black, dotsize=0.2](13.1,-1.5346527)
\psdots[linecolor=black, dotsize=0.2](9.9,-1.5346527)
\psdots[linecolor=black, fillstyle=solid, dotstyle=o, dotsize=0.2, fillcolor=white](11.5,-1.5346527)
\psdots[linecolor=black, fillstyle=solid, dotstyle=o, dotsize=0.2, fillcolor=white](12.3,-1.5346527)
\psdots[linecolor=black, fillstyle=solid, dotstyle=o, dotsize=0.2, fillcolor=white](13.5,-1.5346527)
\psdots[linecolor=black, fillstyle=solid, dotstyle=o, dotsize=0.2, fillcolor=white](10.3,-1.5346527)
\psdots[linecolor=black, dotsize=0.2](11.1,-1.9346527)
\psdots[linecolor=black, dotsize=0.2](11.5,-1.9346527)
\psdots[linecolor=black, dotsize=0.2](12.3,-1.9346527)
\psdots[linecolor=black, dotsize=0.2](13.1,-1.9346527)
\psdots[linecolor=black, dotsize=0.2](13.5,-1.9346527)
\psdots[linecolor=black, dotsize=0.2](10.3,-1.9346527)
\psdots[linecolor=black, fillstyle=solid, dotstyle=o, dotsize=0.2, fillcolor=white](11.9,-1.9346527)
\psdots[linecolor=black, fillstyle=solid, dotstyle=o, dotsize=0.2, fillcolor=white](12.7,-1.9346527)
\psdots[linecolor=black, fillstyle=solid, dotstyle=o, dotsize=0.2, fillcolor=white](9.9,-1.9346527)
\psdots[linecolor=black, fillstyle=solid, dotstyle=o, dotsize=0.2, fillcolor=white](10.7,-1.9346527)
\psdots[linecolor=black, dotsize=0.2](11.5,-2.3346527)
\psdots[linecolor=black, dotsize=0.2](11.9,-2.3346527)
\psdots[linecolor=black, dotsize=0.2](12.7,-2.3346527)
\psdots[linecolor=black, dotsize=0.2](13.5,-2.3346527)
\psdots[linecolor=black, dotsize=0.2](9.9,-2.3346527)
\psdots[linecolor=black, dotsize=0.2](10.7,-2.3346527)
\psdots[linecolor=black, fillstyle=solid, dotstyle=o, dotsize=0.2, fillcolor=white](12.3,-2.3346527)
\psdots[linecolor=black, fillstyle=solid, dotstyle=o, dotsize=0.2, fillcolor=white](13.1,-2.3346527)
\psdots[linecolor=black, fillstyle=solid, dotstyle=o, dotsize=0.2, fillcolor=white](10.3,-2.3346527)
\psdots[linecolor=black, fillstyle=solid, dotstyle=o, dotsize=0.2, fillcolor=white](11.1,-2.3346527)
\psline[linecolor=black, linewidth=0.04](5.9,4.4653473)(1.5,4.4653473)(1.5,5.2653475)(5.9,5.2653475)(5.9,4.4653473)
\psdots[linecolor=black, fillstyle=solid, dotstyle=o, dotsize=0.2, fillcolor=white](1.9,4.4653473)
\psdots[linecolor=black, fillstyle=solid, dotstyle=o, dotsize=0.2, fillcolor=white](2.3,4.4653473)
\psdots[linecolor=black, fillstyle=solid, dotstyle=o, dotsize=0.2, fillcolor=white](2.7,4.4653473)
\psdots[linecolor=black, fillstyle=solid, dotstyle=o, dotsize=0.2, fillcolor=white](3.1,4.4653473)
\psdots[linecolor=black, fillstyle=solid, dotstyle=o, dotsize=0.2, fillcolor=white](3.5,4.4653473)
\psdots[linecolor=black, fillstyle=solid, dotstyle=o, dotsize=0.2, fillcolor=white](3.9,4.4653473)
\psdots[linecolor=black, fillstyle=solid, dotstyle=o, dotsize=0.2, fillcolor=white](4.3,4.4653473)
\psdots[linecolor=black, fillstyle=solid, dotstyle=o, dotsize=0.2, fillcolor=white](4.7,4.4653473)
\psdots[linecolor=black, fillstyle=solid, dotstyle=o, dotsize=0.2, fillcolor=white](5.1,4.4653473)
\psdots[linecolor=black, fillstyle=solid, dotstyle=o, dotsize=0.2, fillcolor=white](5.5,4.4653473)
\rput[bl](1.74,4.605347){1}
\rput[bl](2.18,4.645347){2}
\rput[bl](2.62,4.625347){3}
\rput[bl](3.02,4.625347){4}
\rput[bl](3.38,4.585347){5}
\rput[bl](3.76,4.605347){6}
\rput[bl](4.22,4.585347){7}
\rput[bl](4.62,4.605347){8}
\rput[bl](4.98,4.605347){9}
\rput[bl](5.26,4.605347){10}
\psline[linecolor=black, linewidth=0.04](13.9,4.4653473)(9.5,4.4653473)(9.5,5.2653475)(13.9,5.2653475)(13.9,4.4653473)
\psdots[linecolor=black, fillstyle=solid, dotstyle=o, dotsize=0.2, fillcolor=white](9.9,4.4653473)
\psdots[linecolor=black, fillstyle=solid, dotstyle=o, dotsize=0.2, fillcolor=white](10.3,4.4653473)
\psdots[linecolor=black, fillstyle=solid, dotstyle=o, dotsize=0.2, fillcolor=white](10.7,4.4653473)
\psdots[linecolor=black, fillstyle=solid, dotstyle=o, dotsize=0.2, fillcolor=white](11.1,4.4653473)
\psdots[linecolor=black, fillstyle=solid, dotstyle=o, dotsize=0.2, fillcolor=white](11.5,4.4653473)
\psdots[linecolor=black, fillstyle=solid, dotstyle=o, dotsize=0.2, fillcolor=white](11.9,4.4653473)
\psdots[linecolor=black, fillstyle=solid, dotstyle=o, dotsize=0.2, fillcolor=white](12.3,4.4653473)
\psdots[linecolor=black, fillstyle=solid, dotstyle=o, dotsize=0.2, fillcolor=white](12.7,4.4653473)
\psdots[linecolor=black, fillstyle=solid, dotstyle=o, dotsize=0.2, fillcolor=white](13.1,4.4653473)
\psdots[linecolor=black, fillstyle=solid, dotstyle=o, dotsize=0.2, fillcolor=white](13.5,4.4653473)
\rput[bl](9.74,4.605347){1}
\rput[bl](10.18,4.645347){2}
\rput[bl](10.62,4.625347){3}
\rput[bl](11.02,4.625347){4}
\rput[bl](11.38,4.585347){5}
\rput[bl](11.76,4.605347){6}
\rput[bl](12.22,4.585347){7}
\rput[bl](12.62,4.605347){8}
\rput[bl](12.98,4.605347){9}
\rput[bl](13.26,4.605347){10}
\psline[linecolor=black, linewidth=0.04](1.1,-2.7346528)(6.3,-2.7346528)(6.3,-2.7346528)
\rput[bl](0.2,1.7653472){\LARGE{$(i)$}}
\rput[bl](0.1,-1.9346527){\LARGE{$(ii)$}}
\rput[bl](0.0,-5.2346525){\LARGE{$(iii)$}}
\rput[bl](14.6,1.7653472){\LARGE{$(iii)$}}
\rput[bl](14.7,-1.7346528){\LARGE{$(iv)$}}
\rput[bl](14.8,-5.2346525){\LARGE{$(v)$}}
\end{pspicture}
}
\end{center}
\caption{All super dominating sets of $C_{10}$.} \label{Super-C10}
\end{figure}
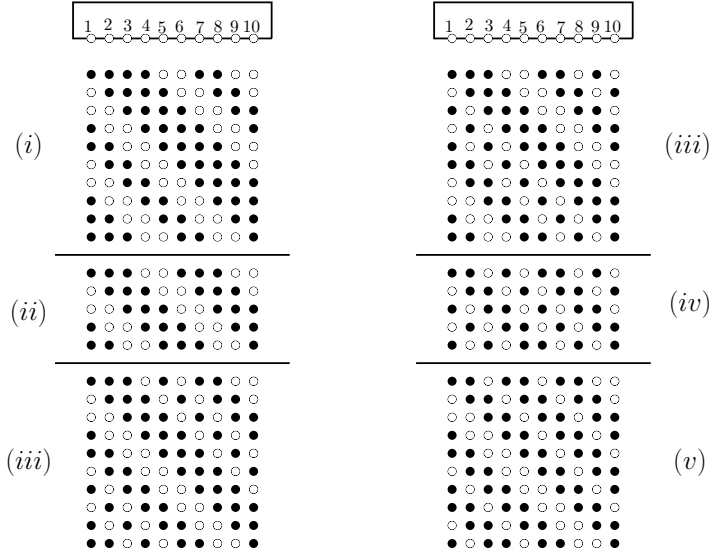
  
%The cases where $p$ is $0$ or $1$ are slightly different but yield 
%the same result and are considered together. 
As mentioned, we will utilize necklaces (see Definition \ref{def:necklace}).
We transform some sequences of vertices into beads according to the following 
list. After that, we consider the resulting cycle as a necklace.
\begin{itemize}
\item {\bf B-bead} (black bead): B,
\item {\bf DB-bead} (double black bead): BB,
\item {\bf P-bead} (pair bead): WBBW,
\item {\bf T-bead} (triple bead): WBBBW,
\item {\bf Q-bead} (quadruple bead): WBBBBW,
\item {\bf DP-bead} (double pair bead): WBBWBBW.
\end{itemize}

We distinguish between the five cases (i)--(v) mentioned above.

\begin{itemize}
\item[(i)]
The  $4$ consecutive vertices form a Q-bead. 
If we convert the cycle together with the super dominating set into a necklace 
consisting of $2q-1+p$ P-beads and a single Q-bead, then by Eq.~\eqref{eq1_necklace} 
we attain $N_{2q+p}(2q-1+p,1)=1$ different necklaces. Now, this 
single necklace provides $n$ different super dominating sets with rotations.

\item[(ii)]
We attain a necklace consisting of $2q-2+p$ 
P-beads and two T-beads. We have $N_{2q+p}(2q-2+p,2)=q$ by Eq.~\eqref{eq2_necklace}.
Furthermore, we can again rotate these $n$ times. However, if $p=0$ then the necklace with $q-1$ P-beads between the two T-beads yields only $n/2$ different super dominating 
sets. Thus, in this case we have $qn-(1-p)n/2$ different super dominating sets.

\item[(iii)]
We attain a necklace consisting of $2q-1+p$ P-beads, one 
T-bead and one B-bead. We have $N_{2q+p+1}(2q-1+p,1,1)=2q+p$. 
Indeed, the number of P-beads $h$ which we can have  between the B-bead and 
the T-bead is $0\leq h\leq 2q-1+p$. Moreover, $n$ rotations give $n$ 
different super dominating sets for each of these necklaces. Thus, we have 
$n(2q+p)$ different super dominating sets in this case.

\item[(iv)]
We attain a necklace consisting of $2q+p$ P-beads and two B-beads. We have 
$N_{2q+p+2}(2q+p,2)=q+1$ by Eq.~\eqref{eq2_necklace}. However, we do not 
consider the case where we have two adjacent B-beads since in that case we 
would actually have $2$ consecutive black vertices and we would be considering 
Case (v). As in Case (ii), if we have $p=0$, then the necklace which has $q$ 
P-beads between the two B-beads give only $n/2$ different super 
dominating sets with rotations. In all other cases, we get $n$ different super 
dominating sets. Thus, we have $qn-(1-p)n/2$ different super dominating sets in 
this Case (iv).

\item[(v)]
In this case, we have two vertices $u_1,u_2\in\overline{S}$ such that they 
are both super dominated by two different vertices. 

Let us first consider the case where $u_1$ and $u_2$ have distance $3$. 
We attain a necklace consisting of $2q+p$ P-beads and 
one DB-bead. By Eq.~\eqref{eq1_necklace}, we have $N_{2q+p}(2q+p,1)=1$.
We can rotate this case $n$ times and thus, we have $n$ different 
super dominating sets.

Let us then assume that $u_1$ and $u_2$ have distance of at least $7$. 
Here the white vertex in the middle corresponds to $u_1$ or 
$u_2$. In this case, we attain a necklace consisting of $2q-3+p$ DB-beads and 
two DP-beads. We have $N_{2q-1+p}(2q-3+p,2)=q-1+p$ by Eq.~\eqref{eq2_necklace}. Notice that when $p=1$ the necklace in which we have $q-1$ 
DB-beads between the two DP-beads gives us only $n/2$ different super 
dominating sets. All other necklaces give $n$ different super dominating sets. 
Thus, we have $n(q-1)+pn/2$ different super dominating sets in this Case (v).
\end{itemize}

By summing all these different cases together, we get
\begin{eqnarray*}
& & N_{sp}(C_n)
\\
& = &  n + qn-(1-p)n/2+ n(2q+p) + qn-(1-p)n/2 + n + n(q-1)+pn/2  
\\
& = & 5qn+(5/2)pn
\\
& = & \frac{5n^2-10n}{8} 
\end{eqnarray*}
super dominating sets, as claimed.

\item[(d)] $ n \equiv 3 \pmod 4 $.

Let $n=4k+3$ with $ k\in \N_0 $. 
It is easy to see that $N_{sp}(C_{3})=3$. Now let $n\geq 7$. 
By Theorem~\ref{thm-2}(b), $\gamma_{sp}(C_{4k+3})=2k+2$. So we need to choose 
$2k+2$ vertices in a proper way to have a super dominating set.  First, we show 
that it is not possible to have $3$ consecutive vertices in a minimum size super 
dominating set $S$. Suppose that we have $3$ consecutive vertices $v$, $v'$, 
$v''$ and contract one of the two corresponding edges ($v v'$ or $v' v''$). 
Then we have a super dominating set of size $2k+1$ in 
$C_{4k+2}$, a contradiction. Second, with the same contraction technique, we 
notice that  it is not possible among $3$ consecutive vertices for the middle one 
to be in $S$ and for the two others to be in $\overline{S}$. Hence, we can only 
have $2$ consecutive vertices in $S$. 

Since $\gamma_{sp}(C_{4k+3})=2k+2$, we have $k+1$ sets of size two with 
consecutive vertices in $S$. Moreover, we have $|\overline{S}|=2k+1$. Thus, we 
have one vertex $v \in \overline{S} $ with $N(v)\subseteq S$ and $2k$ sets of size two with 
consecutive vertices in $\overline{S}$. Assume first that $v=v_1$. Now we can 
construct $n$ different super dominating sets by rotating them around the cycle. 
Thus, $N_{sp}(C_{4k+3})=n$.
\end{itemize}  
Therefore, we have the result. \qed
\end{proof}

%We end this section by the following example:

%\begin{example}\label{ExampleC10}

%Consider $C_{10}$. We  present all super dominating sets of it to show how the 
%proof of Case (3) in Theorem~\ref{Cycle-counting} works.  Consider 
%Figure~\ref{Super-C10}, which shows all cases for super dominating sets of 
%cardinality 6. Note that filled vertices are in the super dominating set and 
%empty vertices are not, and the red cases follow the rows $\frac{k}{2}$ and 
%$\frac{k}{2}+2k$ in Figure~\ref{cycle2mod8} by shifting the indices. Similarly, 
%we can find all super dominating sets for $C_n=C_{4k+2}$ where $n \equiv 6 \pmod 8$.

%\end{example}

%\begin{figure}
%\centering
%\includegraphics[width=0.95\textwidth, height = 12cm]{superdomsetc10.png}
%\caption{ All super dominating sets of $C_{10}$.} 
%\label{Super-C10}
%\end{figure}

\section{Conclusions and future work} \label{Sec:conclusion}

In this paper, we obtained tight results on the super domination number of 
graphs, particularly on the neighbourhood corona product, $r$-gluing and Haj\'{o}s sum 
of two graphs. For each of these, we presented tight lower and upper bounds 
together with constructions attaining these upper bounds. Moreover, we gave the 
exact number of minimum size super dominating sets of some graph classes such as paths and 
cycles. Finally, we provide the following suggestions for future research.

\begin{itemize}
\item[(a)] The formula $\gamma_{sp}(G \star H)= n(\gamma_{sp}(H)+1)$ of
Corollary~\ref{corneighbour} does not cover the case $ H=K_1$.
Thus, it would be interesting to compute $\gamma_{sp}(G \star K_1) $. By 
Eq.~(\ref{eqmn}), $\gamma_{sp}(G \star K_1) \leq \gamma_{sp}(G)+n$ 
is a (trivial) upper bound, but it is tight only for some graphs $G$.
\item[(b)] We found a connection for the super domination number between the 
usual corona product of two graphs~\cite{Kle} and the neighbourhood corona product 
of two graphs. Does a generalization exist for the corona products that 
preserve the result of Corollary~\ref{corneighbour}?
%\item[(c)]
%Count the number of super dominating sets with  cardinality at least  
%$\gamma_{sp}(G)$, where $G$ is an arbitrary  graph.
\item[(c)]
Give general upper and lower bounds for the number of minimum size super dominating sets with help of $\gamma_{sp}(G)$ and  possibly some other graph parameters.
\item[(d)] Count the number of (minimum size) super dominating sets in trees.

\end{itemize}

\section{Acknowledgements} 

The  first author would like to thank the Research Council of Norway and 
Department of Informatics, University of Bergen, for their support. The third 
author was funded by the Academy of Finland, grant 338797.

%\COMG{Maybe use a bib-file.} 

\end{document}